\newlength\figureheight
  \newlength\figurewidth
\newcommand{\s}{^\star}
\newcommand{\bs}{\boldsymbol}
\newcommand{\vb}{\vspace{3.2mm}}
\newtheorem{lemma}{Lemma}
\newtheorem{theorem}{Theorem}
\newtheorem{example}{Example}
\newtheorem{proposition}{Proposition}
\renewcommand{\fnum@figure}[1]{\textbf{\figurename~\thefigure}. }
\renewcommand{\fnum@table}[1]{\textbf{\tablename~\thetable}. }
\begin{document}

\title[Cram\'er-Lundberg model
with fluctuating number of clients]{The Cram\'er-Lundberg model\\
with a fluctuating number of clients}

\author{Peter Braunsteins \& Michel Mandjes}

\begin{abstract}
This paper considers the Cram\'er-Lundberg model, with the additional feature that the number of clients {can} fluctuate over time. 
Clients arrive according to a Poisson process, where the times they spend in the system form a sequence of independent and identically distributed non-negative random variables. While in the system, every client generates claims and pays premiums.
In order to describe the model's rare-event behaviour, we establish a sample-path large-{deviation} principle. 
{This describes the joint rare-event behaviour of the  reserve-level process and the client-population size process. The large-deviation principle  can be used to determine the decay rate of the time-dependent ruin probability as well as the most likely path to ruin.}
{Our results allow us to determine whether the chance of ruin is greater with more or with fewer clients and, more generally, to determine to what extent a large deviation in the reserve-level process can be attributed to an unusual outcome of the client-population size process.}

\vb

\noindent
{\sc Keywords.} Cram\'er-Lundberg $\circ$ large deviations $\circ$ ruin probability $\circ$ exponential tightness

\vb

\noindent
{\sc Affiliations.} The authors  are with the Korteweg-de Vries Institute for Mathematics, University of Amsterdam, Science Park 904, 1098 XH Amsterdam, The Netherlands. PB is also with the Mathematical Institute, P.O. Box 9512,
2300 RA Leiden,
The Netherlands.
MM is also with E{\sc urandom}, Eindhoven University of Technology, Eindhoven, The Netherlands, and Amsterdam Business School, Faculty of Economics and Business, University of Amsterdam, Amsterdam, The Netherlands. Both authors'  research has been funded by the NWO Gravitation project N{\sc etworks}, grant number 024.002.003. 
Email: \url{m.r.h.mandjes@uva.nl}, \url{pbraunsteins@gmail.com}.

\vb

\noindent
{\it Date}: {\today}.

\end{abstract}

\maketitle

\section{Introduction}

The {\it Cram\'er-Lundberg} (CL) model \cite{C1, L1, L2} plays a pivotal role in ruin theory. {It is} a stochastic process that represents the evolution of {an} insurance firm's reserve  level (also referred to as surplus-level process). The
{primary goal} is to evaluate the {{\it ruin probability} for a given initial surplus $u$}, i.e., the probability that the reserve-level process drops below 0. 
In the most basic variant of the CL model, claims are independent and identically distributed (iid)~non-negative random quantities that arrive according to a Poisson process (with rate {$\nu>0$}), while premiums are earned at a deterministic linear rate $r>0$.
For this base model a broad range of results have been obtained, most notably a characterisation of the ruin probability through its Laplace transform. 
In addition, relying on elements from {\it large-deviations theory}, the asymptotics of the ruin probability were identified \textcolor{black}{for large values of the initial surplus $u$}, an important observation in this context being that with overwhelming probability the path to ruin is by approximation linear (under the proviso that the claims are light-tailed). 
For more background on these results, and an account of the area of ruin theory in general, we refer to e.g.\ \cite{AA, MIK, KGS, TEU}.

The CL model described above is admittedly a gross simplification of reality, in that various features that play a role in practice are not incorporated. 
This realization led to a stream of results that in various directions generalize the classical setup. 
Without attempting to provide an exhaustive overview, we {now} briefly mention a few of the main strands of research. 
Arguably the most important extension concerns the time-dependent ruin probability, i.e., the probability that the reserve level becomes negative before a given point in time. 
We refer to \cite[Ch. V]{AA} for an overview of results in this area; notably, under the large-deviations scaling (with light-tailed claims) the most likely path to ruin is still {linear}. 
{Steps have also been taken to generalise the claim arrival process, which is traditionally of compound Poisson type. 
In \cite{G1} a diffusion term is added, and (more generally) in \cite{DebM, KYP} the reserve level evolves as a L\'evy process.}
In e.g.\ \cite{TOR} the arrival process is assumed to be of Hawkes type.
Other extensions {include} variants in which the insurance firm's interest income is incorporated; see for instance \cite{AC, BM}, and the textbook treatment in \cite[Ch.\ VIII]{AA}.
We finally mention the branch of the literature in which the reserve process is modulated by a background process; see e.g.\ the Markov-modulated framework in \cite[Ch. VII]{AA} and the mixing model in \cite{CDMR}.

In the present paper we consider another extension of the CL model, namely a model in which the insurance firm has a stochastically fluctuating number of clients. 
One could view the standard CL model as a setup in which the number of clients is {fixed}, while in practice, so as to properly assess the ruin probability, one should evidently take into account variations in the client population size. 
We model the client-level fluctuations by letting clients arrive according to a Poisson process, where the times they spend in the system (as a client of the insurance firm, that is) form a sequence of iid non-negative random variables; while in the system, each client generates iid claims at Poisson instants, and pays premiums at a rate $r$.

In the {CL model with a fluctuating number of clients}, we wish to assess the {time-dependent} ruin probability, given the insurance firm's initial surplus. We do so in an asymptotic context, corresponding to the (realistic) situation that the insurance firm's client base is consistently large. Concretely, 
we let the (Poissonian) client arrival rate be $n\lambda$ {and \textcolor{black}{the initial surplus be $n u$ for some $u>0$}}, where $n$ is a scaling parameter that we let grow large. In this limiting setting we {derive a sample-path large-deviation principle (LDP).
This sample-path LDP is bivariate, in that it jointly describes the reserve-level process and the client-population-size process. 
We use it not only to evaluate the logarithmic decay rate of the time-dependent ruin probability, but also to investigate two questions about the most likely path to ruin: (i) is the chance of ruin greater when the client population is higher or lower than expected?; (ii) to what extent can a large deviation in the reserve level process be attributed to an unusual outcome of the client population process?
}

At {a} technical level, {the} crucial difference with the conventional CL model, where the number of clients is fixed, is that when we allow the number of clients to fluctuate, the increments of the reserve level process are no longer independent. 
Traditionally, sample-path large deviations mainly focus on settings with independent increments. Results in this area essentially go back to an early paper by Varadhan \cite{VAR}; see also the contributions {in} \cite{BOR65, MOG1, MOG2}. 
{Indeed, the sample path LDP for the standard CL model is implied by the classical result of Mogulskii \cite[Thm.\ 5.1.2]{DZ}.} Models in which there is correlation in the increment process are substantially harder to deal with, but often offer richer behaviour. 
For example, in the CL model with a fluctuating number of clients the most likely path to ruin is no longer linear.  
For work on sample-path large deviations for processes with dependent increments we refer to (the generalised version of) Schilder's theorem for Gaussian processes, which was established in \cite{AZEN, BZ}; see also the textbook treatment in \cite{DS}. 
This type of result has been applied extensively in the operations research domain, addressing various rare-event related problems concerning Gaussian storage systems \cite{ADM,MMNU, MNG, MU, MN}. 
{We also point to} sample-path LDPs for specific queueing models which can be found in e.g.\ \cite{BCL,SW,WIS}.

We prove the LDP for our variant of the CL model by first establishing an LDP that corresponds to a single point in time, then extending this to an LDP for multiple points in time, before finally establishing the full sample-path LDP.
In this approach, the first two steps {rely on a fundamental observation: even though its increments are not independent, it is possible to decompose the process into independent components, thus allowing arguments based on sums of independent random variables to be applied.}
The main technical hurdle lies in the final step: upgrading the finite-dimensional LDP  to a sample-path LDP. This amounts to verifying one of the equivalent exponentially tightness characterisations as provided by \cite[Thm.\ 4.1]{FK}. We point out that since the number of clients fluctuates autonomously (i.e., it is not affected by the reserve-level process), the structure of the LDP resembles the decompositions found in \cite{GANG,LIP}. 

This paper is organised as follows. In Section \ref{Sec:model} we provide a detailed model description of our CL model with a fluctuating client population. 
In Section \ref{Sec:mainr} we present our main results. 
These cover finite-dimensional LDPs as well as the full sample-path LDP. 
In addition, we present results that shed light on the most likely path to ruin, including experimental insight into the most likely cause of ruin. 
Proofs are provided in Section \ref{Sec:proofs}: first we focus on establishing finite-dimensional LDPs, {and then} extend these to the full sample-path LDP by relying on a tightness argument.

\section{Model}\label{Sec:model}
In this section we introduce the CL model with a fluctuating client population. In this model description, we distinguish between the dynamics of the population size, and the dynamics corresponding to each individual client in the system.

\vb

\noindent {\it Client-population-size dynamics.}
Clients arrive to the system according to a Poisson process with rate $n\lambda$. Here $\lambda$ is a positive parameter, and $n$ is a scaling parameter that we let grow large. 
The clients stay in the system for independent and identically distributed (iid) amounts of time, in the sequel referred to as the clients' {\it sojourn times}. {For convenience, in our analysis we let the sojourn times have density $h(\cdot)$, but our arguments hold more generally (in particular allowing for both continuous and discrete sojourn-times distributions).}
\textcolor{black}{In queueing-theoretic terminology, the number of clients simultaneously present follows the dynamics of a so-called M/G/$\infty$ system.}

\noindent
At time $0$, the number of clients already present is $n f_0$ for some $f_0\geqslant 0$.  
These $nf_0$ clients have remaining sojourn times that are iid with density $h^\circ(\cdot)$. In this respect a {natural choice is to let} the remaining sojourn times have the well-known {\it excess lifetime distribution}, i.e., for $t\geqslant 0$,
\[h^\circ(t) = \int_t^\infty h(s)\,{\rm d}s\left/\int_0^\infty sh(s)\,{\rm d}s\right.,\]
where in the denominator we recognize the mean duration of a `fresh' sojourn time; it is easily verified that this density integrates to $1$.

\textcolor{black}{Recall that for the M/G/$\infty$ system in equilibrium, the number of clients simultaneously present has a Poisson distribution with mean
\[ \lambda \int_0^\infty sh(s)\,{\rm d}s.\]
Moreover, their remaining service times are independent and obey the excess lifetime distribution, independently of the number of clients present.}

 Throughout we impose the mild technical assumption that remaining sojourn times have a uniformly bounded density, i.e., that there exists a constant $C<\infty$ such that  $h^\circ(t) \leqslant C$ for all $t \geqslant 0$. Note that this assumption holds if $h^\circ(\cdot)$ is the excess lifetime distribution (as then we have that $h^\circ(t)$ is, for any $t\geqslant 0$, majorised by the  multiplicative inverse of the mean of a `fresh' sojourn time).
 A minor technical remark is that, for convenience, the number $n f_0$ is throughout assumed to be an integer, but in the case it is not integer our analysis can be adapted easily by a straightforward rounding procedure.

\noindent
\textcolor{black}{Let $(T_{i,n})_{i \geqslant 1}$ denote the sequence of iid exponentially distributed random variables describing the clients' arrival times in the $n$-th process and $(\mathcal{N}_{t,n})_{t\geqslant 0}$ the corresponding renewal process (i.e., a Poisson process of rate $n\lambda$).}
\textcolor{black}{Let $(\tau_i)_{i \geqslant 1}$ denote the iid sequence of sojourn times and let $(\tau^\circ_i)_{i \geqslant 1}$ denote the iid sequence of remaining sojourn times. The number of clients present at $t$ is then
\begin{align}\notag
F_n(t) &:= nf_0 - \sum_{i=1}^{nf_0} 1\{\tau^\circ_i \leqslant t \} + \mathcal{N}_{t,n} - \sum_{i=1}^{\mathcal{N}_{t,n}} 1\{T_{i,n} + \tau_{i} \leqslant t\} \\
&= \sum^{nf_0}_{i=1} 1\{\tau^\circ_i >t \} + \sum_{i=1}^{\mathcal{N}_{t,n}} 1\{T_{i,n} + \tau_{i} > t\}.\label{Fndef}
\end{align}}

Notice that $F_n(t)$ consists of both clients who belonged to the initial $nf_0$ clients (and have not left yet by time $t$) and clients who 
arrived in $(0,t]$ (and are still present at time $t$). 
We denote the corresponding normalised process by
\[\bar F_n(t) := \frac{F_n(t)}{n}.\]

\noindent  {\it Client behaviour.}
Now that we have introduced the stochastic mechanism that generates the client-population dynamics, we continue by focusing on the behaviour of each individual client while being in the system. 
During her sojourn time a client pays a constant premium rate of $r>0$ per unit of time. 
Every client generates claims at a Poisson rate $\nu>0$ while in the system. 
The claim sizes form an iid sequence, with the moment generating function (mgf) of an individual claim being denoted by $\beta(\cdot)$.
Throughout we assume that we are in the light-tailed \textcolor{black}{setting}, in that $\beta(\theta)$ is finite for $\theta$ in an open neighborhood of the origin. 
The {\it net aggregate claim process} represents the total claimed amount (by the entire population, that is) decreased by the premiums received by the insurance firm. 
\textcolor{black}{Let $({\mathcal M}^{\circ}_{t,i})_{t \geqslant 0}$ and $({\mathcal M}_{t,i})_{t \geqslant 0}$ denote independent sequences of Poisson processes of rate $\nu$, describing the number of claims corresponding to the initially present and arriving customers respectively, and let $(Z^\circ_{k,i})$ and $(Z_{k,i})$ denote sequences of iid random variables describing the $k$-th claim by the initially present and arriving customers respectively. The net aggregate claim process at time $t\geqslant 0$ (with $G_n(0)=0$) is then
\begin{equation}
\begin{aligned}
G_n(t) :=&\: \sum_{i=1}^{nf_0} \bigg( -r(t \wedge \tau^\circ_i) +\sum_{k=1}^{{\mathcal M}^{\circ}_{t \wedge \tau^\circ_i,i}} Z^\circ_{k,i} \bigg)\: \\&\: +\sum_{i=1}^{\mathcal{N}_{t,n} } \bigg( -r( 0 \vee [(t- T_{i,n}) \wedge \tau_i])+ \sum_{k=1}^{{\mathcal M}_{(t-T_{i,n}) \wedge \tau_i,i}} Z_{k,i} \bigg).\label{Gndef}
\end{aligned}
\end{equation}
We denote the corresponding normalised process by}
\[
\bar G_n(t) :=\frac{G_n(t)}{n}.
\]
Our goal is to produce a probabilistic description of the object $(F_n(\cdot), G_n(\cdot))$ that allows us to identify the logarithmic decay rate (\textcolor{black}{as $n$ grows large}) of the time-dependent ruin probability
\begin{equation}\label{pnut}p_n(u,T) := {\mathbb P}(\exists t\in[0,T]: \bar G_n(t) \geqslant u),\end{equation}
given that $\bar F_n(0)=f_0$ and $\bar G_n(0) = 0.$

\section{Main results}\label{Sec:mainr}

\subsection{Large-deviation principles} 
Our main result is the sample-path LDP of the bivarate process $(F_n(\cdot), G_n(\cdot))$, to be presented in Theorem \ref{thm:main}. We establish this LDP by first proving more basic, finite-dimensional LDPs, which we then upgrade to the full sample-path LDP through a tightness argument. 
Concretely, we first discuss a one-point LDP (pertaining to a single point in time, that is), then extend this to a finite-point LDP (pertaining to finitely many time epochs), and then finally to a sample-path LDP. In this section we state these results, and provide the main ideas behind the proofs (which are given in detail in 
Section \ref{Sec:proofs}).

It is noted that the process $(F_n(\cdot), G_n(\cdot))$ is not necessarily Markovian --- or, more precisely: only when the clients' sojourn times are exponentially distributed, $(F_n(\cdot), G_n(\cdot))$ is a Markov chain. Importantly, however,  we can still use arguments that are based on sums of independent random variables. 
Two crucial observations in this context are:
\begin{itemize}
\item[$\circ$]
The process $(F_n(\cdot), G_n(\cdot))$, \textcolor{black}{as defined via \eqref{Fndef} and \eqref{Gndef}}, can be decomposed as the sum of two {\it independent} components: one related to the contribution of the $nf_0$ clients who were already present at time $0$, which we denote by $(F^-_n(\cdot), G_n^-(\cdot))$, and one related to the \textcolor{black}{${\mathcal N}_{t,n}$} clients who enter in the interval $(0,t]$, which we denote by $(F_n^+(\cdot),G_n^+(\cdot))$. 
{To be precise, $F^-_n(t)$ is the number of clients who were present at time $0$ who are still present at time $t$, and $G_n^-(t)$ is the net aggregate claim volume up to time $t$ which was generated by the clients who were present at time $0$.
The process $(F_n^+(\cdot),G_n^+(\cdot))$ is defined similarly, but now corresponding to clients who were {\it not} present at time $0$. As a consequence, $F_n^+(t)=F_n(t)-F^-_n(t)$ and $G_n^+(t)=G_n(t)-G^-_n(t)$.}
\item[$\circ$] By a direct application of known thinning and superposition properties of Poisson processes, both $(F^-_n(\cdot), G_n^-(\cdot))$ and $(F^+_n(\cdot), G_n^+(\cdot))$ can be interpreted as sums of iid processes, each of them dirstributed as some $(F^-(\cdot),G^-(\cdot))$ and $(F^+(\cdot),G^+(\cdot))$, respectively. 
More precisely,  $(F_n^-(\cdot),G_n^-(\cdot))$ can be represented by the sum of $n f_0$ iid copies of $(F^-(\cdot),G^-(\cdot))$, where each copy corresponds to the contribution of a single client who is present at time $0$. 
Similarly, $(F^+_n(\cdot),G_n^+(\cdot))$ can be seen as the sum of $n$ iid copies of $(F^+(\cdot), G^+(\cdot))$, where each copy corresponds to the contribution of a stream of clients that arrive according to a Poisson process with rate $\lambda$.
\end{itemize}

\subsubsection{One-point LDP} 
We start by deriving a large-deviation principle  for the random vector $(\bar F_n(t), \bar G_n(t))$ for a given time point $t>0$. 
As an immediate consequence of the observations above, we can represent $(\bar F_n(t), \bar G_n(t))$ as a sum of \textcolor{black}{$n$} iid random vectors. We can thus apply Cram\'er's theorem \cite[Section 2.2.2]{DZ},
so as to obtain a large-deviation principle  whose rate function is given by the Legendre transform, \textcolor{black}{for $(f,g)\in{\mathbb R}_+\times {\mathbb R}$,}
\begin{equation}\label{OP:1}
I_{t}(f,g) = \sup_{\omega, \theta\in{\mathbb R}} \left\{ \omega f + \theta g - f_0 \log M_{t}^-(\omega,\theta) - \log M_t^+(\omega, \theta) \right\},
\end{equation}
where $M_t^i(\omega, \theta) = \mathbb{E} \exp(\omega F^i(t)+\theta G^i(t))$ for $i \in \{-,+\}$. \textcolor{black}{Cram\'er's theorem concretely entails that for a set $B\subset {\mathbb R}_+\times {\mathbb R}$ we have that
\begin{align}
    -\inf_{(f,g)\in B^\circ}I_t(f,g)&\leqslant \liminf_{n\to\infty} \frac{1}{n}\log {\mathbb P}((\bar F_n(t),\bar G_n(t))\in B)\notag\\&\leqslant\limsup_{n\to\infty} \frac{1}{n}\log{\mathbb P}((\bar F_n(t),\bar G_n(t))\in B)\leqslant -\inf_{(f,g)\in \bar B}I_t(f,g),\label{LDP1}
\end{align}
where $B^\circ$ is the interior of the set $B$ and $\bar B$ its closure. It provides an informal justification for the frequently used approximation
\[{\mathbb P}((\bar F_n(t),\bar G_n(t))\in B) \approx \exp\left(-n\inf_{(f,g)\in  B}I_t(f,g)\right).\]} 

The next step is to compute the mgf\,s $M^+_t(\omega, \theta)$ and $M^-_t(\omega, \theta)$. 
To this end, observe that the net claim process of an individual client (while in the system) is a L\'evy process \cite{DebM}, viz.\ a compound Poisson process with drift, say $Z(\cdot)$. It is directly verified that  the mgf of $Z(t)$ can be written as $(\varphi(\theta))^t$, where
\begin{equation}\label{OP:2}
\varphi(\theta) = \mathbb{E} \exp ( \theta Z(1)) = \exp (-r \theta + \nu( \beta (\theta ) - 1)).
\end{equation}
To compute $M^-_t(\omega, \theta)$, let $\tau^\circ$ be the random variable corresponding to a typical residual sojourn-time duration of  a client who is present at time $0$.
Conditioning on the time this client leaves, we readily obtain
\begin{equation}\label{OP:3}
M_t^-(\omega, \theta) ={\mathbb E}\, e^{\theta Z(\tau^\circ\wedge t)}e^{\omega 1\{\tau^\circ >t\}}= \int_0^t h^\circ(s) \, (\varphi(\theta))^s \, {\rm d}s+ (\varphi(\theta))^t e^\omega \int_t^\infty h^\circ(s) \, {\rm d}s.
\end{equation}
The next goal is to compute $M^+_t(\omega, \theta)$. To this end, we rely on the property that the number of clients that arrive in the interval $(0,t]$ is Poisson with parameter $\lambda t$. In addition, conditional on the number of arrivals, \textcolor{black}{the arrival times can be seen as order statistics of a sequence of iid uniformly distributed random variables, see for example \cite[p.\ 303]{Ross}.}
We thus find, with $U$ being uniformly distributed on $[0,1]$ and $\tau$ the random variable corresponding to a typical duration of the time a client spends in the system,
\begin{align*}M^+_t(\omega, \theta) &
= \sum_{k=0}^\infty e^{-\lambda t}\frac{(\lambda t)^k}{k!}\left({\mathbb E}\, e^{\theta Z(\tau\wedge t(1-U))}e^{\omega 1\{\tau > t(1-U)\}} \right)^k
\\
&=\sum_{k=0}^\infty e^{-\lambda t}\frac{(\lambda t)^k}{k!}\left(
\int_0^t \frac{1}{t}\left(
\int_0^{t-s}h(r)\, (\varphi(\theta))^{r} \,{\rm d}r + (\varphi(\theta))^{t-s} e^\omega \int_{t-s}^\infty h(r)\,{\rm d}r 
\right){\rm d}s
\right)^k,\end{align*}
which simplifies to
\begin{equation}\label{OP:4}
\exp\left(\lambda\left(\int_0^t \left(
\int_0^{t-s} h(r)\,(\varphi(\theta))^{r} \,{\rm d}r + (\varphi(\theta))^{t-s} e^\omega \int_{t-s}^\infty h(r)\,{\rm d}r 
\right){\rm d}s-1\right)\right).
\end{equation}
Upon combining the above, we have thus established the following result.
\begin{proposition}\label{P1}
The pair $(\bar F_n(t),\bar G_n(t))$ satisfies the LDP with rate $n$ and rate function $I_t(f,g)$ characterised by \eqref{OP:1}--\eqref{OP:4}.
\end{proposition}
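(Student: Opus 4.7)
The plan is to leverage the two structural observations stated just before the proposition --- the independence of the decomposition $(F_n,G_n)=(F_n^-,G_n^-)+(F_n^+,G_n^+)$ and the representation of each piece as a sum of iid contributions from individual clients --- in order to reduce the claim to Cram\'er's theorem. Concretely, let $(F_i^-(t),G_i^-(t))$ for $i=1,\ldots,nf_0$ be iid copies of $(F^-(t),G^-(t))$ and $(F_j^+(t),G_j^+(t))$ for $j=1,\ldots,n$ be iid copies of $(F^+(t),G^+(t))$, jointly independent. Then
\[
(\bar F_n(t),\bar G_n(t)) \;=\; \frac{1}{n}\sum_{i=1}^{nf_0}(F_i^-(t),G_i^-(t))\;+\;\frac{1}{n}\sum_{j=1}^{n}(F_j^+(t),G_j^+(t)).
\]
When $f_0$ is a positive integer one can group the summands into $n$ iid blocks of size $f_0+1$ (each block containing $f_0$ minus-contributions and one plus-contribution) and apply Cram\'er's theorem \cite[Sec.\ 2.2.2]{DZ} in~$\mathbb{R}^2$ directly. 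For general $f_0>0$ the same conclusion follows from the G\"artner--Ellis theorem, after observing that the scaled cumulant generating function
\[
\Lambda(\omega,\theta)\;=\;\lim_{n\to\infty}\tfrac{1}{n}\log\mathbb{E}\exp\bigl(\omega F_n(t)+\theta G_n(t)\bigr)\;=\;f_0\log M_t^-(\omega,\theta)+\log M_t^+(\omega,\theta)
\]
is finite and smooth in an open neighbourhood of the origin (see below). The Legendre transform of $\Lambda$ is precisely~\eqref{OP:1}.

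The step needed to apply G\"artner--Ellis is to verify the regularity of $\Lambda$. The individual net-claim process of a client is a compound Poisson process with drift, so by the light-tailed assumption on $\beta$ the function $\varphi(\theta)=\exp(-r\theta+\nu(\beta(\theta)-1))$ is finite in an open neighbourhood of $\theta=0$; the representations \eqref{OP:3} and \eqref{OP:4} will then show that $M_t^-(\omega,\theta)$ and $M_t^+(\omega,\theta)$ are finite for all $\omega\in\mathbb{R}$ and $\theta$ in that neighbourhood, which yields essential smoothness near the origin. This is enough for both the upper and lower bounds in the LDP.

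It remains to justify the closed forms for the two mgfs. For $M_t^-(\omega,\theta)$: an initial client has residual sojourn time $\tau^\circ$ with density $h^\circ$, contributes net claim volume $Z(\tau^\circ\wedge t)$ up to time $t$, and is still present at~$t$ iff $\tau^\circ>t$. Conditioning on $\tau^\circ$, using the mgf identity $\mathbb{E}\,e^{\theta Z(s)}=(\varphi(\theta))^s$, and splitting the integral at $s=t$ yields exactly \eqref{OP:3}. For $M_t^+(\omega,\theta)$: the number of arrivals in $(0,t]$ is Poisson$(\lambda t)$, and conditionally on this number the arrival epochs are iid uniform on $[0,t]$ (order statistics property, \cite[p.\ 303]{Ross}); so for each arrival at time $tU$ with $U\sim\mathrm{Unif}(0,1)$, the same conditioning on $\tau$ as above applies (with residual time-horizon $t(1-U)$), and an application of the Poisson mgf $\mathbb{E}\,e^{\xi N}=e^{\lambda t(e^\xi-1)}$ collapses the series into the exponential in~\eqref{OP:4}.

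The main technical hurdle is the clean application of the Cram\'er/G\"artner--Ellis framework when $f_0$ is not an integer (and hence the two sums cannot literally be packaged into $n$ iid blocks). The convenient way around this is to prove LDPs separately for the two independent sums --- the first at speed $n$ with rate function $f_0\,\Lambda_-^*(\cdot/f_0)$, obtained by rescaling the natural rate-$nf_0$ Cram\'er LDP --- and then to invoke the fact that the LDP for the sum of independent LDP-satisfying sequences is the inf-convolution of the individual rate functions; this inf-convolution is well known to be the Legendre transform of the sum of log-mgfs, giving~\eqref{OP:1}. All other ingredients are routine given the decomposition established in the excerpt.
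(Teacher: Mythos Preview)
Your argument follows the paper's own route: decompose into the independent minus/plus contributions, recognise each as a sum of iid vectors, invoke Cram\'er, and compute the two mgf\,s exactly as in \eqref{OP:3}--\eqref{OP:4}. The paper simply asserts that $(\bar F_n(t),\bar G_n(t))$ is a sum of $n$ iid random vectors and applies Cram\'er directly, relying on the standing convention that $nf_0$ is integer (with rounding otherwise); you are more explicit about the non-integer-$f_0$ case, which is fine.

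One small caution on your G\"artner--Ellis variant: ``essential smoothness near the origin'' is not the hypothesis of \cite[Thm.\ 2.3.6]{DZ} --- you need essential smoothness (steepness) of $\Lambda$ on the whole interior of its effective domain, and verifying this for general claim-size mgf $\beta$ is not automatic. Your final paragraph already contains the clean fix: prove Cram\'er LDPs separately for the two independent sums (at speeds $nf_0$ and $n$, then rescale) and combine via inf-convolution, which yields \eqref{OP:1} without any smoothness hypothesis. That route is preferable to the G\"artner--Ellis one you sketch first.
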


\subsubsection{Multi-point LDP} 
We proceed by deriving a multi-point LDP, i.e., an LDP for the $2d$-dimensional random vector \[(\bar F_n(t_1), \dots , \bar F_n(t_d), \bar G_n(t_1), \dots , \bar G_n(t_d) )\] where $0\leqslant t_1< t_2< \ldots < t_d$ and $d\in{\mathbb N}$.
\textcolor{black}{This can be seen as the $d$-dimensional counterpart of the LDP above: the 2-dimensional vector $(\bar F_n(t),\bar G_n(t))$ has to be replaced by the $2d$-dimensional vector  $(\bar F_n(t_1), \dots , \bar F_n(t_d), \bar G_n(t_1), \dots , \bar G_n(t_d) )$ in \eqref{LDP1}.}

Mimicking the argumentation used in the case $d=1$, we now apply the $2d$-variate version of Cram\'er's theorem \cite[Section 2.2.2]{DZ} to obtain an LDP with rate function, \textcolor{black}{for $({\bs f},{\bs g})\in {\mathbb R}^d_+\times {\mathbb R}^d$,}
\[I_{\bs t}({\bs f},{\bs g})=\sup_{\bs \theta,\bs \omega\in{\mathbb R}^d} \left(\sum_{j=1}^d \omega_j f_j+\sum_{j=1}^d \theta_j g_j - f_0\,\log M^-_{\bs t}(\bs\omega, \bs\theta) - \log M^+_{\bs t}(\bs\omega, \bs\theta)\right),\]
where 
\begin{equation}\label{eq:Mdef}
M^i_{\bs t}(\bs\omega, \bs\theta) =  {\mathbb E}\exp\left(
\sum_{j=1}^d \omega_j F^i(t_j)+\sum_{j=1}^d \theta_j G^i(t_j)
\right),
\end{equation} for $i\in\{-,+\}$.  
Note that, as before, we split the required mgf into one representing the contribution of the clients present at time $0$ and another corresponding to the contribution of the clients arriving in $(0,t]$, with these two contributions being independent.
We can derive the mgf\,s $M^-_{\bs t}(\bs\omega, \bs\theta)$ and $M^+_{\bs t}(\bs\omega, \bs\theta)$ by following a similar method to the one used in the one-point case; however, due to the non-Markovian nature of the process, this derivation is relatively involved and is therefore postponed to Section \ref{Sec:FDLDP}. 
We thus establish the following result.

\begin{proposition}\label{P2}
The vector  $(\bar F_n(t_1), \dots , \bar F_n(t_d), \bar G_n(t_1), \dots , \bar G_n(t_d) )$ satisfies the LDP with rate $n$ and rate function $I_{\bs t}({\bs f},{\bs g})$, where $M^-_{\bs t}(\bs\omega, \bs\theta)$ and $M^+_{\bs t}(\bs\omega, \bs\theta)$ are given in Lemmas $\ref{L1}$ and $\ref{L2}$, respectively.
\end{proposition}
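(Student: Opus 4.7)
The strategy mirrors that of Proposition \ref{P1}, lifted to $2d$ dimensions. I would again exploit the decomposition $(F_n(\cdot),G_n(\cdot))=(F_n^-(\cdot),G_n^-(\cdot))+(F_n^+(\cdot),G_n^+(\cdot))$ into the independent contributions of the $nf_0$ clients present at time $0$ and the clients arriving during $(0,t_d]$. Evaluated on the grid $t_1<\cdots<t_d$, the first summand is a $2d$-dimensional sum of $nf_0$ iid copies of $(F^-(t_1),\ldots,F^-(t_d),G^-(t_1),\ldots,G^-(t_d))$, while (by Poisson thinning and superposition) the second summand is a sum of $n$ iid copies of the analogous vector built from $(F^+(\cdot),G^+(\cdot))$. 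By independence, the scaled cumulant generating function evaluates, for every $\bs\omega,\bs\theta\in{\mathbb R}^d$, to
\[\frac{1}{n}\log{\mathbb E}\exp\Bigl(\sum_{j=1}^d n\omega_j\bar F_n(t_j)+\sum_{j=1}^d n\theta_j\bar G_n(t_j)\Bigr)=f_0\log M^-_{\bs t}(\bs\omega,\bs\theta)+\log M^+_{\bs t}(\bs\omega,\bs\theta),\]
with $M^i_{\bs t}$ as in \eqref{eq:Mdef}; the right-hand side is free of $n$. The light-tail assumption on $\beta(\cdot)$ yields finiteness on a neighbourhood of the origin, so the $2d$-variate G\"artner-Ellis theorem (equivalently, Cram\'er's theorem applied to each of the iid sums and combined via independence) delivers the LDP with rate $n$ and rate function $I_{\bs t}({\bs f},{\bs g})$ of the stated form.

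What remains is a closed-form evaluation of $M^-_{\bs t}$ and $M^+_{\bs t}$; this is the content of the auxiliary Lemmas \ref{L1} and \ref{L2}. For $M^-_{\bs t}$ the plan is to condition on the residual sojourn time $\tau^\circ$ of the tagged initial client. On the event $\{t_{k-1}<\tau^\circ\leqslant t_k\}$ (with the convention $t_0:=0$, $t_{d+1}:=\infty$) we have $F^-(t_j)=1\{j<k\}$ and $G^-(t_j)=Z(t_j\wedge\tau^\circ)$, where $Z$ denotes the single-client L\'evy claim process. Writing $Z(t_j\wedge\tau^\circ)$ as a telescoping sum of increments over $(t_{\ell-1}\wedge\tau^\circ,t_\ell\wedge\tau^\circ]$ and invoking the independent-increments property together with the identity ${\mathbb E}\,e^{\theta Z(s)}=\varphi(\theta)^s$, the conditional expectation factorises into products of the form $\varphi\bigl(\sum_{j\geqslant\ell}\theta_j\bigr)^{(t_\ell\wedge\tau^\circ)-(t_{\ell-1}\wedge\tau^\circ)}$; summing over the location of $\tau^\circ$ in the grid and integrating against $h^\circ$ then produces the explicit formula. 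The evaluation of $M^+_{\bs t}$ is analogous, with the additional preliminary step of conditioning on the number of arrivals in $(0,t_d]$ (Poisson with mean $\lambda t_d$) and invoking the order-statistics representation to replace the arrival epochs by iid uniform variables on $[0,t_d]$; the sum over the Poisson count then collapses to an exponential, exactly as in \eqref{OP:4}.

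The principal technical obstacle is purely combinatorial-algebraic: one must carefully enumerate the $d+1$ possible positions of a client's exit relative to the grid $t_1,\ldots,t_d$, and organise the resulting piecewise exponential contributions in a form compact enough to be tractable in the subsequent sample-path tightness arguments. Once this bookkeeping is performed in Lemmas \ref{L1} and \ref{L2}, the proposition follows immediately from the G\"artner-Ellis conclusion outlined above.
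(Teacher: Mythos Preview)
Your proposal is correct and matches the paper's approach: the same independent decomposition into initially-present versus arriving clients, the same representation of each part as a sum of iid $2d$-vectors, and the same appeal to the multivariate Cram\'er theorem, with the mgf computations delegated to Lemmas~\ref{L1} and~\ref{L2}. The only organisational variant is that for $M^+_{\bs t}$ you condition on the total number of arrivals in $(0,t_d]$ at once (mirroring \eqref{OP:4}), whereas the paper first factorises $M^+_{\bs t}=\prod_{\ell=1}^d M^+_{{\bs t},\ell}$ over the disjoint arrival sub-intervals $[t_{\ell-1},t_\ell)$ and only then invokes the Poisson/uniform structure within each; both routes yield the same formula, but the paper's per-interval factorisation is chosen to feed directly into the mesh-refinement limit of Lemma~\ref{L4}.
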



\subsubsection{Sample-path LDP}
The next step is to extend the LDP for finitely many points in time to a full sample-path LDP 
{on $D(\mathbb{R}^2,[0,T])$, the space of $\mathbb{R}^2$-valued c\`{a}dl\`{a}g functions endowed with the Skorokhod topology}, with rate function $I_{[0,T]}(f,g)$ defined later in \eqref{eq:IsD}.
{Roughly speaking,} this is done in two steps: 
(i) we derive limiting expressions for $M^-_{\bs t}(\bs\omega, \bs\theta)$ and $M^+_{\bs t}(\bs\omega, \bs\theta)$ as the mesh $0=t_1 < t_2 < \dots < t_d=T$ becomes infinitely fine (done in Section \ref{Sec:SPLDP1}); (ii) we prove that the sequence of processes $(\bar F_n(\cdot), \bar G_n(\cdot))$ is exponentially tight (done in Section \ref{Sec:ExT}). As it turns out, from a computational perspective it is easier to work with a different  expression for the rate function $I_{[0,T]}(f,g)$: as pointed out in Section \ref{Sec:AF} we can decompose
$I_{[0,T]}(f,g)$ into two parts under the proviso that both $f$ and $g$ are absolutely continuous.
On the other hand, when $f$ or $g$ is not absolutely continuous, then we show (also in Section \ref{Sec:ExT}) that $I_{[0,T]}(f,g)=\infty.$
Formally, our LDP result is summarized in the following statement. 

%
%


\begin{theorem}\label{thm:main}
The sequence of processes $(\bar F_n(t), \bar G_n(t))_{t \geqslant 0}$ satisfies the LDP {on $D(\mathbb{R}^2,[0,T])$} with rate $n$ and rate function $I_{[0,T]}(f,g)$ {characterised by \eqref{eq:IsD} and Lemmas \ref{L3} and \ref{L4}}.
\end{theorem}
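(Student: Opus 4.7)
The plan is to lift the finite-dimensional LDP of Proposition \ref{P2} to a full sample-path LDP on $D(\mathbb{R}^2,[0,T])$ by combining the projective-limit construction of Dawson--G\"artner with the exponential tightness criterion of \cite[Thm.\ 4.1]{FK}. Given (i) the multi-point LDP with rate functions $I_{\bs t}$ and (ii) exponential tightness of the sequence $(\bar F_n,\bar G_n)$ in the Skorokhod topology, the classical machinery yields a sample-path LDP with rate function
\[I_{[0,T]}(f,g)=\sup_{0=t_1<\cdots<t_d=T,\,d\in\mathbb{N}}I_{\bs t}\bigl(f(t_1),\ldots,f(t_d),g(t_1),\ldots,g(t_d)\bigr),\]
which is exactly the projective-limit expression \eqref{eq:IsD} referenced in the statement. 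The bulk of the work then lies in (a) passing to the limit in this supremum to identify the rate function with the expressions of Lemmas \ref{L3} and \ref{L4}, and (b) verifying exponential tightness.

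\textbf{Identifying the limiting rate function.} For (a), I would view a mesh-indexed vector of dual variables $(\bs\omega,\bs\theta)$ as a discretisation of two functions $\omega(\cdot),\theta(\cdot)$ and analyse the behaviour of $\log M^\pm_{\bs t}(\bs\omega,\bs\theta)$ as the mesh is refined. The conditioning arguments already used to derive \eqref{OP:3} and \eqref{OP:4}, applied piecewise between consecutive $t_j$'s and using the independence between contributions from initial and arriving clients, turn the sums over mesh points into Riemann approximations of integrals involving $\varphi(\theta(s))$ and the hazard factors $h^\circ, h$; this yields the functional MGF expressions of Lemmas \ref{L3} and \ref{L4}. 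The associated variational problem then splits, after a Legendre-type change of variables, into a contribution controlling $f$ alone and, for absolutely continuous $(f,g)$, a term $\int_0^T K_{f(s)}(g'(s))\,\mathrm{d}s$ representing $g$ conditional on the population path, as outlined in Section \ref{Sec:AF}. Conversely, if $f$ or $g$ is not absolutely continuous, then choosing sufficiently refined meshes and exploiting the strict convexity of $\log\varphi(\theta)$ and $\omega\mapsto\log M^\pm_{\bs t}(\omega,\cdot)$ near the origin allows the supremum in the projective limit to diverge, giving $I_{[0,T]}(f,g)=\infty$.

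\textbf{Exponential tightness: the main obstacle.} The hardest step, (b), is establishing exponential tightness. Here I would exploit the decomposition $(\bar F_n,\bar G_n)=(\bar F_n^-+\bar F_n^+,\,\bar G_n^-+\bar G_n^+)$ into two independent sums of i.i.d.\ processes and treat each piece separately, reducing the problem to verifying a super-exponential modulus-of-continuity estimate of the form
\[\lim_{\delta\downarrow 0}\limsup_{n\to\infty}\frac{1}{n}\log \mathbb{P}\!\left(\sup_{|t-s|\leqslant\delta,\,s,t\in[0,T]}\bigl\|\bigl(\bar F_n(t)-\bar F_n(s),\,\bar G_n(t)-\bar G_n(s)\bigr)\bigr\|>\varepsilon\right)=-\infty\]
for each $\varepsilon>0$. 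Each summand contributing to $(\bar F_n^\pm,\bar G_n^\pm)$ consists of a bounded indicator-type component plus a compound-Poisson claim stream with a light-tailed Poisson number of jumps per unit time, so Chernoff bounds combined with a standard dyadic chaining argument over $O(1/\delta)$ sub-intervals should deliver the estimate. Exponential compactness of the marginals at the endpoints is immediate from Proposition \ref{P1} once one checks that $I_t$ is a good rate function, which in turn follows from the steepness of $\varphi(\theta)$ near the origin provided by the light-tail assumption on the claim mgf $\beta(\theta)$.

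\textbf{Assembling the pieces.} Putting (a) and (b) together, the Feng--Kurtz criterion gives a full LDP on $D(\mathbb{R}^2,[0,T])$ with the projective-limit rate function, and the analysis in Step (a) identifies that rate function with the expression \eqref{eq:IsD} characterised through Lemmas \ref{L3} and \ref{L4}, proving Theorem \ref{thm:main}. I expect that modulus-of-continuity control for $\bar F_n^+$, where arrivals and sojourn completions occur at Poisson rates of order $n$, will require the most care, since one must control the joint fluctuation of the population and claim processes uniformly in the mesh location.
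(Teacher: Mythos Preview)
Your high-level strategy coincides with the paper's: Dawson--G\"artner lifts Proposition~\ref{P2} to a pointwise-topology LDP, Riemann-sum limits of the finite-dimensional log-mgf\,s identify the projective-limit rate function with \eqref{eq:IsD} via Lemmas~\ref{L3}--\ref{L4}, and exponential tightness through \cite[Thm.\ 4.1]{FK} upgrades to the Skorokhod topology; the claim $I_{[0,T]}(f,g)=\infty$ on non-AC paths is also established along the lines you sketch.

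Where you and the paper diverge is in the tactics for exponential tightness. You propose the modulus-of-continuity form together with the $\pm$ (initial/arriving) decomposition and dyadic chaining; the paper instead uses the \emph{conditional-expectation} form of \cite[Thm.\ 4.1]{FK}, constructing a single dominating variable $\gamma_n(\delta,\alpha,T)$ and splitting by \emph{coordinate} ($F$ versus $G$) rather than by $\pm$. The paper's route sidesteps the uniform-in-$t$ supremum by absorbing the filtration dependence into the dominating variable, which is convenient here because the process is non-Markovian. Two technical ingredients you should expect to need in any correct execution of your variant: (i) control of departures of the initial $nf_0$ clients over a sliding window $[t,t+\delta]$ relies on the standing assumption that $h^\circ$ has a uniformly bounded density, so that the window probability is $O(\delta)$ uniformly in $t$; (ii) departures of \emph{arriving} clients are not Poisson in general, and the paper handles this via stochastic domination by the stationary M/G/$\infty$ departure process, which \emph{is} Poisson of rate $n\lambda$. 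Your phrase ``sojourn completions occur at Poisson rates of order $n$'' elides precisely this non-Markovian wrinkle, and a bare Chernoff-plus-chaining argument will not close without some such coupling.
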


\subsection{{Experiments}} Evidently, the primary application of Theorem \ref{thm:main} is to evaluate the decay rate of the time-dependent ruin probability $p_n(u,T)$, as was defined in \eqref{pnut}, in our model with a fluctuating number of clients. In addition, however, it reveals the most likely way in which rare events, such as the insurance firm going bankrupt, occur. 
In this subsection we apply Theorem \ref{thm:main} to explore in detail two areas of interest, both of them related to the most likely path to bankruptcy. 
\begin{itemize}
\item[(1)] What is the most likely path of $(\bar F_n(\cdot), \bar G_n(\cdot))$ to bankruptcy at some time $T$? More specifically, is the insurance firm more likely to go bankrupt when there are more clients than usual or fewer clients than usual? We remark that the answer to this question is not {\it a priori} obvious: more clients means more revenue, but also a higher risk of large claims. This question will be systematically analyzed in Section \ref{sec:bank}.
\item[(2)] What is the primary cause of fluctuations in the reserve level at some time $T$? Specifically, when are these fluctuations primarily due to randomness in the number of clients, and when are they primarily due to randomness in the claims made by these clients? We shed light on this issue in Section \ref{sec:fluc}.
\end{itemize}

As argued below, in the context of both questions, a crucial role is played by the probability that the process $( \bar F_n(\cdot),\bar G_n(\cdot))$ is in the `ruin set' $\mathscr{R}:= [0,\infty)\times B$ at time $t\geqslant 0$ \textcolor{black}{with $B \subset \mathbb{R}$}, i.e., the probability that $(\bar F_n(\cdot),\bar G_n(\cdot))$ belongs to 
\begin{equation}\label{VPA1}
\mathscr{H}_t := \{ (f,g) : (f(t), g(t)) \in \mathscr{R}\}.
\end{equation}
In light of Theorem \ref{thm:main}, in order to find the logarithmic decay rate of this probability we are  to solve the {variational problem}
\begin{equation}\label{VPA2}
\varrho(t):= \inf_{(f,g) \in \mathscr{H}_t} I_{[0,t]}(f,g).
\end{equation}
All numerical results included in this section are obtained using the method that is outlined in {Appendix} \ref{Sec:Comp}.

\subsubsection{{Path} to bankruptcy}\label{sec:bank}
We consider the situation that  $\mathscr{R} = [0,\infty)\times [u,\infty)$, where $u$ corresponds to the initial surplus of the insurance firm. This means that, due to Theorem \ref{thm:main}, the logarithmic decay rate of the time-dependent ruin probability can be found by solving the following optimisation:
\[\lim_{n\to\infty}\frac{1}{n}\log p_n(u,T) = - \inf_{t \in[0,T]} \varrho(t).\]
We start, however, by studying the probability of \emph{eventual} bankruptcy, i.e., bankruptcy over an infinite horizon (in the literature also frequently referred to as the {\it all-time ruin probability}). 
To this end, we consider
\[\lim_{n\to\infty}\frac{1}{n}\log p_n(u,\infty)=-\varrho\s := -\inf_{t \geqslant 0} \varrho(t).
\]
Let $t\s$ denote the corresponding optimising time (so that $\varrho\s = \varrho(t\s)$), and $f\s$, $g\s$ be the corresponding optimising paths (so that $\varrho\s = I_{[0,t\s]}(f\s,g\s)$).
The next proposition reflects the remarkable fact that the probability of {eventual} bankruptcy is independent of fluctuations in the number of clients. An explanation of this fact is given below. Note that this result holds not only when $\mathscr{R}$ is of the form $[0,\infty) \times [u,\infty)$, but more generally when $\mathscr{R} = B \times [0,\infty)$ with $B \subset \mathbb{R}$.

 \begin{proposition}\label{PropTI1}
If $\mathscr{R}=B \times [0,\infty)$ with $B \subset \mathbb{R}$, then $\varrho\s$ is independent of the client-level dynamics $($i.e., $f_0$, $\lambda$, $h^\circ(\cdot)$, and $h(\cdot))$. {In addition, $\varrho\s$ only depends on $r$ and $\nu$ through the ratio ${r}/{\nu}$}. 
\end{proposition}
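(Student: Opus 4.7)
The plan is to combine Theorem~\ref{thm:main} with the decomposition $I_{[0,t]}(f,g)=I_{[0,t]}(f)+I_{[0,t]}(g\,|\,f)$ alluded to just before the theorem, where $I_{[0,t]}(g\,|\,f)=\int_0^t K_{f(s)}(g'(s))\,{\rm d}s$ and $K_x(v):=\sup_\theta(\theta v-x\log\varphi(\theta))$. The crucial structural observation is that in this decomposition the client-level dynamics $(f_0,\lambda,h^\circ,h)$ enter only through the first term, whereas $K_x$ depends solely on $r,\nu,\beta$. Reading the hypothesis as the natural dual to \eqref{VPA1}, i.e.\ as a constraint only on the $g$-coordinate, a path $(f,g)\in\mathscr{H}_t$ is subject only to $g(t)\in B$; this opens the door to killing the dependence on the client dynamics.

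The first step I would carry out is the time-warp $\tau(s):=\int_0^s f(r)\,{\rm d}r$, setting $T':=\tau(t)$ and $\tilde g(\tau):=g(s(\tau))$. The Legendre representation immediately yields the homogeneity $K_f(v)=f\,K_1(v/f)$, so
\[
\int_0^t K_{f(s)}(g'(s))\,{\rm d}s \;=\; \int_0^{T'} K_1\bigl(\tilde g'(\tau)\bigr)\,{\rm d}\tau,
\]
and the bankruptcy constraint becomes $\tilde g(T')=g(t)\in B$. For the lower bound on $\varrho^\star$ I would drop $I_{[0,t]}(f)\geqslant 0$ and infimize the right-hand side over all $T'>0$ and absolutely continuous $\tilde g$ with $\tilde g(0)=0$ and $\tilde g(T')\in B$; the resulting value depends only on $(r,\nu,\beta,B)$. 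For the matching upper bound I would take $f$ to be the M/G/$\infty$ fluid limit --- a zero of $I_{[0,t]}(f)$ --- and tune $t$ so that $T'=\int_0^t f(s)\,{\rm d}s$ matches any prescribed positive value; pulling back a near-optimal $\tilde g$ through the change of variables produces an admissible $(f,g)\in\mathscr{H}_t$ whose rate is within $\varepsilon$ of the lower bound. This proves the first assertion.

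A second rescaling $\sigma=\nu\tau$ with $\tilde h(\sigma):=\tilde g(\sigma/\nu)$ then handles the $r/\nu$ claim. A direct computation using $\log\varphi(\theta)=-r\theta+\nu(\beta(\theta)-1)$ gives
\[
\tfrac{1}{\nu}\,K_1(\nu v)\;=\;\sup_\theta\bigl(\theta(v+r/\nu)-(\beta(\theta)-1)\bigr),
\]
so after the substitution the integrand is a functional of $r/\nu$ and $\beta$ only. Since $\nu T'$ still sweeps $(0,\infty)$ and the constraint $\tilde h(\nu T')=\tilde g(T')\in B$ is unchanged, the infimum is invariant under $(r,\nu)\mapsto(cr,c\nu)$, which gives the second assertion. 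The main obstacle will be the upper bound, specifically verifying that a zero-cost choice of $f$ really can realise (or $\varepsilon$-approximate) any prescribed $T'$ while producing, via pull-back, an absolutely continuous $g$ in the effective domain of $I_{[0,t]}$ that satisfies $g(t)\in B$ --- a point that requires care in degenerate corners such as $f_0=0$ or when $B$ is thin near its infimum.
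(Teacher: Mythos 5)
Your proposal is correct and rests on the same time-contraction/dilation idea the paper uses, but you make it precise at the rate-function level, whereas the paper argues informally at the process level. The paper's discussion after the proposition compares $G_{n,1}(\cdot)$ under client path $f(\cdot)$ with $G_{n,2}(\cdot)$ under $f(\cdot)/2$ and notes $G_{n,1}(t)\stackrel{\rm d}{=}G_{n,2}(2t)$, so that a time-warp of the client path leaves the probability of \emph{eventual} ruin unchanged; as the paper remarks, this process-level argument even upgrades the conclusion from the decay rate $\varrho\s$ to the actual all-time ruin probability. You instead exploit the additive decomposition $I_{[0,t]}(f,g)=I_{[0,t]}(f)+I_{[0,t]}(g\,|\,f)$ and the homogeneity $K_f(v)=fK_1(v/f)$, which is exactly the Legendre-transform shadow of the paper's time change $\tau(s)=\int_0^s f(r)\,{\rm d}r$; dropping $I_{[0,t]}(f)\geqslant 0$ gives the lower bound and choosing $f=\bar f$ gives the matching upper bound. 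Your second rescaling $\sigma=\nu\tau$ supplies a clean derivation of the $r/\nu$ invariance that the paper asserts without detail. The degenerate corners you flag are in fact harmless: since $\lambda>0$, the fluid limit $\bar f$ is strictly positive for $s>0$, so $\tau(t)=\int_0^t\bar f(s)\,{\rm d}s$ is continuous, strictly increasing, and unbounded, and every $T'>0$ is realisable; and on any interval where $f\equiv 0$ the conditional kernel forces $g'\equiv 0$, so the time-warp is still well-defined. You also correctly resolved an apparent transposition of the two factors in the proposition's statement of $\mathscr{R}$, reading it as a constraint only on the $g$-coordinate, consistent with \eqref{VPA1} and with the fact that the argument breaks down if one additionally constrains $f$.
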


To understand the result stated in Proposition \ref{PropTI1}, it is instructive to compare the evolution of the net aggregate claim process $G_{n,1}(\cdot)$ when the client-population-size path is known to be $F_{n,1}(\cdot) = f(\cdot)$, to the evolution of the net aggregate claim process $G _{n,2}(\cdot)$ when the client-population-size path is (say) halved (i.e., it becomes $F _{n,2}(\cdot)=f(\cdot)/2$).
Because clients generate claims independently according to a Poisson process $\nu$ and generate capital at a constant rate $r$, we thus have \[G_{n,1}(t) \stackrel{{\rm d}}{=} G _{n,2}(2t)\] for all $t \geqslant 0$.
This means that an increase in the number of clients speeds up the evolution of the net aggregate claims  (which can be interpreted as time contraction), whereas a decrease in the number of clients slows down the evolution of the net aggregate claims (interpreted as time dilation). This local `compressing' or `stretching'  of time evidently has no impact on the probability of eventual bankruptcy. 
Thus, Proposition \ref{PropTI1} reflects the fact that the probability of \emph{eventual} bankruptcy is independent of any contraction/dilation of time.
{It is noted that the above arguments extend beyond our large deviation context, and therefore imply a more general property of the CL model with fluctuating client population. Indeed, in Proposition \ref{PropTI1} the decay rate $\varrho\s$ can be replaced by `the {all-time ruin probability}', again relying on the elementary time-contraction/time-dilation argumentation provided above.}


Where Proposition \ref{PropTI1} concerns  the all-time ruin probability, in applications  one is, for obvious reasons, typically interested in the  time-dependent ruin probability, i.e., the probability of the insurance firm being bankrupt {by a given time $T>0$}. 
Importantly, in this case fluctuations in the number of clients {\it do} play an important role in determining the probability of bankruptcy.
As we will show now, however, we can use the ideas that underlie Proposition~\ref{PropTI1}  to {identify some structural properties corresponding to} this finite-horizon context, too.
To this end, let $(f^{(\star,T)}, g^{(\star,T)})$ be the most likely path in $\mathscr{H}_T$, so that $\varrho(T)=I_{[0,T]}( f^{(\star,T)},g^{(\star,T)})$. 
In addition, let $(\bar f, \bar g)$ 
satisfy
$I_{[0,\infty)}(\bar f, \bar g)=0$, so that $(\bar f, \bar g)$ can be interpreted as the fluid limit of $(\bar F_n(\cdot),\bar G_n(\cdot))$.

{First consider the case that the horizon $T$ equals the most likely time $t\s$ of  eventual bankruptcy. In view of the argumentation underlying Proposition \ref{PropTI1}, one anticipates that the client population evolves (most likely) along its fluid-limit path:
\begin{itemize}
    \item[\emph{(i)}] $f\s(t):=f^{(\star,t\s)}(t)=\bar f(t)$ for all $t \in [0,t\s]$.
\end{itemize}
Next suppose $T<t\s$. In this case the process $\bar G_n(\cdot)$ must enter the rectangular set $\mathscr{R}$ faster than it would do in the infinite-horizon case. 
In the most likely path, one thus anticipates that the number of clients is higher than expected in order to speed up the evolution of $\bar G_n(\cdot)$.
This reasoning leads to
\begin{itemize}
    \item[\emph{(ii)}] if $T < t\s$ then $f^{(\star,T)}(t) > f\s(t)=\bar f(t)$ for all $t \in [0,T]$.
\end{itemize}
Similarly, if $T>t\s$, then $\bar G_n(\cdot)$ must enter $\mathscr{R}$ more slowly than it  would  optimally do. In the most likely path, one anticipates the number of clients to be lower than expected in order to slow down the evolution of $\bar G_n(\cdot)$, i.e.,
\begin{itemize}
\item[\emph{(iii)}] if $T > t\s$ then $f^{(\star,T)}(t)<f\s(t)=\bar f(t)$ for all $t \in [0, t\s]$.
\end{itemize}
}
 

\begin{figure}
\includegraphics[width=5.4cm]{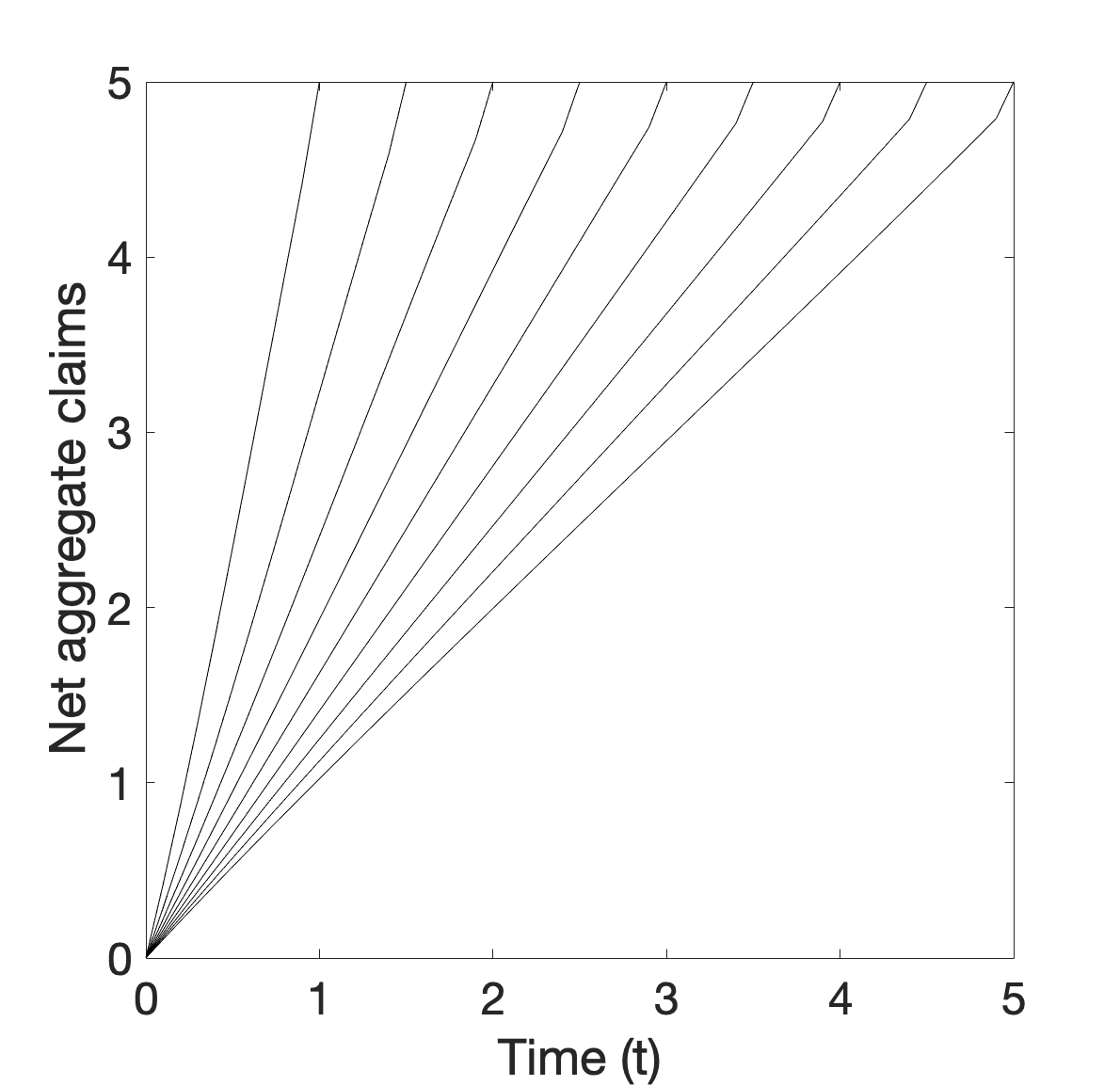}
\includegraphics[width=5.4cm]{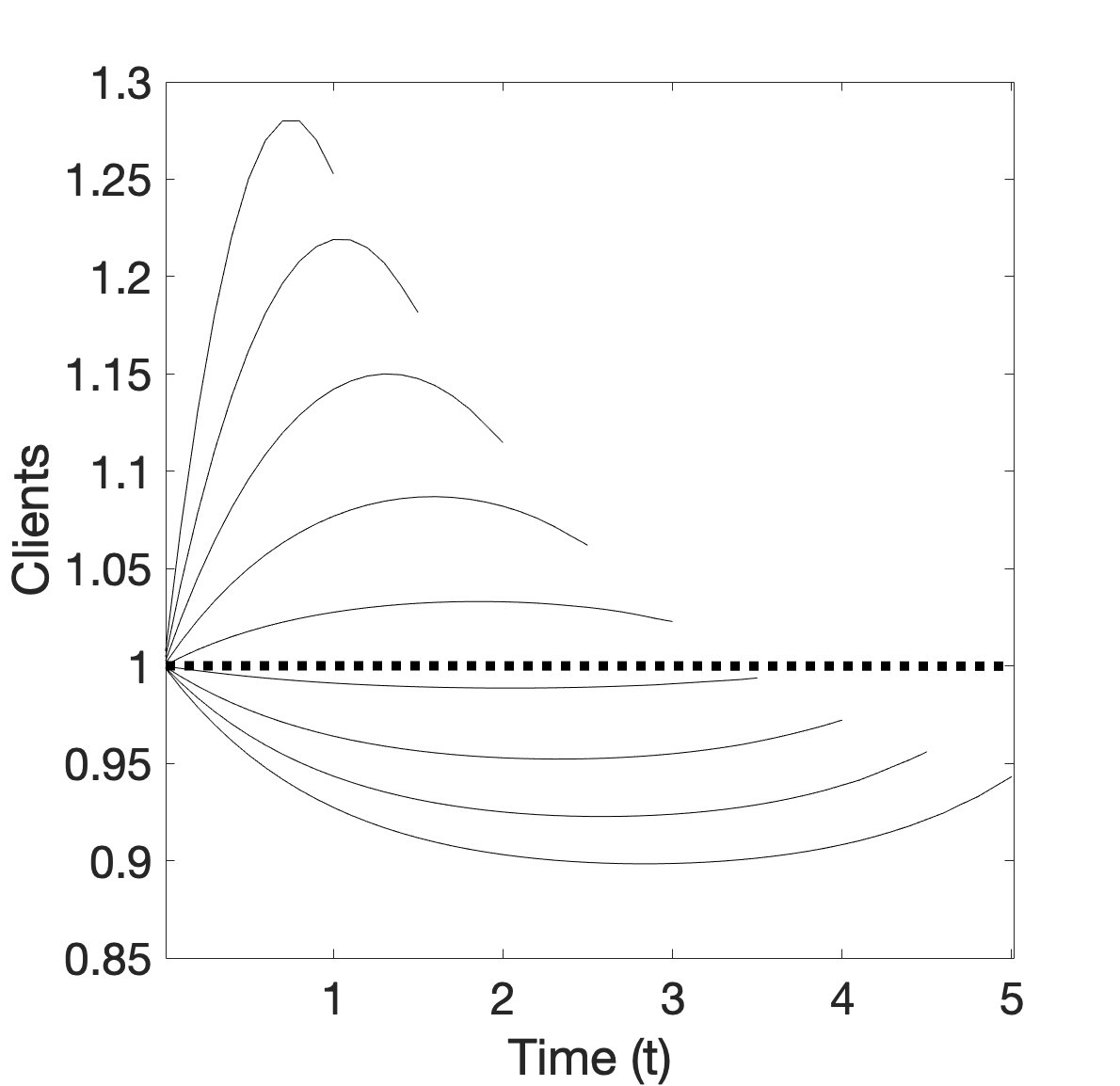}
\includegraphics[width=5.4cm]{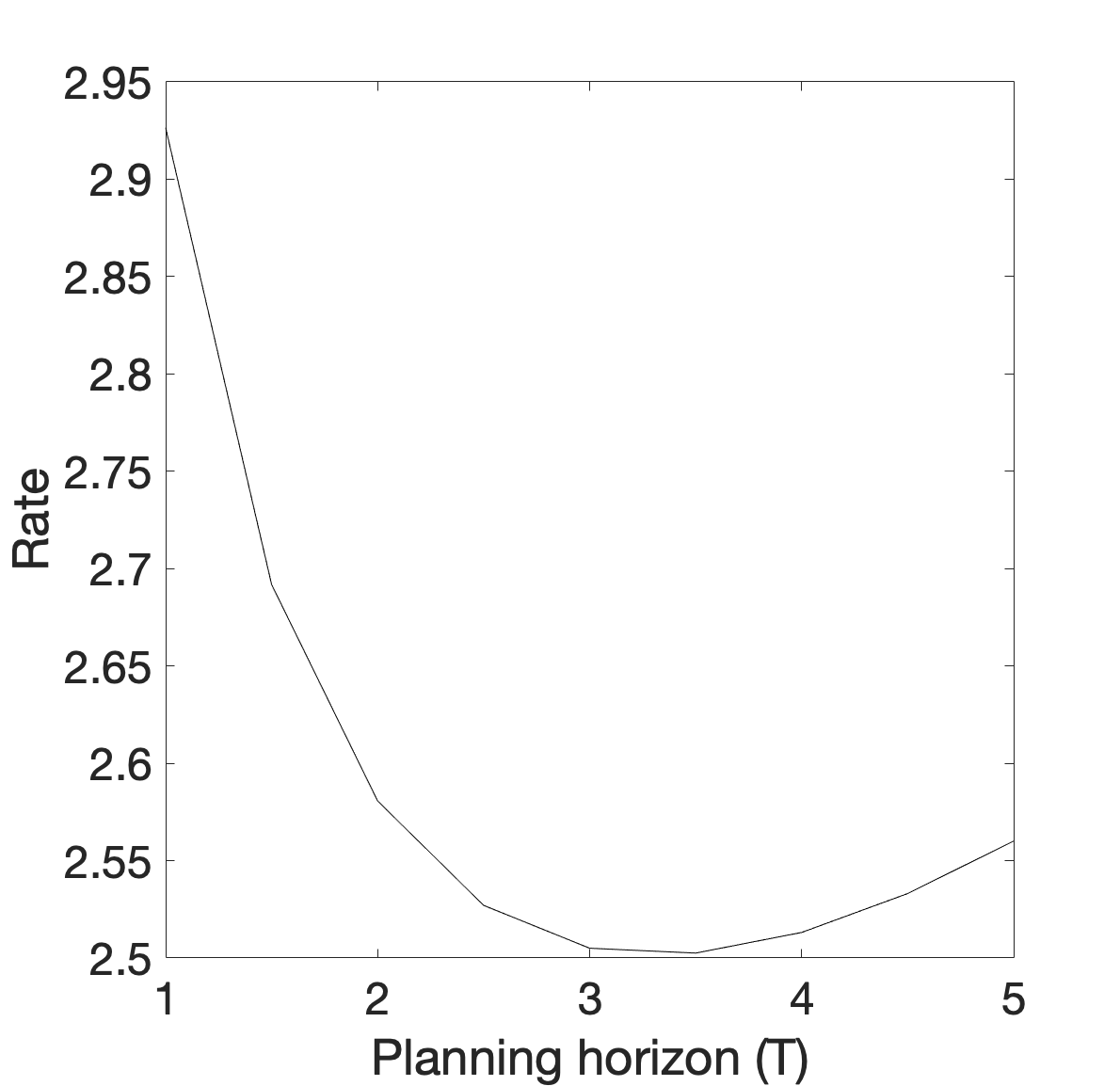}
\includegraphics[width=5.4cm]{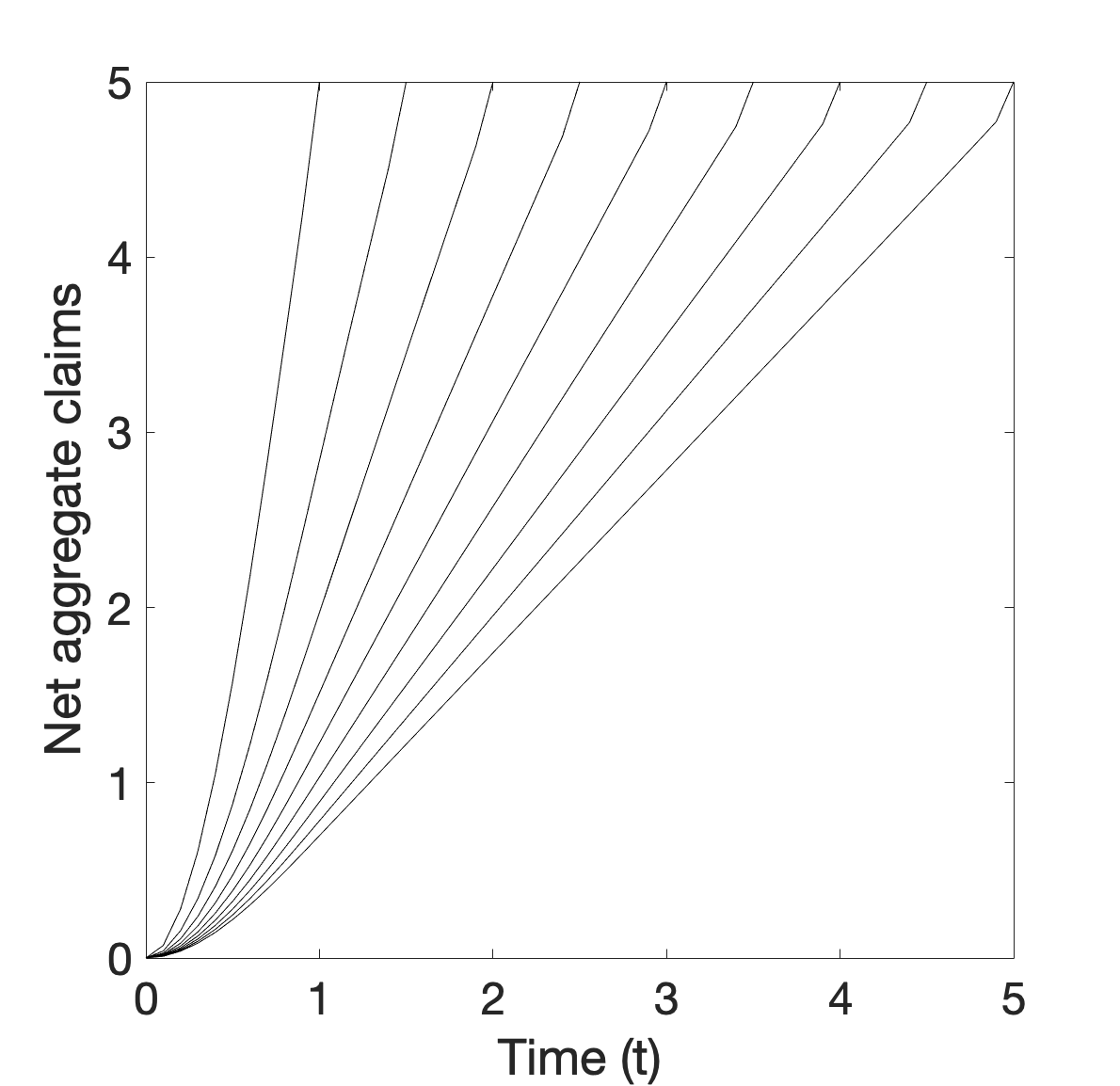}
\includegraphics[width=5.4cm]{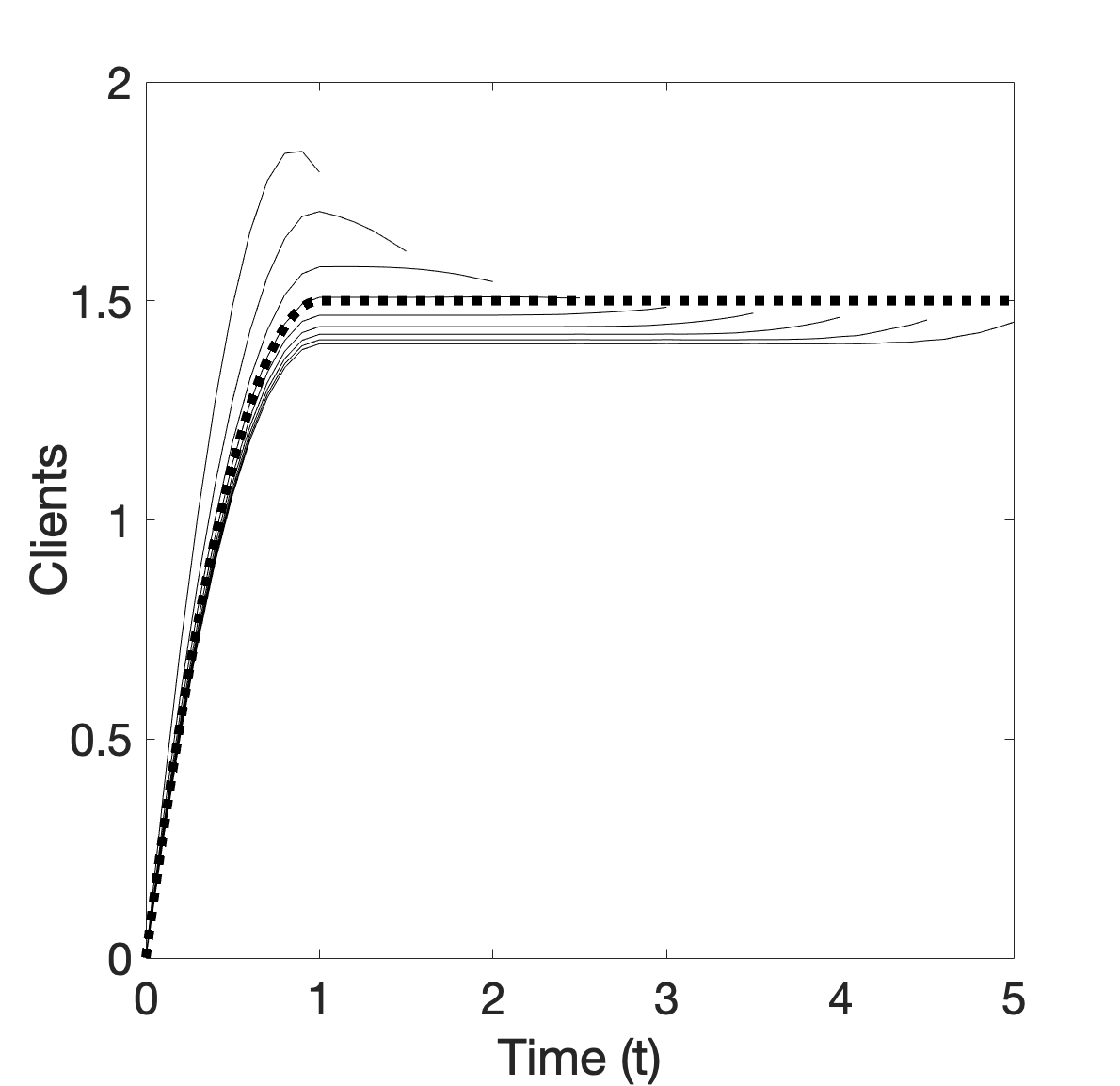}
\includegraphics[width=5.4cm]{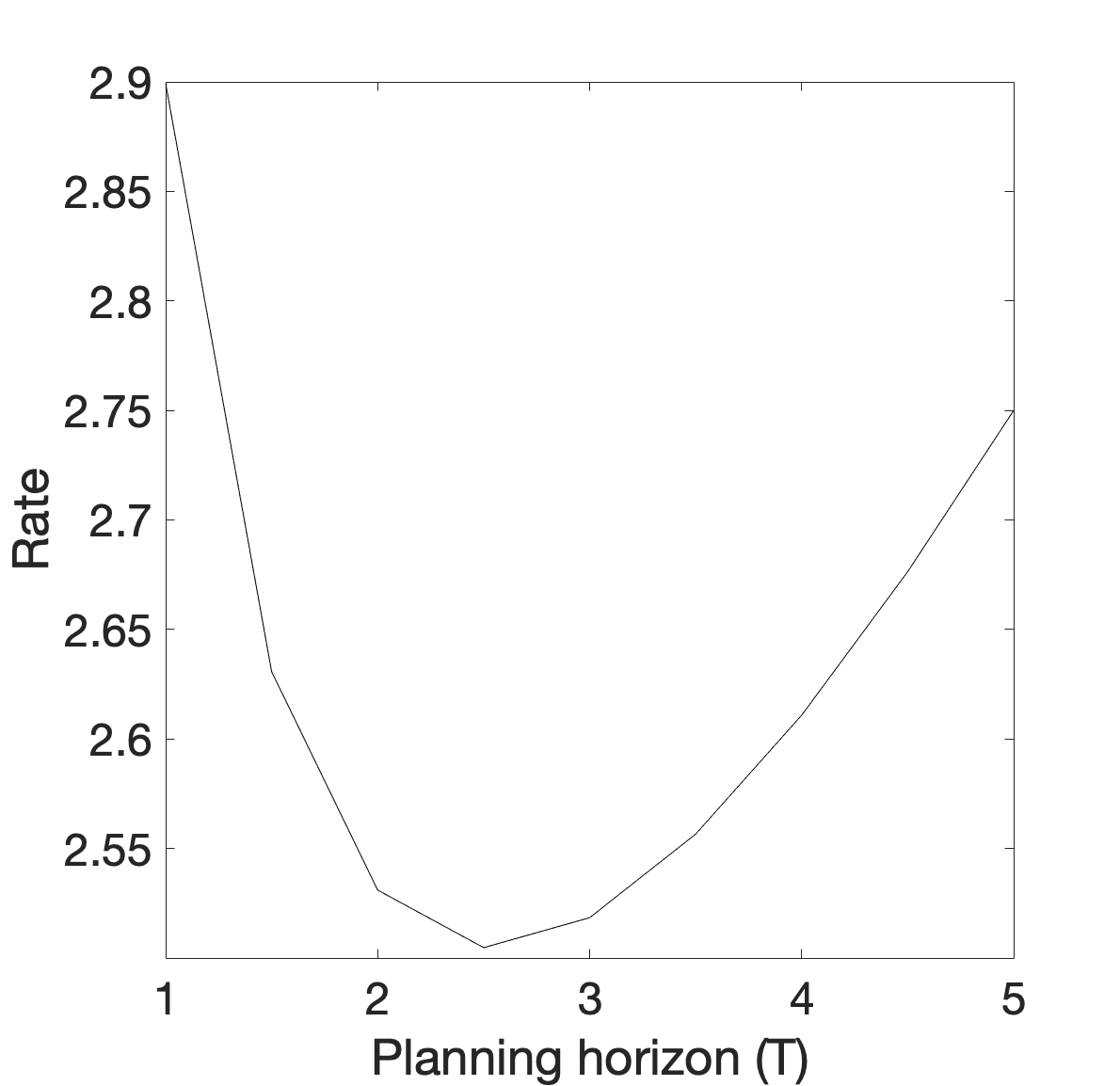}
\caption{\label{Fig:QPs} Most likely paths to bankruptcy for various values of the time horizon $T$. Left panels: most likely net aggregate claim path $g^{(\star,T)}(\cdot)$, middle panels: most likely client-population-size path $f^{(\star,T)}(\cdot)$), right panels: the corresponding decay rate $I_{[0,T]}(g^{(\star,T)},f^{(\star,T)})$. The top and bottom panels correspond to different parameter values, which are given in the text. The dashed curve in the middle panels is $\bar f(\cdot)$.}
\end{figure}

\begin{example} We further study the properties {\it (i)--(iii)}
by means of two numerical \textcolor{black}{experiments} that are pictorially illustrated in Figure~\ref{Fig:QPs}. In both \textcolor{black}{experiments} the net aggregate claim process is characterised by $\nu=3$, $r=3$, with the claim sizes being  exponentially distributed with mean~$\frac{2}{3}$. The insurance firm initially has five units of capital (i.e., $u=5$), and we consider time horizons $T\in\{1,1.5, \dots, 5\}$. 
In the top row of Figure \ref{Fig:QPs} we let $f_0=1$, $\lambda=1$, and the sojourn-time distribution be exponential with mean~$1$ (with the residual sojourn times of the clients present at time $0$ being exponential with mean 1 as well). Note that this means $\bar f(t)=1$ (dashed curve) for all $t\geqslant 0$; informally, the population-size process starts in equilibrium. In the bottom row of Figure \ref{Fig:QPs} we let $f_0=0$, $\lambda=3$, and the sojourn-time distribution be uniform on $[0,1]$ (with $h^\circ(\cdot)$ being the corresponding residual distribution).

Observe that the time horizon associated with the minimal decay rate (i.e., the most likely timescale of ruin in the infinite-horizon case) is $T=3.5 \approx t\s$ for the parameter values in the top row, and $T=2.5  \approx t\s$ for the parameter values in the bottom row. In the right panels of Figure \ref{Fig:QPs} we see that, in line with Proposition \ref{PropTI1}, the rates associated with these optimal time horizons are equal (with $\varrho\s \approx 2.5$). 
In addition, in the center column of Figure \ref{Fig:QPs} we see that, corroborating the properties \emph{(i)}--\emph{(iii)} above, the conditioned path of the clients $f^{(\star,T)}(\cdot)$ is larger than $\bar f(\cdot)$ (depicted by the dashed curve) when $T<t\s$, smaller  than $\bar f(\cdot)$ when $T>t\s$, and equal to $\bar f(\cdot)$ when $T=t\s$.
\end{example}

 \subsubsection{What is the primary cause of fluctuations in capital: clients or claims?}\label{sec:fluc}
 
We suppose that {a} {\it net profit condition} is in place, i.e., we are in the situation  that  $r>{\bar m} \nu$, where ${\bar m}$ is the expected value of the claim size. This condition is natural as it entails that, on average, each client generates a positive return for the insurance company. 
{Our objective is to understand the most likely cause of unusual
values of the net aggregate claim process.
Evidently this is connected to ruin, as ruin occurs when the net aggregate claim process is unusually large.
However, for ease of exposition we start by considering the case that the net aggregate claim process attains an unusually {\it small} value $a$ at time $T$ (corresponding to an unusually large value of the surplus process).}


Suppose $a<\bar g(T)$, where we recall that $\bar g(\cdot)$ is the fluid limit corresponding to the net aggregate claim process $G_n(\cdot)$.
One could distinguish between two possible causes for a large surplus to happen. Contribution~(1) reflects  the event that the number of clients that the insurance company attracts is larger than one would expect; due to the net profit condition this scenario corresponds  to a higher surplus.
Contribution~(2) reflects the event that the {client}-population size attains its expected value but the amount of money claimed by the clients present is lower than expected. Our objective is to quantify the  Contributions (1) and (2). To determine the proportion of the additional capital $\bar g(T)-a$ that can be attributed to additional clients (i.e., Contribution 1) we introduce the performance metric
\begin{equation}\label{CC1}
E_1({a},T) := \frac{(r-{\bar m} \nu)\int_0^T [f^{(\star,T)}(t)-\bar f(t)]\, {\rm d}t}{\bar g(T)-a},
\end{equation}
with, as before, $\bar f(\cdot)$ denoting the fluid limit of the process $F_n(\cdot)$.
Observe that the numerator of \eqref{CC1} can be interpreted as the additional clients $f^{(\star,T)}(t)-\bar f(t)$ in the most likely path multiplied by the {expected net rate} $r-{\bar m} \nu>0$ that these clients generate capital, integrated over time. We divide by the total additional capital {$\bar g(T)-a$} to obtain a proportion. 
What remains can be attributed to clients generating fewer claims than expected (i.e., Contribution 2),
\[
E_2(a,T) := 1-E_1(a,T).
\]
In this way we have separated the effect due to the fluctuations in the number of clients on one hand, and the effect due to the fluctuations in the amount of money claimed by the clients on the other hand.

{Under the net profit condition, when $a < \bar g(T)$ we expect that $E_1(a,T) > 0$ (and hence $E_1(a,T) \in [0,1]$). This is due to the time contraction/dilation arguments in Section \ref{sec:bank}.
In particular, if $a < \bar g(T)$ then there exists $t'>0$ such that $a=\bar g(T+t')$, so that, in order to move toward the optimal time scale, time should contract and hence we expect that $f^{(\star,T)}(t)>\bar f(t)$ for all $t \in [0,T]$. The same reasoning holds when $a \in [\bar g(T),0]$, although now with $t'<0$ and time dilation; however it breaks down when $a>0$, and in this case we may have $E_1(a,T)<0$.}

While $E_1(a,T)$ and $E_2(a,T)$ can be computed numerically, in general, it is challenging to express them analytically.
However, from Theorem \ref{thm:main} and elementary (but lengthy) calculations we can derive an expression \textcolor{black}{as $a-\bar g(T) \to 0$.} These calculations, sketched in Appendix \ref{app1}, 
involve equating the reward (gain in capital) per unit cost (increase in the rate function) for increasing the number of clients that arrive at any time $t$, and decreasing the {value} of claims generated by the clients. 
In particular, when the sojourn-time distribution is exponential with rate $\mu$, we obtain
\begin{equation}\label{ECT}
\lim_{a \to \bar g(T)} E_1(a,T) = \frac{\int_0^T (\lambda + \bar f(t) \mu) \left[ \frac{r-\nu \beta'(0)}{\mu} (1-e^{-\mu(T-t)}) \right]^2{\rm d}t }{\beta''(0) \nu \int^T_0 \bar f(t)\, {\rm d}t+\int_0^T (\lambda + \bar f(t) \mu) \left[ \frac{r-\nu \beta'(0)}{\mu} (1-e^{-\mu(T-t)}) \right]^2{\rm d}t}.
\end{equation}
The individual expressions appearing in the right-hand side of \eqref{ECT} have the following interpretations. In the first place, $(\lambda + \bar f(t) \mu)\,{\rm d}t$ is proportional to the variance of the difference in the number of clients at $t$ and $t+{\rm d}t$, respectively, when the number of clients at time $t$ is close to the fluid limit $\bar f(t)$. Secondly,
\[\frac{r-\nu \beta'(0)}{\mu} (1-e^{-\mu(T-t)})\] 
is the expected capital that is earned from a single client that arrives at time $t$. Thirdly, \[\beta''(0) \nu \int^T_0 \bar f(t) \,{\rm d}t\] is proportional to the variance in the total value of claims when the number of clients follows its fluid limit $\bar f(\cdot)$.
In view of the above, $\lim_{a \to \bar g(T)} E_1(a,T)$ has the appealing interpretation of a ratio of variances. 
A similar  expression with the same interpretation can be obtained when considering the case with general  sojourn times. {This expression, being considerably more involved, is left out.} 

\begin{figure}
\includegraphics[width=13.5cm]{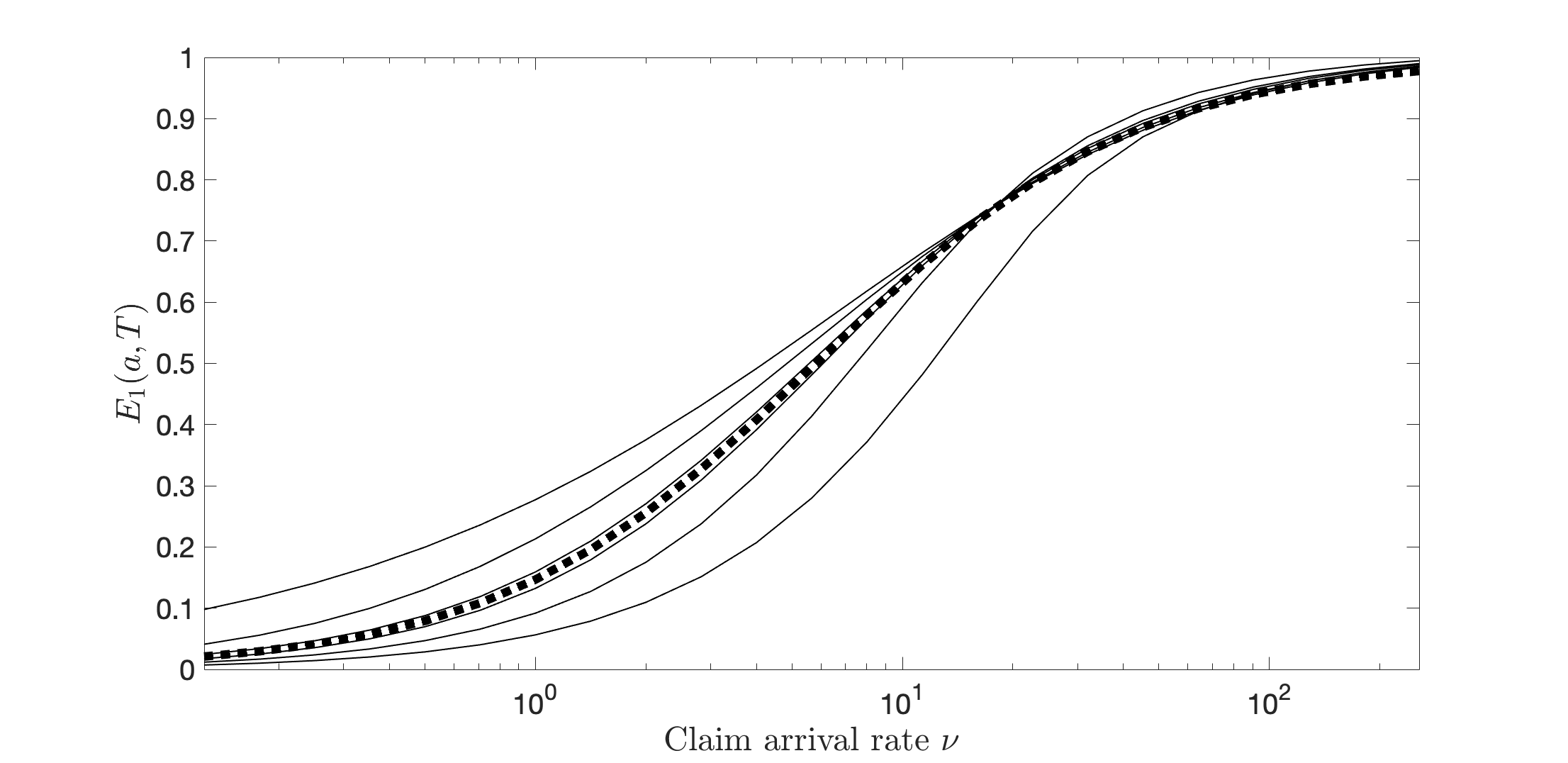}
\caption{\label{CPE}The proportional effect of fluctuations in the number of clients: $E_1(a,T)$ as a function of the claim arrival rate $\nu$.}
\end{figure}

\begin{example} We illustrate the concepts introduced above by means of a numerical example. We let $\lambda=1$, $f_0=1$, $T=1$, and suppose clients leave the company at rate $\mu=1$  (where it is noted that this implies that $\bar f(t)=1$ for all $t\in[0,T]$). Regarding the claim arrival process, we let $r=2$, and suppose that the claim sizes are exponentially distributed with a mean ${\bar m}$ such that
$\nu {\bar m}=1.$
{Observe that $\bar g(T)=(\bar m \nu-r)\int^T_0 \bar f(t) {\rm d}t =-1$.}
In Figure \ref{CPE}
we take values of $\nu$ ranging from $2^{-3}$ to $2^8$ and plot $E_{1}(a,T)$ for {$a=0$ (given by the lowest solid curve), $-0.5$, $-0.9$, $-1.1$, $-1.5$, $-2$ (given by the highest solid curve)}, and we plot the limiting value~\eqref{CC1} (given by the dashed curve). 
The figure illustrates that when $\nu$ is very large (and hence  ${\bar m}$ very small), then $E_1(a,T)$ is close to 1. 
This reflects the fact that, under $\nu {\bar m}=1$, \textcolor{black}{as} $\nu \to \infty$, each client generates claims in an increasingly deterministic manner, and hence large fluctuations in the capital are more likely to be caused by fluctuations in the number of clients. 
Evidently, the opposite reasoning applies when $\nu \downarrow 0$. 
The figure also shows that, in the setting considered, for lower values of $a$, fluctuations in the number of clients play an increasingly important role.
\end{example}

\section{Proofs of the large-deviation {principles} }\label{Sec:proofs}

\subsection{Finite-dimensional LDP}\label{Sec:FDLDP}

To establish the {multi-point} LDP stated in Proposition \ref{P2} it remains to compute $M^-_{\bs t}(\bs\omega, \bs\theta)$ and $M^+_{\bs t}(\bs\omega, \bs\theta)$ (as defined in \eqref{eq:Mdef}).
We start by evaluating $M^-_{\bs t}(\bs\omega, \bs\theta)$.


Recall that $\tau^\circ$ is a variable corresponding to a typical residual sojourn-time duration of a client who is present at time $0$. {Let
$\Omega_k:=\sum_{j=1}^{k}\omega_j$.} With $t_0\equiv 0$ and, as before,  $\bar h^\circ(t):=\int_t^\infty h^\circ(s)\,{\rm d}s$,
\begin{align}
M^-_{\bs t}(\bs\omega, \bs\theta) &=
{\mathbb E}\exp\left( \sum_{j=1}^d 
\omega_j 1\{\tau^\circ >t_j\}+\sum_{j=1}^d\theta_j 
Z(\tau^\circ\wedge t_j)
\right)\nonumber \\ \label{E1}
&=\sum_{k=1}^d \int_{t_{k-1}}^{t_k} h^\circ(s) \,{\mathbb E}\exp\left(\sum_{j=1}^d\theta_j 
Z(s\wedge t_j)\right)\exp\left(\Omega_{k-1}\right){\rm d}s\:+\\&\:\:\:\: \:\:
\bar h^\circ(t_d)\,\, {\mathbb E}\exp\left(\sum_{j=1}^d\theta_j Z(t_j)\right)\exp\left(\Omega_d\right). \label{E2}\end{align}
To further evaluate $M^-_{\bs t}(\bs\omega, \bs\theta)$, let us first focus on Expression \eqref{E2}. By using a telescopic sum representation, and denoting $\Theta_k:=\sum_{j=k}^d\theta_j$ and $\delta_k:=t_k-t_{k-1}$, we obtain that the mgf featuring in this term equals
\begin{align*}
{\mathbb E}\exp\left(\sum_{j=1}^d\theta_j \sum_{k=1}^j\big(Z(t_k)-Z(t_{k-1})\big)\right)&=
{\mathbb E}\exp\left(\sum_{k=1}^d\Theta_k \big(Z(t_k)-Z(t_{k-1})\big)\right)=\prod_{k=1}^d(\varphi(\Theta_k))^{\delta_k}.
\end{align*}
The other term in $M^-_{\bs t}(\bs\omega, \bs\theta)$, i.e., Expression \eqref{E1}, can be computed along the same lines. To this end, we use that, evidently, for $s\in[t_{k-1},t_k)$ we have that $t_j\wedge s = t_j$ for $j = 0,\ldots,k-1$, whereas $t_j\wedge s = s$ for $j=k,\ldots,d.$ By some standard algebra, we thus obtain that the mgf featuring in the $k$-th term in the sum, for $s\in[t_{k-1},t_k)$,
\begin{align*}{\mathbb E}\exp\left(\sum_{j=1}^{k-1}\theta_j 
Z(t_j) +Z(s)\sum_{j=k}^{d}\theta_j 
  \right) = \left(\prod_{j=1}^{k-1} (\varphi(\Theta_j))^{\delta_j}\right)\cdot (\varphi(\Theta_k))^{s-t_{k-1}}
\end{align*}
Upon combining the above, we have found the following expression for $M^-_{\bs t}(\bs\omega, \bs\theta)$. Observe that it is fully in terms of the `partial sum series' $\Theta_k$ and $\Omega_k$, corresponding the arguments ${\bs \theta}$ and ${\bs \omega}$, respectively. 
\begin{lemma} \label{L1} For $\bs\omega, \bs\theta\in{\mathbb R}^d$,
\begin{equation}\label{Mme}
M^-_{\bs t}(\bs\omega, \bs\theta)= 
\sum_{k=1}^d e^{\Omega_{k-1}}\prod_{j=1}^{k-1} (\varphi(\Theta_j))^{\delta_j}\int_{t_{k-1}}^{t_k} h^\circ(s) \,
\varphi(\Theta_k)^{s-t_{k-1}}
{\rm d}s +
e^{\Omega_d}\,\bar h^\circ(t_d) \prod_{k=1}^d(\varphi(\Theta_k))^{\delta_k}.
\end{equation}
\end{lemma}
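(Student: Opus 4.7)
The approach I would take is to exploit the observation that $(F^-(\cdot),G^-(\cdot))$ is the contribution of a \emph{single} initial client and hence only depends on the scalar random variables $\tau^\circ$ and the path $Z(\cdot)$ (independent of $\tau^\circ$). So I would condition on the residual sojourn time $\tau^\circ$ and carry out a case analysis based on which of the intervals $[t_{k-1},t_k)$ it falls in, together with the tail event $\{\tau^\circ \geqslant t_d\}$. The point of doing this is that on the event $\{\tau^\circ \in [t_{k-1},t_k)\}$ the indicator $1\{\tau^\circ>t_j\}$ is fully determined for every $j$, and $Z(\tau^\circ\wedge t_j)$ collapses to $Z(t_j)$ for $j<k$ and to $Z(\tau^\circ)$ for $j\geqslant k$.

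The second ingredient is the Lévy (stationary independent increments) structure of $Z(\cdot)$. I would rewrite $\sum_{j=1}^d \theta_j Z(t_j)$ by a ``telescoping-and-swap'' trick: writing $Z(t_j)=\sum_{k\leqslant j}(Z(t_k)-Z(t_{k-1}))$ and swapping the order of summation produces $\sum_{k=1}^d \Theta_k\,(Z(t_k)-Z(t_{k-1}))$, where the coefficient $\Theta_k=\sum_{j\geqslant k}\theta_j$ arises because $t_j$ contributes to the $k$-th increment precisely when $j\geqslant k$. Independence of increments and $\mathbb{E}e^{\theta(Z(t)-Z(s))}=\varphi(\theta)^{t-s}$ then give the clean product $\prod_k \varphi(\Theta_k)^{\delta_k}$. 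For a partial window up to $s\in[t_{k-1},t_k)$ the same telescoping yields $\prod_{j<k}\varphi(\Theta_j)^{\delta_j}\cdot\varphi(\Theta_k)^{s-t_{k-1}}$.

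Combining the two ingredients, the contribution from $\tau^\circ\in[t_{k-1},t_k)$ to $M^-_{\bs t}(\bs\omega,\bs\theta)$ is
\[
e^{\Omega_{k-1}}\,\prod_{j=1}^{k-1}\varphi(\Theta_j)^{\delta_j}\int_{t_{k-1}}^{t_k} h^\circ(s)\,\varphi(\Theta_k)^{s-t_{k-1}}\,\mathrm{d}s,
\]
since on this event exactly the indicators for $j\leqslant k-1$ fire (giving the factor $e^{\Omega_{k-1}}$). The contribution from the tail event $\{\tau^\circ\geqslant t_d\}$ is the full product $\prod_{k=1}^d \varphi(\Theta_k)^{\delta_k}$, multiplied by $e^{\Omega_d}$ (all indicators fire) and by $\bar h^\circ(t_d)$ (the tail probability). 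Summing over $k=1,\dots,d$ and adding the tail term yields exactly the claimed expression.

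The main obstacle, as I see it, is not any deep idea but careful bookkeeping: getting the index in $\Omega_{k-1}$ right (depending on whether $\tau^\circ \geqslant t_j$ is interpreted strictly or not, and where the interval endpoints are placed), and making sure the ``swap'' that turns $\theta_j$-indexed sums into $\Theta_k$-indexed products is applied consistently on the partial window $[t_{k-1},s]$. Once these conventions are fixed, the derivation is essentially algebraic, and no convergence or analytic issue needs to be addressed, so the proof should be short.
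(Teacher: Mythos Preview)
Your proposal is correct and follows essentially the same approach as the paper: condition on the interval containing $\tau^\circ$ (plus the tail event $\{\tau^\circ>t_d\}$), use the telescoping rewrite $\sum_j\theta_j Z(t_j)=\sum_k\Theta_k(Z(t_k)-Z(t_{k-1}))$ together with independent increments of $Z(\cdot)$, and assemble the pieces. The only difference is presentational; the paper carries out exactly the bookkeeping you describe.
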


We now evaluate $M^+_{\bs t}(\bs\omega, \bs\theta)$. This is done by distinguishing the contributions due to clients who arrive in each of the intervals $[t_{\ell-1},t_\ell)$, for $\ell=1,\ldots,d$, which are independent (as argued before). We thus arrive at the decomposition
\begin{equation}\label{Mp}
M^+_{\bs t}(\bs\omega, \bs\theta) = \prod_{\ell=1}^d M^+_{{\bs t}, \ell}(\bs\omega, \bs\theta),
\end{equation}
where
\begin{equation}\label{Mpl}
M^+_{{\bs t}, \ell}(\bs\omega, \bs\theta) = \sum_{k=0}^\infty 
e^{-\lambda \delta_\ell}\frac{(\lambda \delta_\ell)^k}{k!} \left(\bar M^+_{{\bs t}, \ell}(\bs\omega, \bs\theta)\right)^k=\exp\left(\lambda\delta_\ell\left(\bar M^+_{{\bs t}; \ell}(\bs\omega, \bs\theta)-1\right)\right),
\end{equation}
here $\bar M^+_{{\bs t}, \ell}(\bs\omega, \bs\theta)$ is the mgf that corresponds to the contribution of a single client arriving at a uniformly distributed epoch in $[t_{\ell-1},t_\ell)$, an interval of length $\delta_\ell$. 
As a consequence, with $t_{j,\ell}:=t_j-t_\ell$, we have that
\[\bar M^+_{{\bs t}, \ell}(\bs\omega, \bs\theta) = 
{\mathbb E}\exp\left(\sum_{j=\ell}^d \theta_j Z(\tau\wedge (\delta_\ell(1-U)+t_{j,\ell}))+
\sum_{j=\ell}^d \omega_j 1\{\tau > \delta_\ell(1-U)+t_{j,\ell}\}
\right).\]
By distinguishing between the values of $\tau$, the expression in the previous display can be decomposed into the sum of the three terms. The first term
corresponds with the scenario that the client has left by time $t_\ell$. It can be written as
\begin{align}
\nonumber \bar M^+_{{\bs t}, \ell,1}(\bs\omega, \bs\theta) &:=\int_0^{\delta_\ell}\frac{1}{\delta_\ell}\int_0^{\delta_\ell-s}h(r)\, {\mathbb E}\exp\left(\sum_{j=\ell}^d \theta_j Z(r)\right){\rm d}r\,{\rm d}s
\\&=
\int_0^{\delta_\ell}\frac{1}{\delta_\ell}\int_0^{t_{\ell,\ell-1}-s}h(r)\, (\varphi(\Theta_\ell))^r{\rm d}r\,{\rm d}s \label{Mbpl1}
.\end{align}
The second term
corresponds to the scenario that the client has left between $t_k$ and $t_{k+1}$, for some index $k\in\{\ell,\ldots,d-1\}$. We obtain
\begin{align}\nonumber\bar M^+_{{\bs t}, \ell,2}(\bs\omega, \bs\theta) &:=\int_0^{\delta_\ell}\frac{1}{\delta_\ell}
\sum_{k=\ell}^{d-1}\int_{t_{k,\ell-1}-s}^{t_{k+1,\ell-1}-s}h(r)\cdot\\
&\hspace{3cm}
{\mathbb E}\exp\left(\sum_{j=\ell}^{k} \theta_j Z(t_{j,\ell-1}-s)
+\sum_{j=k+1}^d \theta_j Z(r)+\sum_{j=\ell}^{k} \omega_j
\right){\rm d}r\,{\rm d}s \nonumber \\
&\hspace{-2mm}=\int_0^{\delta_\ell}\frac{1}{\delta_\ell}
\sum_{k=\ell}^{d-1}\int_{t_{k,\ell-1}-s}^{t_{k+1,\ell-1}-s}h(r)\,e^{\Omega_k-\Omega_{\ell-1}}\, (\varphi(\Theta_\ell))^{\delta_{\ell}-s}\cdot\nonumber\\
&\hspace{3cm}
\left(\prod_{m=\ell+1}^k (\varphi(\Theta_m))^{\delta_m}\right)
\cdot((\varphi(\Theta_{k+1}))^{r-(t_{k,\ell-1}-s)} {\rm d}r\,{\rm d}s\label{Mbpl2}
.\end{align}
To verify the expression \eqref{Mbpl2} in the above display, use the distributional equality,
with $Z(s,t):=Z(t)-Z(s)$ for $s\leqslant t$,
\begin{align*}
    \sum_{j=\ell}^{k}& \theta_j Z(t_{j,\ell-1}-s)
+\sum_{j=k+1}^d \theta_j Z(r) \\
&\stackrel{\rm d}{=}\Theta_\ell Z(t_{\ell-1}+s,t_\ell)+ 
\sum_{m=\ell+1}^k \Theta_m Z(t_{m-1},t_m) + \Theta_{k+1} Z(t_k,t_{\ell-1}+s+r),
\end{align*}
which can proven by splitting $Z(t_{j,\ell-1}-s)$ and $Z(r)$ in the left-hand side into the contributions due to the individual intervals, swapping the order of summation, and using the fact that 
$r$ lies in the interval $[t_{k,\ell-1}-s, t_{k+1,\ell-1}-s)$, in combination with the fact that all random variables on the right-hand side are independent due to the independent increments property of the L\'evy process $Z(\cdot)$. 

Finally, the third term describes the contribution due to the scenario that the client leaves after $t_d$:
\begin{align}\bar M^+_{{\bs t}, \ell,3}(\bs\omega, \bs\theta) &:=\int_0^{\delta_\ell}\frac{1}{\delta_\ell}
\int_{t_{d,\ell-1}-s}^\infty h(r)\,{\mathbb E}\exp\left(\sum_{j=\ell}^{d} \theta_j Z(t_{j,\ell-1}-s)
+\sum_{j=\ell}^{d} \omega_j
\right){\rm d}r\,{\rm d}s \nonumber \\
&=\int_0^{\delta_\ell}\frac{1}{\delta_\ell}
\int_{t_{d,\ell-1}-s}^\infty h(r)\,e^{\Omega_d-\Omega_{\ell-1}}\, (\varphi(\Theta_\ell))^{\delta_{\ell}-s}\cdot
\left(\prod_{m=\ell+1}^d (\varphi(\Theta_m))^{\delta_m}\right){\rm d}r\,{\rm d}s \label{Mbpl3}
.\end{align}

Combining the above, we have found the following expression for $M^+_{\bs t}(\bs\omega, \bs\theta)$.
Again it is fully in terms of the `partial sum series' $\Theta_k$ and $\Omega_k$.

\begin{lemma} \label{L2} For $ \bs\omega,\bs\theta\in{\mathbb R}^d$, we can compute $M^+_{\bs t}(\bs\omega, \bs\theta)$ by \eqref{Mp}, involving $\bar M^+_{{\bs t}, \ell}(\bs\omega, \bs\theta)$ via \eqref{Mpl}. Here $\bar M^+_{{\bs t}, \ell}(\bs\omega, \bs\theta)$  equals the sum of \eqref{Mbpl1}, \eqref{Mbpl2}, and \eqref{Mbpl3}.
\end{lemma}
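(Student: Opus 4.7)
The plan is to build up $M^+_{\bs t}(\bs\omega,\bs\theta)$ by exploiting the independence properties of the Poisson arrival process, then condition on the arrival epoch and the sojourn time of a generic newly arriving client. First I would use the fact that the arrival process in $(0,t_d]$ is Poisson of rate $\lambda$ and that the contributions of clients arriving in disjoint subintervals $[t_{\ell-1},t_\ell)$ are independent. This immediately gives the product decomposition \eqref{Mp}. For the $\ell$-th factor, the number of arrivals in $[t_{\ell-1},t_\ell)$ is $\mathrm{Pois}(\lambda\delta_\ell)$, and conditionally on this number the arrival epochs are iid uniform on $[t_{\ell-1},t_\ell)$. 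Summing the Poisson series of the $k$-th power of the per-client mgf $\bar M^+_{{\bs t},\ell}(\bs\omega,\bs\theta)$ yields the exponential form \eqref{Mpl}.

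Next I would compute $\bar M^+_{{\bs t},\ell}(\bs\omega,\bs\theta)$, which is the mgf of a single client arriving at a uniform epoch $U\delta_\ell+t_{\ell-1}$ in the $\ell$-th interval. Writing the arrival epoch as $t_{\ell-1}+s$ with $s\in[0,\delta_\ell]$ and conditioning on the sojourn time $\tau$, I would split into three mutually exclusive scenarios according to the departure time: (i) $\tau\in[0,\delta_\ell-s)$, so the client departs inside $[t_{\ell-1},t_\ell)$ and contributes to none of the indicator terms $1\{\tau>\delta_\ell(1-U)+t_{j,\ell}\}$; (ii) $\tau\in[t_{k,\ell-1}-s,t_{k+1,\ell-1}-s)$ for some $k\in\{\ell,\ldots,d-1\}$, so the client is present at $t_\ell,\ldots,t_k$ but not at $t_{k+1},\ldots,t_d$; (iii) $\tau\geqslant t_{d,\ell-1}-s$, so the client is present at every sampling epoch. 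These three scenarios yield the three summands \eqref{Mbpl1}, \eqref{Mbpl2} and \eqref{Mbpl3}, respectively.

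To evaluate the conditional mgf inside each scenario, I would use the independent increments property of the compound Poisson process $Z(\cdot)$ together with the identity $\mathbb{E}\exp(\theta Z(t))=(\varphi(\theta))^t$. For scenario (i) only $Z(\tau)$ is relevant, which after integrating over $\tau$ produces the factor $(\varphi(\Theta_\ell))^r$ under the sojourn density $h(r)$. For scenario (iii) the client is present at all epochs, so one applies the telescoping-sum trick already used for $M^-_{\bs t}$ (writing $\sum_j\theta_j Z(t_j)=\sum_k \Theta_k(Z(t_k)-Z(t_{k-1}))$ and factorising) to obtain the product $(\varphi(\Theta_\ell))^{\delta_\ell-s}\prod_{m=\ell+1}^d(\varphi(\Theta_m))^{\delta_m}$. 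For scenario (ii) the same telescoping applied on the intervals $[t_{\ell-1}+s,t_\ell),[t_\ell,t_{\ell+1}),\ldots,[t_{k-1},t_k),[t_k,t_{\ell-1}+s+r)$ gives the decomposition recorded in the excerpt just below \eqref{Mbpl2}, so that the mgf factorises into the product of $\varphi$-terms raised to the appropriate partial-sum exponents $\Theta_\ell,\ldots,\Theta_{k+1}$ and the exponentials $e^{\Omega_k-\Omega_{\ell-1}}$ coming from the indicator terms for epochs $t_\ell,\ldots,t_k$.

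The main bookkeeping obstacle is scenario (ii): one needs to keep track both of which indicator variables fire (giving the $e^{\Omega_k-\Omega_{\ell-1}}$ factor) and of how the Lévy process increments have to be regrouped so that every $\theta_j$ gets converted into a contribution involving the correct $\Theta_m$. The cleanest way is to first write each $Z(t_{j,\ell-1}-s)$ (for $j\leqslant k$) and $Z(r)$ (for $j\geqslant k+1$) as a telescoping sum of independent increments over the atomic intervals, then swap the order of summation so that each atomic increment is multiplied by exactly the partial sum $\Theta_m$ of those $\theta_j$'s for which that increment appears. Once this identity is established the integration over $r$ and $s$ is immediate. Combining \eqref{Mbpl1}--\eqref{Mbpl3} into $\bar M^+_{{\bs t},\ell}$ and substituting back into \eqref{Mp}--\eqref{Mpl} completes the proof.
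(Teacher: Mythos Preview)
Your proposal is correct and follows essentially the same approach as the paper: the product decomposition \eqref{Mp} via independence across arrival intervals, the Poisson--uniform conditioning leading to \eqref{Mpl}, the three-way case split on the departure time of a generic client, and the telescoping/independent-increments identity to handle scenario (ii) are exactly the steps the paper takes.
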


With the moment generating functions given by the expressions above, we thus arrive at the large-deviation principle  presented in Proposition \ref{P2}.


\subsection{Sample-path LDP}\label{Sec:SPLDP}

We now establish the sample-path LDP of Theorem \ref{thm:main}. First observe that from the finite-dimensional LDP given in Proposition \ref{P2}, in combination with the Dawson--G\"artner projective limit theorem \cite[Thm.\  4.6.1]{DZ}, we obtain a sample-path LDP in the pointwise topology (which we denote by $\mathcal{X}$) with rate $n$ and rate function 
\begin{equation}\label{defI}
I^{\mathcal{X}}_{[0,T]}(f,g):= \sup_{d\in{\mathbb N}}\sup_{0\leqslant t_1< \dots < t_d \leqslant T} I_{(t_1, \dots, t_d)}((f(t_1), \dots, f(t_d)) ,(g(t_1), \dots, g(t_d))).
\end{equation}
{Recall that $I_{[0,T}(f,g)$ is the rate function characterised by \eqref{eq:IsD} and Lemmas \ref{L3} and \ref{L4} below.} To establish Theorem \ref{thm:main} we need to (i) show that  \[I^{\mathcal{X}}_{[0,T]}(f,g)=I_{[0,T]}(f,g),\] and (ii) strengthen the topology from $\mathcal{X}$ to the Skorokhod topology by establishing that {the sequence of bivariate processes $\{(\bar F_n(\cdot),\bar G_n(\cdot))\}_{n \in \mathbb{N}}$} is exponentially tight.

To establish (i) we need to verify that
\begin{itemize}
\item[(i-a)] $I^{\mathcal{X}}_{[0,T]}(f,g)=\infty$ when $f$ or $g$ is not absolutely continuous, 
\item[(i-b)] $I^{\mathcal{X}}_{[0,T]}(f,g) \leqslant I_{[0,T]}(f,g)$, and 
\item[(i-c)]  $I^{\mathcal{X}}_{[0,T]}(f,g) \geqslant I_{[0,T]}(f,g)$. 
\end{itemize}
In Section \ref{Sec:SPLDP1} we establish properties (i-b) and (i-c) {when the rate function is expressed in terms of limiting counterparts of the finite-dimensional mgf\,s $M^i(\bs\omega, \bs\theta)$, $i \in \{-,+\}$ that we derived in Section \ref{Sec:FDLDP}. We then find explicit expressions for these limiting mgf\,s.} 
In Section \ref{Sec:ExT} we prove properties (ii) and (i-a), exploiting the fact that they can be established via similar arguments. {This completes the proof the Theorem \ref{thm:main}.
Finally, in Section \ref{Sec:AF} we provide an alternate expression for $I _{[0,T]}(f,g)$ that may be attractive for computational purposes.}
 
%
%

\subsubsection{Upper and lower bounds {and the limiting mgf\,s}}\label{Sec:SPLDP1} In our construction we work with a mesh of dimension $d$ that we make increasingly fine. To this end, we define, for given functions $\omega(\cdot)$ and $\theta(\cdot)$,
\begin{equation}\label{scaleDefs}
d:=T/\Delta,\quad t_k:=k\Delta, \quad \theta_k:=\Delta\, \theta(k\Delta), \quad \omega_k=\Delta \,\omega(k \Delta), \quad \mbox{and} \quad  \delta_k:=\Delta.
\end{equation}
In addition, we introduce
\begin{equation}\label{eq:IsD}
I_{[0,T]}(f,g) =  \sup_{\omega(\cdot),\theta(\cdot)} \left( \int^T_0 \left[ \omega(s)\, f(s)+\theta(s)\,g(s)  \right]{\rm d}s - f_0 \log M_{[0,T]}^-(\omega, \theta) - \log M_{[0,T]}^+(\omega, \theta) \right),
\end{equation}
where the mgf\,s $M_{[0,T]}^i(\omega, \theta)$ are given by
\begin{align}
\begin{split}\label{eq:MNLD}
M_{[0,T]}^i(\omega, \theta)&:= 
\lim_{\Delta\downarrow 0}  {\mathbb E}\exp\left(\sum_{k=1}^{T/\Delta}\Delta\,\omega(k \Delta) F_i(k \Delta)  +\sum_{k=1}^{T/\Delta}
\Delta \,\theta(k\Delta)G_i(k\Delta)\right), \quad i \in \{-,+\};
\end{split}
\end{align}
the supremum in \eqref{eq:IsD} is taken over all continuous bounded functions on $[0,T]$.
\begin{lemma}
If $f$ and $g$ are absolutely continuous, then $I^{\mathcal{X}}_{[0,T]}(f,g) \geqslant I _{[0,T]}(f,g)$.
\end{lemma}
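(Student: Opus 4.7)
The plan is to bound $I^{\mathcal{X}}_{[0,T]}(f,g)$ from below by evaluating the supremum in \eqref{defI} on a fine uniform mesh and, within the inner Cram\'er-theorem supremum defining $I_{\bs t}$, plugging in a discretization of a pair of continuous bounded test functions $\omega(\cdot),\theta(\cdot):[0,T]\to\mathbb{R}$. Sending the mesh width $\Delta\downarrow 0$ turns this lower bound into the expression inside the supremum in \eqref{eq:IsD}, and a final supremum over $\omega,\theta$ delivers the inequality.

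\textbf{Main steps.} Fix continuous bounded $\omega(\cdot),\theta(\cdot)$ on $[0,T]$. For each $\Delta>0$ with $d:=T/\Delta\in\mathbb{N}$, take the uniform mesh $t_k=k\Delta$ together with the discretization $\omega_k:=\Delta\,\omega(k\Delta)$, $\theta_k:=\Delta\,\theta(k\Delta)$, exactly as in \eqref{scaleDefs}. Since \eqref{defI} supremizes over all finite meshes and the Legendre-type representation of $I_{\bs t}$ in Proposition \ref{P2} supremizes over all $(\bs\omega,\bs\theta)\in\mathbb{R}^{2d}$, we obtain the two-step lower bound
\[
I^{\mathcal{X}}_{[0,T]}(f,g) \;\geqslant\; \sum_{k=1}^{d}\Delta\,\omega(k\Delta)\,f(k\Delta)+\sum_{k=1}^{d}\Delta\,\theta(k\Delta)\,g(k\Delta)-f_0\log M^-_{\bs t}(\bs\omega,\bs\theta)-\log M^+_{\bs t}(\bs\omega,\bs\theta).
\]
Now let $\Delta\downarrow 0$. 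The two Riemann sums converge to $\int_0^T\omega(s)f(s)\,{\rm d}s$ and $\int_0^T\theta(s)g(s)\,{\rm d}s$, since absolute continuity of $f$ and $g$ implies continuity on $[0,T]$, so that the integrands are continuous on a compact interval. Moreover, with this specific choice of $(\bs\omega,\bs\theta)$, the finite-dimensional mgf $M^i_{\bs t}(\bs\omega,\bs\theta)$ coincides with the pre-limit expectation appearing inside the $\lim_{\Delta\downarrow 0}$ in \eqref{eq:MNLD}, so by the very definition of $M^i_{[0,T]}(\omega,\theta)$ we have $M^i_{\bs t}(\bs\omega,\bs\theta)\to M^i_{[0,T]}(\omega,\theta)$ for $i\in\{-,+\}$. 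Passing to the $\liminf$ on both sides and using continuity of $\log$ at the limiting values (which are finite by Lemmas \ref{L3} and \ref{L4}) yields
\[
I^{\mathcal{X}}_{[0,T]}(f,g)\;\geqslant\;\int_0^T\bigl[\omega(s)f(s)+\theta(s)g(s)\bigr]\,{\rm d}s-f_0\log M^-_{[0,T]}(\omega,\theta)-\log M^+_{[0,T]}(\omega,\theta).
\]
Since $\omega,\theta$ were arbitrary continuous bounded functions, taking the supremum on the right-hand side and appealing to \eqref{eq:IsD} gives $I^{\mathcal{X}}_{[0,T]}(f,g)\geqslant I_{[0,T]}(f,g)$.

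\textbf{Main potential obstacle.} The only genuinely delicate point is the passage $\log M^i_{\bs t}(\bs\omega,\bs\theta)\to \log M^i_{[0,T]}(\omega,\theta)$, which requires that the limit $M^i_{[0,T]}(\omega,\theta)$ exist and be strictly positive and finite. Strict positivity is immediate since these are mgf\,s of real-valued random variables; existence and finiteness (together with an explicit formula that will in practice be needed to verify the matching upper bound in (i-b)) are precisely what Lemmas \ref{L3} and \ref{L4} are introduced for. Everything else in the argument---the chain-of-supremums step and Riemann-sum convergence---is standard.
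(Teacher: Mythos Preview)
Your proof is correct and follows essentially the same approach as the paper: fix continuous bounded test functions $\omega,\theta$, discretize them on a uniform mesh via \eqref{scaleDefs}, use this as a lower bound for $I^{\mathcal X}_{[0,T]}$, let $\Delta\downarrow 0$ (invoking Riemann-sum convergence for the linear terms and the definition \eqref{eq:MNLD} for the mgf terms), and finally take the supremum over $\omega,\theta$. Your direct presentation is in fact cleaner than the paper's somewhat roundabout contradiction argument, but the underlying idea is identical.
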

\begin{proof}
We have
\begin{align}
\begin{split}\label{eq:IsD2}
    I^{\mathcal{X}}_{[0,T]}(f,g) &\geqslant \lim_{\Delta \downarrow 0} I_{(t_1, \dots, t_d)}\big((f(t_1), \dots, f(t_d)) ,(g(t_1), \dots, g(t_d))\big) \\
    &=\lim_{\Delta \downarrow 0}\sup_{\omega(\cdot),\theta(\cdot)} \left(\sum_{j=1}^d \omega_j f(t_j) +\sum_{j=1}^d \theta_j g(t_j)- f_0\log M^-_{\bs t}(\bs\omega, \bs\theta) - \log M^+_{\bs t}(\bs\omega, \bs\theta)\right) \\
    &\geqslant I _{[0,T]}(f,g),
    \end{split}
\end{align}
where the first inequality is due to the definition \eqref{defI}, and 
where the second inequality follows by {contradiction. Indeed, suppose that the inequality does not hold. Then there exists $\omega^\star , \theta^\star $ such that 
\begin{align*}
      \int^T_0[\omega^\star (s)\,f( & s)+\theta^\star (s)\,g(s)]{\rm d}s - f_0 \log M^-_{[0,T]}(\omega^\star ,\theta^\star )-\log M^+_{[0,T]}(\omega^\star , \theta^\star ) \\
    &< \lim_{\Delta \downarrow 0} \sup_{\omega(\cdot), \theta(\cdot)} \left( \sum_{j=1}^d \omega_j \,f(t_j) + \sum^d_{j=1} \theta_j \,g(t_j) - f_0\log M^-_{\bs{t}}(\bs{\omega}, \bs{\theta}) - \log M^+_{\bs{t}}(\bs{\omega}, \bs{\theta})  \right);
\end{align*}
however, if we ignore the supremum on the right-hand side and replace $(\omega,\theta)$ by $(\omega^\star , \theta^\star )$ we obtain equality, i.e., a contradiction.}
\end{proof}

\begin{lemma} 
If $f$ and $g$ are absolutely continuous then $I^{\mathcal{X}}_{[0,T]}(f,g) \leqslant I _{[0,T]}(f,g)$.
\end{lemma}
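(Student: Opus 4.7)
The plan is to show that, for every finite mesh $0\leqslant t_1<\dots<t_d\leqslant T$ and every $(\bs\omega,\bs\theta)\in\mathbb{R}^{2d}$ with $M^\pm_{\bs t}(\bs\omega,\bs\theta)<\infty$ (outside this set the finite-dimensional expression below is $-\infty$ and there is nothing to prove), the quantity
\[
A := \sum_{j=1}^d\omega_j f(t_j)+\sum_{j=1}^d\theta_j g(t_j) - f_0\log M^-_{\bs t}(\bs\omega,\bs\theta) - \log M^+_{\bs t}(\bs\omega,\bs\theta)
\]
is bounded above by $I_{[0,T]}(f,g)$; taking suprema over meshes and $(\bs\omega,\bs\theta)$ then yields $I^{\mathcal X}_{[0,T]}(f,g)\leqslant I_{[0,T]}(f,g)$. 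The strategy is to exhibit continuous bounded test functions $\omega_\epsilon,\theta_\epsilon$ on $[0,T]$ such that the corresponding quantity inside the supremum \eqref{eq:IsD} converges to $A$ as $\epsilon\downarrow 0$.

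Concretely, I would set $\omega_\epsilon(s):=\sum_{j=1}^d\omega_j\phi_\epsilon(s-t_j)$ and $\theta_\epsilon(s):=\sum_{j=1}^d\theta_j\phi_\epsilon(s-t_j)$, where $\phi_\epsilon$ is a smooth nonnegative bump of integral $1$ supported in $[-\epsilon,\epsilon]$. For $\epsilon$ smaller than half of $\min_j(t_j-t_{j-1})$ the individual bumps have disjoint supports and, after a harmless truncation near the endpoints $0$ and $T$, all lie in $[0,T]$. Continuity of $f$ and $g$ (given by their absolute continuity) immediately yields $\int_0^T\omega_\epsilon(s) f(s)\,{\rm d}s\to\sum_j\omega_j f(t_j)$ and $\int_0^T\theta_\epsilon(s) g(s)\,{\rm d}s\to\sum_j\theta_j g(t_j)$.

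The core step is the convergence $M^i_{[0,T]}(\omega_\epsilon,\theta_\epsilon)\to M^i_{\bs t}(\bs\omega,\bs\theta)$ for $i\in\{-,+\}$. Writing $X_\epsilon:=\int_0^T\omega_\epsilon(s)F^i(s)\,{\rm d}s+\int_0^T\theta_\epsilon(s)G^i(s)\,{\rm d}s$, so that $M^i_{[0,T]}(\omega_\epsilon,\theta_\epsilon)=\mathbb{E}e^{X_\epsilon}$, almost sure convergence $X_\epsilon\to\sum_j[\omega_j F^i(t_j)+\theta_j G^i(t_j)]$ holds because the jump times of $F^i$ and $G^i$ are absolutely continuous (arising from Poisson arrivals together with the densities $h$ and $h^\circ$), so a.s.\ $F^i,G^i$ are continuous at each fixed $t_j$ and Lebesgue differentiation applies to $\int\phi_\epsilon(s-t_j)F^i(s)\,{\rm d}s\to F^i(t_j)$. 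Fatou's lemma then gives $\liminf\mathbb{E}e^{X_\epsilon}\geqslant M^i_{\bs t}(\bs\omega,\bs\theta)$. Rather than fight for a dominating integrable envelope for the matching upper bound --- which would require exponential moments that the light-tailed assumption alone does not supply at arbitrary parameter values --- I would use Jensen's inequality: with $Y_j:=\int\phi_\epsilon(s-t_j)[\omega_j F^i(s)+\theta_j G^i(s)]\,{\rm d}s$, convexity of $\exp$ gives $e^{Y_j}\leqslant\int\phi_\epsilon(s-t_j)e^{\omega_j F^i(s)+\theta_j G^i(s)}\,{\rm d}s$, hence
\[
e^{X_\epsilon}=\prod_{j=1}^d e^{Y_j}\leqslant \prod_{j=1}^d\int\phi_\epsilon(s-t_j)\exp\bigl(\omega_j F^i(s)+\theta_j G^i(s)\bigr)\,{\rm d}s.
\]
Introducing independent random times $\xi_1,\dots,\xi_d$ with $\xi_j$ of density $\phi_\epsilon(\cdot-t_j)$, independent of $(F^i,G^i)$, Fubini yields $\mathbb{E}e^{X_\epsilon}\leqslant\mathbb{E}_{\boldsymbol\xi}\,M^i_{\boldsymbol\xi}(\bs\omega,\bs\theta)$.

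The explicit expressions derived in Lemmas \ref{L1} and \ref{L2} show that $\boldsymbol s\mapsto M^i_{\boldsymbol s}(\bs\omega,\bs\theta)$ is continuous at $\bs t$, and since $|\xi_j-t_j|\leqslant\epsilon$ we conclude, by bounded convergence on the compact neighborhood $\prod_j[t_j-\epsilon,t_j+\epsilon]$, that $\mathbb{E}_{\boldsymbol\xi}M^i_{\boldsymbol\xi}(\bs\omega,\bs\theta)\to M^i_{\bs t}(\bs\omega,\bs\theta)$; thus $\limsup\mathbb{E}e^{X_\epsilon}\leqslant M^i_{\bs t}(\bs\omega,\bs\theta)$. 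Combined with the $\liminf$ bound this gives full MGF convergence, and together with the linear-term convergence we obtain $I_{[0,T]}(f,g)\geqslant\lim_{\epsilon\downarrow 0} B(\omega_\epsilon,\theta_\epsilon)=A$, as required. The main obstacle is the absence of uniform integrability for the pathwise convergence of $e^{X_\epsilon}$; the Jensen-plus-continuity device bypasses it by reducing the required upper bound to the continuity of the finite-dimensional MGF in its time arguments, which is manifest from the explicit formulas of Section \ref{Sec:FDLDP}.
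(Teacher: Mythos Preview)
Your argument is correct and takes a genuinely different route from the paper's proof. The paper argues by contradiction: it fixes an offending mesh $\bs t$, embeds it into ever finer meshes $\bs s^{[\ell]}$, uses the contraction principle to get $I_{\bs t}\leqslant I_{\bs s^{[\ell]}}$, and then asserts $\lim_\ell I_{\bs s^{[\ell]}}=I_{[0,T]}(f,g)$, referring to (an extension of) the Riemann-sum computations in Lemmas~\ref{L3} and~\ref{L4}. In other words, the paper moves on the \emph{time side}: it refines the partition and must argue that the full supremum over $(\bs\omega,\bs\theta)$ passes to the limit. You instead move on the \emph{dual side}: for a fixed mesh and fixed $(\bs\omega,\bs\theta)$ you manufacture continuous test functions $\omega_\epsilon,\theta_\epsilon$ via bump smoothing and show that the value of the functional in \eqref{eq:IsD} at $(\omega_\epsilon,\theta_\epsilon)$ tends to the finite-dimensional value $A$. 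This avoids any sup-interchange: you only need the limsup of the mgf\,s, and your Jensen-plus-randomised-times device reduces that to continuity of $\bs s\mapsto M^i_{\bs s}(\bs\omega,\bs\theta)$, which is read off directly from the explicit formulas in Lemmas~\ref{L1} and~\ref{L2}. That is arguably cleaner than the paper's deferral to ``verifying that the arguments in the proofs of Lemmas~\ref{L3} and~\ref{L4} still apply'' on irregular meshes.

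Two small remarks. First, the step $M^i_{[0,T]}(\omega_\epsilon,\theta_\epsilon)=\mathbb{E}\,e^{X_\epsilon}$ deserves one line of justification, since the paper \emph{defines} $M^i_{[0,T]}$ as a Riemann-sum limit rather than as the integral mgf; the identification follows either from the explicit formulas in Lemmas~\ref{L3}--\ref{L4} (which coincide with the integral mgf computed by conditioning on $\tau^\circ$, respectively on Poisson arrivals) or by a routine dominated-convergence argument for the Riemann sums of the c\`adl\`ag paths against the continuous bounded $\omega_\epsilon,\theta_\epsilon$. Second, the Fatou/liminf direction is not needed for the inequality you are proving: since $I_{[0,T]}(f,g)\geqslant B(\omega_\epsilon,\theta_\epsilon)$ for every $\epsilon$, it suffices that $\liminf_\epsilon B(\omega_\epsilon,\theta_\epsilon)\geqslant A$, and for that you only use $\limsup_\epsilon M^i_{[0,T]}(\omega_\epsilon,\theta_\epsilon)\leqslant M^i_{\bs t}(\bs\omega,\bs\theta)$, i.e., precisely the Jensen step.
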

\begin{proof}
Suppose to the contrary that  
$I^{\mathcal{X}}_{[0,T]}(f,g) > I _{[0,T]}(f,g)$ for some absolutely continuous $f$ and $g$.
In that case there must exist a vector $\bs t = (t_1, \dots, t_d)$ such that 
\begin{equation}\label{eq:c1}
I_{\bs t}\big((f(t_1), \dots, f(t_d)),(g(t_1), \dots, g(t_d)) \big)>I_{[0,T]}(f,g).
\end{equation}
For $\ell \in \mathbb{N}$, let $\bs s^{\ell}=(s^{[\ell]}_1, \dots , s^{[\ell]}_{k_\ell})$ be such that 
\begin{itemize}\item[$\circ$] for any $i\in\{1,\ldots,d\}$ there exists $ j\in\{1,\ldots, k_{\ell}\}$ with $t_i=s^{[\ell]}_j$,
\item[$\circ$] $\lim_{\ell \to \infty} \max_{j\in\{1 ,\ldots, k_\ell\}} |s^{[\ell]}_j -s^{[\ell]}_{j-1}|=0$. 
\end{itemize}
By the contraction principle, for any $\ell \geqslant 1$, we have
\begin{equation}\label{eq:c2}
I_{\bs t}((f(t_1), \dots, f(t_d),(g(t_1), \dots, g(t_d)))\leqslant I_{\bs s^{[\ell]}}\left((f(s^{[\ell]}_1), \dots, f(s^{[\ell]}_{k_\ell})),(g(s^{[\ell]}_1), \dots, g(s^{[\ell]}_{k_\ell}))\right).
\end{equation}
If we can show that 
\begin{equation}\label{eq:CTP}
\lim_{\ell \to \infty} I_{\bs s^{[\ell]}}\left((f(s^{[\ell]}_1), \dots, f(s^{[\ell]}_k)),(g(s^{[\ell]}_0), \dots, g(s^{[\ell]}_k))\right) = I _{[0,T]}(f,g),
\end{equation}
then, when combined with \eqref{eq:c2}, we have contradicted \eqref{eq:c1} and hence proved the result. 
To establish \eqref{eq:CTP} it suffices that we verify that arguments in the proofs of Lemmas \ref{L3} and \ref{L4}  below still apply when \[\theta^{[\ell]}_k:=(s^{[\ell]}_{k+1}-s^{[\ell]}_k)\, \theta(s^{[\ell]}_k),\:\:\:\omega^{[\ell]}_k:=(s^{[\ell]}_{k+1}-s^{[\ell]}_k)\, \omega(s^{[\ell]}_k)\] and $\ell \to \infty$ (rather than {$\theta_k=\Delta \theta(k \Delta)$ and $\omega_k=\Delta \omega(k \Delta)$ and} $\Delta \to 0$). As this verification is of a rather mechanical nature, we do not include it here.
\end{proof}


Our next task is to compute the limiting mgf\,s $M_{[0,T]}^-(\omega, \theta)$ and $M_{[0,T]}^+(\omega, \theta)$.
We start with the (somewhat easier) first mgf, i.e., the one pertaining to $G_-(\cdot)$ and $F_-(\cdot)$.  Let \[\Theta(s) =\int_s^T \theta(r)\,{\rm d}r,\:\:\:\: \Omega(s):=\int_0^s \omega(r)\,{\rm d}r,\]
i.e., the counterparts of the objects $\Theta_k$ and $\Omega_k$ that we worked with in the finite-dimensional context,
    and
    \[\Psi_{\omega, \theta}(u)=\Omega(u)+ \int_0^u\log \varphi\left( 
 \Theta(s)
\right) {\rm d}s.\]
\begin{lemma}\label{L3} For  $\omega\equiv \omega(\cdot)$ and $\theta\equiv \theta(\cdot)$,
\[M_{[0,T]}^-(\omega, \theta) = \int_0^T h^\circ (u) 
\,e^{\Psi_{\omega, \theta}(u)} {\rm d}u + \bar h^\circ (T) 
\,e^{\Psi_{\omega, \theta}(T)} .\]
\end{lemma}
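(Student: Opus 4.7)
The plan is to take the mesh width $\Delta \downarrow 0$ in the finite-dimensional closed form \eqref{Mme} provided by Lemma \ref{L1}, substituting the discretization \eqref{scaleDefs} and recognising the various pieces as Riemann sums. Under \eqref{scaleDefs}, the partial sums $\Omega_{k-1}=\sum_{j=1}^{k-1}\Delta\,\omega(j\Delta)$ and $\Theta_j=\sum_{m=j}^d \Delta\,\theta(m\Delta)$ are Riemann sums that converge uniformly (in $k$ respectively $j$) to the continuous-parameter counterparts $\Omega(t_{k-1})$ and $\Theta(t_j)$; likewise,
\begin{equation*}
\prod_{j=1}^{k-1}\varphi(\Theta_j)^{\delta_j} = \exp\biggl(\sum_{j=1}^{k-1}\Delta\log\varphi(\Theta_j)\biggr) \;\longrightarrow\; \exp\biggl(\int_0^{t_{k-1}}\log\varphi(\Theta(s))\,{\rm d}s\biggr).
\end{equation*}
All of these convergences rely on boundedness of $\omega,\theta$ (implicit in the class over which the supremum in \eqref{eq:IsD} is taken), together with the light-tail assumption on $\beta$, which keeps $\log\varphi$ finite and continuous on a compact interval containing the range of $\Theta(\cdot)$.

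The boundary term $e^{\Omega_d}\bar h^\circ(t_d)\prod_{k=1}^d \varphi(\Theta_k)^{\delta_k}$ in \eqref{Mme} then converges factor-by-factor to $e^{\Omega(T)}\bar h^\circ(T)\exp\bigl(\int_0^T\log\varphi(\Theta(s))\,{\rm d}s\bigr)=\bar h^\circ(T)\,e^{\Psi_{\omega,\theta}(T)}$, producing the second summand of the claimed formula. For the sum term in \eqref{Mme}, the natural step is to rewrite it as the single integral $\int_0^T h^\circ(s)\, F_\Delta(s)\,{\rm d}s$, where $F_\Delta$ is the step-like function equal to $e^{\Omega_{k-1}}\prod_{j=1}^{k-1}\varphi(\Theta_j)^{\delta_j}\cdot\varphi(\Theta_k)^{s-t_{k-1}}$ on $[t_{k-1},t_k)$. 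The trailing factor $\varphi(\Theta_k)^{s-t_{k-1}}$ is $1+O(\Delta)$ uniformly in $k$, since $s-t_{k-1}\leqslant\Delta$ and $|\log\varphi(\Theta_k)|$ is uniformly bounded; combined with the Riemann-sum convergences above this gives $F_\Delta(s)\to e^{\Psi_{\omega,\theta}(s)}$ pointwise, with the $F_\Delta$ uniformly bounded. Together with the uniform bound $h^\circ\leqslant C$ from the model assumptions in Section \ref{Sec:model}, dominated convergence delivers the limit $\int_0^T h^\circ(u)\,e^{\Psi_{\omega,\theta}(u)}\,{\rm d}u$.

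The main obstacle is essentially bookkeeping: one needs the two layers of approximation --- the Riemann sum inside the exponent $\Psi$, and the outer integral against $h^\circ$ --- to pass to the limit \emph{simultaneously} rather than sequentially. This is bought by the uniform convergence of all ingredients on a compact set, guaranteed by boundedness of $\omega,\theta$ and the light-tail assumption on $\beta$ (hence on $\varphi$), together with the uniform bound on $h^\circ$. Adding the two limits then recovers exactly the right-hand side of the lemma statement.
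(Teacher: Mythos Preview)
Your proposal is correct and follows essentially the same route as the paper: both start from the finite-dimensional expression \eqref{Mme}, insert the discretisation \eqref{scaleDefs}, and pass to the limit by recognising the $\Omega_{k-1}$, $\Theta_j$, and $\sum_j \Delta\log\varphi(\Theta_j)$ as Riemann sums, treating the boundary term and the sum term separately. Your write-up is in fact somewhat more explicit than the paper's about the justification of the limit interchange (uniform bounds from bounded $\omega,\theta$ and light tails, the $1+O(\Delta)$ estimate for the trailing factor, and dominated convergence via $h^\circ\leqslant C$), whereas the paper handles the sum term by first collapsing each inner integral to $\Delta\,h^\circ(k\Delta)$ and then taking the outer Riemann limit; the two orderings are equivalent.
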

\begin{proof}
Concerning the second term in the right-hand side of \eqref{Mme},  recognising Riemann sums, we readily obtain
\begin{align*}\bar h^\circ(T)\,
\lim_{\Delta\downarrow 0} \exp &\left(\sum^{T/\Delta}_{k=1} \Delta \omega(k\Delta) \right)\cdot \prod_{k=1}^{T/\Delta}\left(\varphi\left(\sum_{j=k}^{T/\Delta}\Delta\,\theta(j\Delta)\right)\right)^\Delta \\
&=\bar h^\circ(T)\,\lim_{\Delta\downarrow 0} \exp\left(
\sum_{k=1}^{T/\Delta}\Delta \left[\omega(k \Delta) + \log \varphi\left(\sum_{j=k}^{T/\Delta}\Delta\,\theta(j\Delta)\right)
 \right]\right)\\
&=\bar h^\circ(T)\,\exp\left(
\int_0^T \left[ \omega(s)+ \log \varphi\left( 
\int_s^T \theta(r)\,{\rm d}r
\right) \right]{\rm d}s\right)=\bar h^\circ(T)\,e^{\Psi_{\omega, \theta}(T)}.
    \end{align*}
    We continue by focusing on the first term in the right-hand side of \eqref{Mme}. We find, again recognising various Riemann sums, 
    \begin{align*}
&\lim_{\Delta\downarrow 0} {\mathbb E}\exp \left(\sum_{k=1}^{T/\Delta}
\Delta \left[ \omega(k\Delta)F_-(k\Delta)+\theta(k\Delta)G_-(k\Delta)  \right]\right) \\
&= \lim_{\Delta\downarrow 0} \sum_{k=1}^{T/\Delta} 
\int_{(k-1)\Delta}^{k\Delta} 
h^\circ (s) \varphi\left( \sum_{\ell=1}^{k-1}\Delta \theta(\ell \Delta) \right)^{s-(k-1)\Delta}\hspace{-3mm} {\rm d}s\,\exp \left( \sum_{\ell=1}^{k-1}\Delta \omega(\ell \Delta) \right) \prod_{\ell=1}^{k-1}\left(\varphi\left(\sum_{j=\ell}^{T/\Delta}\Delta\,\theta(j\Delta)\right)\right)^\Delta\\
&= \lim_{\Delta\downarrow 0} \sum_{k=1}^{T/\Delta} 
{\Delta} 
h^\circ (k\Delta)
\exp \left( \sum_{\ell=1}^{k-1}\Delta \left[ \omega(\ell \Delta) + \log \varphi\left(\sum_{j=\ell}^{T/\Delta}\Delta\,\theta(j\Delta)\right) \right] \right) \\
&= \int_0^T h^\circ (u) 
\,\exp\left(
\int_0^u \left[\omega(s)+ \log \varphi\left( 
\int_s^T \theta(r)\,{\rm d}r
\right) \right]{\rm d}s\right) {\rm d}u=\int_0^T h^\circ (u) 
\,e^{\Psi_{\omega, \theta}(u)} {\rm d}u.
\end{align*}
This completes the proof.
\end{proof}

The next step is to evaluate the mgf $M_{[0,T]}^+(\omega, \theta)$. We show that it can be expressed in terms of the objects
\[
\Phi_{\omega, \theta}(s):= \int_s^T  h(r-s)\,e^{\Psi_{\omega, \theta}(r)-
\Psi_{\omega, \theta}(s) }{\rm d}r\qquad \text{and} \qquad \bar\Phi_{\omega, \theta}(s):=\bar h(T-s)\,e^{\Psi_{\omega, \theta}(T)-
\Psi_{\omega, \theta}(s)}.
\]
\begin{lemma}
\label{L4} For $\theta\equiv \theta(\cdot)$ and $\omega\equiv \omega(\cdot)$,
\[M_{[0,T]}^+(\omega, \theta) = \exp\left(\lambda
\int_0^T \big(
\Phi_{\omega, \theta}(s)+\bar\Phi_{\omega, \theta}(s)-1
\big)\,{\rm d}s
\right).\]
\end{lemma}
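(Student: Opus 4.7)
Starting from Lemma \ref{L2}, under the mesh-refinement scaling \eqref{scaleDefs} (so $d = T/\Delta$ and $\delta_\ell = \Delta$), we have
\[
M^+_{\bs t}(\bs\omega,\bs\theta) \;=\; \exp\!\left(\lambda\,\Delta\sum_{\ell=1}^{T/\Delta}\bigl(\bar M^+_{\bs t,\ell}(\bs\omega,\bs\theta)-1\bigr)\right).
\]
So it suffices to show that the Riemann-type sum $\Delta\sum_\ell \bar M^+_{\bs t,\ell}$ converges to $\int_0^T\bigl(\Phi_{\omega,\theta}(s)+\bar\Phi_{\omega,\theta}(s)\bigr){\rm d}s$ as $\Delta\downarrow 0$, since $\Delta\sum_\ell 1 = T = \int_0^T 1\,{\rm d}s$. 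I would analyse each of the three pieces $\bar M^+_{\bs t,\ell,i}$ separately, identifying the pointwise limit at $s_\ell := (\ell-1)\Delta \to s$ and then passing the outer $\ell$-sum to the integral over $[0,T]$.

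For the first piece \eqref{Mbpl1}, the inner integral $\int_0^{\Delta-s}h(r)\,(\varphi(\Theta_\ell))^r{\rm d}r$ is $O(\Delta)$ (the factor $\varphi(\Theta_\ell)^r$ is uniformly bounded on compact intervals by continuity of $\log\varphi$ on a neighbourhood of $0$), so $\bar M^+_{\bs t,\ell,1} = O(\Delta)$ and hence $\Delta\sum_\ell \bar M^+_{\bs t,\ell,1} = O(\Delta)$ vanishes. For the third piece \eqref{Mbpl3}, the tail integral $\int_{T-t_{\ell-1}-s}^\infty h(r){\rm d}r$ converges to $\bar h(T-s_\ell)$; meanwhile $e^{\Omega_d-\Omega_{\ell-1}}\to e^{\Omega(T)-\Omega(s)}$ and $(\varphi(\Theta_\ell))^{\Delta-s}\prod_{m=\ell+1}^d(\varphi(\Theta_m))^\Delta \to \exp\bigl(\int_{s}^T\log\varphi(\Theta(u)){\rm d}u\bigr)$ by the same Riemann-sum recognition used in the proof of Lemma \ref{L3}. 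Combining these with $\Psi_{\omega,\theta}(T)-\Psi_{\omega,\theta}(s) = \Omega(T)-\Omega(s)+\int_s^T\log\varphi(\Theta(u)){\rm d}u$ gives $\bar M^+_{\bs t,\ell,3}\to \bar\Phi_{\omega,\theta}(s_\ell)$, so the $\ell$-Riemann sum converges to $\int_0^T\bar\Phi_{\omega,\theta}(s){\rm d}s$.

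The substantive piece is the second one \eqref{Mbpl2}. Here one averages over $s\in[0,\Delta]$ (which reduces to evaluation at $s_\ell$ up to $O(\Delta)$ error), and recognises the sum $\sum_{k=\ell}^{d-1}$ as a Riemann sum in the leaving-time variable $r$ ranging over the intervals $[t_k,t_{k+1})$, collectively covering $[s_\ell,T]$. For each such $k$, the product $(\varphi(\Theta_\ell))^{\Delta-s}\prod_{m=\ell+1}^k(\varphi(\Theta_m))^\Delta(\varphi(\Theta_{k+1}))^{r-(t_{k,\ell-1}-s)}$ telescopes (in the limit) into $\exp\bigl(\int_{s_\ell}^{r}\log\varphi(\Theta(u)){\rm d}u\bigr)$ by the same Riemann-sum argument as in Lemma \ref{L3}, and $e^{\Omega_k-\Omega_{\ell-1}}\to e^{\Omega(r)-\Omega(s_\ell)}$. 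Combining these factors produces $h(r-s_\ell)\,e^{\Psi_{\omega,\theta}(r)-\Psi_{\omega,\theta}(s_\ell)}$ under the $r$-integral, so $\bar M^+_{\bs t,\ell,2}\to\Phi_{\omega,\theta}(s_\ell)$ and the outer $\ell$-Riemann sum converges to $\int_0^T\Phi_{\omega,\theta}(s){\rm d}s$. Assembling the three contributions yields the claimed expression for $M^+_{[0,T]}(\omega,\theta)$.

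The main obstacle is the doubly nested Riemann-sum structure in piece \eqref{Mbpl2}: the $k$-sum over leaving subintervals and the telescoping product must be resolved simultaneously, and one must verify that the inner $s$-average over $[0,\Delta]$ and the pointwise limits can be interchanged with the outer $\ell$-sum. This requires uniform (in $\ell$, $k$) boundedness of $\varphi(\Theta_k)^{\Delta}$ and of the densities $h, h^\circ$ on compacts, which follows from the paper's standing assumptions (light-tailedness of claims, so $\log\varphi$ is smooth near the origin, and the bounded-density assumption on $h^\circ$; an analogous mild regularity on $h$ — or a direct dominated-convergence argument — handles the $\Phi_{\omega,\theta}$ term). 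Once uniform control is in place, the Riemann-sum convergence is routine and delivers the lemma.
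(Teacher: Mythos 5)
Your proposal follows essentially the same route as the paper's proof: start from Lemma \ref{L2} under the mesh scaling \eqref{scaleDefs}, split $\bar M^+_{\bs t,\ell}$ into the three pieces \eqref{Mbpl1}--\eqref{Mbpl3}, discard the first as $O(\Delta)$, and recognise the second and third as Riemann sums converging to $\int_0^T\Phi_{\omega,\theta}(s)\,{\rm d}s$ and $\int_0^T\bar\Phi_{\omega,\theta}(s)\,{\rm d}s$ respectively. Your remarks on uniform boundedness and the interchange of limits make explicit the ``standard argument'' the paper appeals to when replacing the inner $s$ by $0$, but this is a matter of detail rather than a different method.
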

\begin{proof}
We proceed in a similar manner as above (again working with the various quantities that were defined in \eqref{scaleDefs}). In this case, however, the expression for $M^+_{\bs t}(\bs\omega, \bs\theta)$, as was provided in Lemma~\ref{L2}, is considerably more complex. We first analyze the quantities \eqref{Mbpl1}, \eqref{Mbpl2}, and \eqref{Mbpl3} (under the parametrization given in~\eqref{scaleDefs})  \textcolor{black}{when} $\Delta$ is small. The results allow us to compute $M_{[0,T]}^+(\omega, \theta)$ using $\eqref{Mp}$ and \eqref{Mpl}. The starting point is that, with ${\bs t}$, ${\bs \theta}$ and ${\bs \omega}$ as in~\eqref{scaleDefs},
\[\bar M^+_{{\bs t}, \ell}(\bs\omega, \bs\theta) = \exp\left(\lambda\Delta \sum_{\ell=1}^{T/\Delta} 
\left(\bar M_{{\bs t},\ell}^+(\bs\omega, \bs\theta)-1
\right)
\right)
=
 \exp\left(\lambda\Delta \sum_{\ell=1}^{T/\Delta} 
\bar M_{{\bs t},\ell}^+(\bs\omega, \bs\theta)-\lambda T
\right)
.\]
Then recall that $\bar M_{{\bs t},\ell}^+(\bs\omega, \bs\theta)$ is the sum of \eqref{Mbpl1}, \eqref{Mbpl2}, and \eqref{Mbpl3}.

We start by considering the contribution due to \eqref{Mbpl1}. 
Recall that this corresponds to the case where a client arrives and leaves in the same time interval. When this time interval, $\Delta$, is becoming infinitely small, it is expected that this term does not play any role in the arguments to come. Indeed, it is elementary to show that 
\[\lambda\Delta \sum_{\ell=1}^{T/\Delta}
\bar M_{{\bs t},\ell,1}^+(\bs\omega, \bs\theta)=
O(\Delta)\] as $\Delta\downarrow 0$, 
which justifies leaving it out in the rest of the derivation.

Then focus on the contribution due to \eqref{Mbpl2}. This corresponds to the clients who have arrived in the time interval $[t_{\ell-1},t_\ell)$ and then leave before $T$. We note that the $s$ in \eqref{Mbpl2} lies between $0$ and $\Delta$, so that it can be argued  that \textcolor{black}{when} $\Delta$ gets small, we can replace it by $0$. This concretely means, with $d=T/\Delta$,
\begin{align*}\lim_{\Delta\downarrow 0}
\lambda\Delta &\sum_{\ell=1}^{T/\Delta}
\bar M_{{\bs t},\ell,2}^+(\bs\omega, \bs\theta)\\
&=
\lim_{\Delta\downarrow 0}
\lambda\Delta \sum_{\ell=1}^{T/\Delta}
\sum_{k=\ell}^{d-1}\int_{(k-\ell)\Delta}^{(k-\ell+1)\Delta} h(r)
\,e^{\Omega(Tk/d)-\Omega(T\ell/d)}\cdot
\left(\prod_{m=\ell}^k (\varphi(\Theta(Tm/d)))^{\Delta}\right){\rm d}r\\
&=
\lim_{\Delta\downarrow 0}
\lambda\Delta^2 \sum_{\ell=1}^{T/\Delta}
\sum_{k=\ell}^{T/\Delta}h((k-\ell)\Delta)
\,e^{\Omega(k\Delta)-\Omega(\ell\Delta)}\exp
\left(\Delta\sum_{m=\ell}^k \log \varphi(\Theta(m\Delta))\right)
.\end{align*}
Recognising various Riemann sums, we thus obtain
\begin{align*}\lim_{\Delta\downarrow 0}&
 \exp\left(\lambda\Delta \sum_{\ell=1}^{T/\Delta} 
\bar M_{{\bs t},\ell,2}^+(\bs\omega, \bs\theta)
\right) \\
&= \exp\left(
\lambda \int_0^T\int_s^T  h(r-s)\,e^{\Omega(r)-\Omega(s) }
\exp\left(\int_s^r \log\varphi(\Theta(u)){\rm d}u\right){\rm d}r\,{\rm d}s
\right)=\exp\left(\lambda \int_0^T \Phi_{\omega, \theta}(s)\,{\rm d}s\right),
\end{align*}
with
$\Phi_{\omega, \theta}(s):= \int_s^T  h(r-s)\,e^{\Psi_{\omega, \theta}(r)-
\Psi_{\omega, \theta}(s) }{\rm d}r.$

We conclude by analyzing the contribution due to \eqref{Mbpl3}, describing the impact of the clients who arrive in $[t_{\ell-1},t_{\ell})$ and remain in the system until time $T$.
Just as we did for the contribution due to \eqref{Mbpl2}, observe that the $s$ in \eqref{Mbpl3} lies between $0$ and $\Delta$; it again requires a standard argument to justify that \textcolor{black}{when} $\Delta\downarrow 0$ we can replace it by $0$. More concretely, with $d=T/\Delta$,
\[\lim_{\Delta\downarrow 0}
\lambda\Delta \sum_{\ell=1}^{T/\Delta}
\bar M_{{\bs t},\ell,3}^+(\bs\omega, \bs\theta)=
\lim_{\Delta\downarrow 0}
\lambda\Delta \sum_{\ell=1}^{T/\Delta}\int_{(d-\ell)\Delta}^\infty h(r)
\,e^{\Omega(T)-\Omega(T\ell/d)}\cdot
\left(\prod_{m=\ell+1}^d (\varphi(\Theta(Tm/d)))^{\Delta}\right){\rm d}r,
\]
which can be rewritten as
\[\lim_{\Delta\downarrow 0}
\lambda\Delta \sum_{\ell=1}^{T/\Delta}
\bar h(T-\ell\Delta) 
\,e^{\Omega(T)-\Omega(\ell\Delta)}\exp
\left(\Delta\sum_{m=\ell+1}^{T/\Delta} \log \varphi(\Theta(m\Delta))\right).\]
We thus conclude that
\begin{align*}\lim_{\Delta\downarrow 0}
 \exp\left(\lambda\Delta \sum_{\ell=1}^{T/\Delta} 
\bar M_{{\bs t},\ell,3}^+(\bs\omega, \bs\theta)
\right) &= \exp\left(
\lambda \int_0^T \bar h(T-s)\,e^{\Omega(T)-\Omega(s) }
\exp\left(\int_s^T \log\varphi(\Theta(r)){\rm d}r\right){\rm d}s
\right)\\
&=\exp\left(\lambda \int_0^T \bar\Phi_{\omega, \theta}(s)
{\rm d}s\right),
\end{align*}
with
$\bar\Phi_{\omega, \theta}(s)=\bar h(T-s)\,e^{\Psi_{\omega, \theta}(T)-
\Psi_{\omega, \theta}(s)}.$
\end{proof}

\subsubsection{Exponential tightness}\label{Sec:ExT}


To establish exponential tightness we rely on the approach that was developed in Feng and Kurtz \cite{FK}. With $X_n(t):=(F_n(t),G_n(t))$,
we first need a metric $r$ on $\mathbb{R}^2$. To this end, for $x=(x_1,x_2)$ and $y=(y_1,y_2)$ define
\[
r(x,y):=|x_1 - y_1| + |x_2-y_2|,
\]
and we let $q(x,y):= r(x,y)\wedge 1$. Let $D([0,\infty))$ be the c\`adl\`ag space in which the trajectories of $X_n(\cdot)$ are contained {and equip it with the Skorokhod topology}. In the sequel $\{\mathscr{F}^n_t\}_{0 \leqslant t \leqslant T}$ is a (naturally chosen) filtration that we  detail below.
{In this case} \cite[Theorem 4.1]{FK} implies the following:
\begin{quotation}
\emph{Suppose that
\begin{itemize}
\item[(A)]
$\{X_n(t) \}_{n \in {\mathbb N}}$ is exponentially tight for each $t \geqslant 0$ and
\item[(B)]
for each $T>0$, there exists random variables $\gamma_n(\delta, \alpha, T)$, satisfying
    \begin{equation}\label{tightnessC1}
    \mathbb{E} \left[ \left. e^{n\alpha q(X_n(t+u),X_n(t))} \right| {\mathscr F}_t^n \right] \leqslant \mathbb{E}\left[ \left. e^{\gamma_n(\delta, \alpha, T)} \right| {\mathscr F}_t^n  \right]
    \end{equation}
    for $0 \leqslant t \leqslant T$ and $0 \leqslant u \leqslant \delta$ such that for each $\alpha>0$,
    \begin{equation}\label{tightnessC2}
    \lim_{\delta \downarrow 0} \limsup_{n\to \infty} \frac{1}{n} \log \mathbb{E} \left[ e^{\gamma_n(\delta, \alpha,T)} \right]=0.
    \end{equation}
    \end{itemize}
 Then $\{X_n(\cdot)\}_{n\geqslant 0}$ is exponentially tight in $D[0,\infty)$.}
 \end{quotation}
%

Observe that, by Proposition \ref{P1} (i.e., the LDP pertaining to a single point in time), it follows that for each given value of $t \geqslant 0$ the sequence $\{ X_n(t)\}_{n \in{\mathbb N}}$ is {exponentially} tight, so that the  requirement (A) has been taken care of. Hence, to prove Theorem \ref{thm:main}, we have to verify requirement (B), i.e., Condition \eqref{tightnessC1} and Condition \eqref{tightnessC2}.

Before we verify Condition \eqref{tightnessC1} and Condition \eqref{tightnessC2}, we first discuss 
the filtration $\{{\mathscr F}_t^n\}_{0 \leqslant t \leqslant T}$. In view of the proofs to follow, we do so by  describing the information that is contained in ${\mathscr F}_t^n$.
Given ${\mathscr F}_t^n$ we in the first place know the time that each client arrives to the system up to time~$t$; we label these times as $\tau_1< \tau_2< \dots< \tau_{A_n(t)}$, with $A_n(t)$ the number of client arrivals until time~$t$. 
The $i$-th arrival is assigned a sojourn time $S_i$ and, given ${\mathscr F}_t^n$, we know whether $S_i \leqslant t-\tau_i$ and if this inequality holds then we know the precise value of $S_i$. 
Given ${\mathscr F}_t^n$ we in addition know whether or not each individual initially present has left the system and, if so, we know her specific residual sojourn time; the residual time of the $i$-th client is represented by $S^\circ_i$.
We also know the claim sizes and claim arrival times pertaining to all the claims that occurred in the time interval $(0,t]$.

{\it $\circ$~Step I: Construction of $\gamma_n(\delta, \alpha,T)$ so that Condition \eqref{tightnessC1} is met.} 
To apply \cite[Theorem 4.1]{FK}, the idea is to identify a random variable $\gamma_n(\delta, \alpha,T)$ that stochastically dominates 
\begin{align*}
n \alpha\, q\big((F_n(t+u), G_n(t+u)),\,&(F_n(t), G_n(t))\big)\\&\leqslant n \alpha\, (| F_n(t+u)- F_n(t)| \wedge 1) + n \alpha\, (| G_n(t+u)-G_n(t)| \wedge 1)
\end{align*}
for any $\alpha \geqslant 0$, $0 \leqslant u \leqslant \delta$, $0\leqslant t \leqslant T$, and ${\mathscr F}^n_t$.
To this end, we first find a stochastically dominating random variable for $| F_n(t+u)- F_n(t)| \wedge 1$. 
Recall that $\{ A_n(t): 0 \leqslant t \leqslant T \}$ denotes the arrival process of the clients, i.e.,  a Poisson process with rate $\lambda n$.
Observe that the change in the number of clients in the system between times $t$ and $t+u$ (with $u\in[0,\delta]$) is dominated by the number of clients who arrived in $(t,t+\delta)$ plus the number of clients that were served in this time interval. 
Now let $\bar A_n(\delta) \sim {\rm {\mathbb P}oi}(n \lambda \delta)$, and let this random quantity be independent of the client arrival process $A_n(\cdot)$. Note that, because $u \in[0,\delta]$, $\bar A_n(\delta)$ stochastically dominates the number of clients who arrive in the interval $(t,t+u)$ given ${\mathscr F}^n_t$. 
In addition, note that the number of clients who leave the system between times $t$ and $t+u$ is dominated by the number that left in $(t,t+\delta)$, which is given by 
\begin{equation}\label{Gme1}
V_n(\delta,t):=\sum_{i=1}^{nf_0} \boldsymbol 1\{ S_i^\circ \in [t,t+\delta]\}+\int_0^t {\rm d} u \sum_{i=1}^{A(t)} \boldsymbol 1 \{ \tau_i=u\} \boldsymbol  1 \{ S_i \in [t-u,t-u+\delta] \}.
\end{equation}
Note that \eqref{Gme1} is a function of $t$, whereas the dominating random variable that we must construct $\gamma_n(\delta, \alpha, T)$ should not depend on $t$. We thus dominate $\eqref{Gme1}$ by 
$
\sup_{t \in [0,T]}V_n(\delta,t)$
to obtain a bound that is uniform in $t\in[0,T]$.
Taking into account both arrivals and departures, we then have, for any ${\mathscr F}_t^n$, 
\begin{align*}
| F_n(t+u)- F_n(t)| \wedge 1  &\stackrel{\rm st}{\leqslant} \frac{\bar A_n(\delta)+\sup_{t \in [0,T]}V_n(\delta,t)}{n}\wedge 1 \\
 &\stackrel{\rm st}{\leqslant}
\beta^{(1,1)}_n(\delta, T)+ \beta^{(1,2)}_n(\delta, T)+ \beta^{(1,3)}_n(\delta, T) =: \beta^{(1)}_n(\delta, T),
\end{align*}
where we define
\begin{align*}
\beta^{(1,1)}_n(\delta, T)&:= \frac{\bar A_n(\delta)}{n},\:\:\:\:\:
\beta^{(1,2)}_n(\delta, T):=\frac{1}{n}
\sup_{t \in [0,T]}\left\{ \sum_{i=1}^{nf_0} \boldsymbol 1\{ S_i^\circ \in [t,t+\delta]\} \right\}, \\
\beta^{(1,3)}_n(\delta, T)&:= \frac{1}{n}\left( \sup_{t \in [0,T]} \left\{\int_0^t {\rm d} u \sum_{i=1}^{A(T)} \boldsymbol 1 \{ \tau_i=u\} \boldsymbol  1 \{ S_i \in [t-u,t-u+\delta] \}\right\} \wedge n \right).
\end{align*}

Now that we have succeeded in identifying a stochastically dominating random variable for the first component $| F_n(t+u)- F_n(t)| \wedge 1$, 
we proceed by identifying a stochastically dominating random variable for the second component $| G_n(t+u)- G_n(t)| \wedge 1$. The change in the net aggregate claim process between times $t$ and $t+u$ is dominated by the premiums paid by the clients in this time interval plus the claims made by the clients in this time interval. 
Recalling that $u\in[0,\delta]$, the premiums paid by the clients between times $t$ and $t+u$ is dominated by $r\eta_n(\delta,T)$ with $\eta_n(\delta,T):= \delta (n f_0 + A_n(T))$, and the sum of the claims made by the clients between times $t$ and $t+u$ is dominated by
\begin{equation}
    \label{eq:defy}
\bar Y_n(\delta, T):=
\sum_{i=1}^{\bar A_n(\delta,T)} Y_i, 
\end{equation}
where $\bar A_n(\delta,T) \sim {\rm {\mathbb P}oi}(\nu \eta_n(\delta,T))$ and is conditionally independent of everything else given $A_n(T)$, and the $Y_i$ are iid random variables with mgf $\beta(\cdot)$. 
Hence, for any ${\mathscr F}_t^n$,
\[
| G_n(t+u)- G_n(t)| \wedge 1 \stackrel{\rm st}{\leqslant}   \frac{1}{n}\left(r \delta (n f_0 + A_n(T))+\bar Y_n(\delta, T)\right)\wedge 1 =: \beta^{(2)}_n(\delta, T).
\]
From the above we conclude that Condition \eqref{tightnessC1} is satisfied if we let 
\[
\gamma_n(\delta,\alpha,T) := \alpha n \left( \beta^{(1)}_n(\delta, T)+\beta^{(2)}_n(\delta, T)\right).
\]
We conclude that we have constructed a random quantity  $\gamma_n(\delta,\alpha,T)$ so that Condition \eqref{tightnessC1} is met.

{\it $\circ$~Step II: Verifying that $\gamma_n(\delta, \alpha,T)$ is so that Condition \eqref{tightnessC2} is met.} 
Now we need to verify Condition \eqref{tightnessC2}, i.e., we need to show that, for the constructed $\gamma_n(\delta,\alpha,T)$, and for any $\alpha > 0$,
\[
\lim_{\delta \downarrow 0} \limsup_{n \to \infty} \frac{1}{n} \log \mathbb{E} \left[ e^{\gamma_n(\delta, \alpha,T)} \right]=0.
\]
To this end, first observe that by H\"{o}lder's inequality
\[
\frac{1}{n} \log \mathbb{E} \left[ e^{\gamma_n(\delta, \alpha,T)} \right] \leqslant \frac{1}{2n} \log \mathbb{E} \left[ e^{2\alpha\, n \beta^{(1)}_n(\delta, T)} \right]+ \frac{1}{2n} \log \mathbb{E} \left[ e^{2\alpha \,n \beta^{(2)}_n(\delta, T)} \right].
\]
Hence to verify \eqref{tightnessC2} we can separately treat each term in the right-hand side of the previous display. 
We start by establishing 
\begin{equation}\label{CB1}
\lim_{\delta \downarrow 0} \limsup_{n \to \infty} \frac{1}{n} \log \mathbb{E} \left[ e^{2 \alpha n \beta^{(1)}_n(\delta, T)} \right]=0.
\end{equation}
Because $\beta_n^{(1,1)}(\delta, T)$, $\beta_n^{(1,2)}(\delta, T)$, and $\beta_n^{(1,3)}(\delta, T)$ are independent, \eqref{CB1} follows from the following lemma.
\begin{lemma} \label{LEM1} For $i=1,2,3$ it holds that
\begin{equation}\label{CB11}
\lim_{\delta \downarrow 0} \limsup_{n \to \infty} \frac{1}{n} \log \mathbb{E} \left[ e^{2 \alpha n \beta^{(1,i)}_n(\delta, T)} \right]=0.
\end{equation}
\end{lemma}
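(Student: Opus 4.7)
I would handle the three cases separately, since the argument for $i=1$ is essentially a direct mgf computation while $i=2$ and $i=3$ require a covering/union-bound argument to deal with the supremum over $t$.

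For $i=1$: since $\bar A_n(\delta)\sim\text{Poi}(n\lambda\delta)$, the Poisson mgf gives
\[
\frac{1}{n}\log \mathbb{E}\!\left[e^{2\alpha n\beta^{(1,1)}_n(\delta,T)}\right]=\frac{1}{n}\log\mathbb{E}\!\left[e^{2\alpha\bar A_n(\delta)}\right]=\lambda\delta(e^{2\alpha}-1),
\]
which vanishes as $\delta\downarrow 0$.

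For $i=2$: partition $[0,T+\delta]$ into $N=\lceil (T+\delta)/\delta\rceil$ consecutive intervals $I_1,\ldots,I_N$ of length $\delta$. Any window $[t,t+\delta]$ with $t\in[0,T]$ is contained in the union of at most two consecutive $I_k$, so
\[
\sup_{t\in[0,T]}\sum_{i=1}^{nf_0}\mathbf 1\{S_i^\circ\in[t,t+\delta]\}\leqslant 2\max_{1\leqslant k\leqslant N}M_k,\qquad M_k:=\sum_{i=1}^{nf_0}\mathbf 1\{S_i^\circ\in I_k\}.
\]
Because $h^\circ$ is uniformly bounded by $C$, each $M_k$ is a sum of $nf_0$ iid Bernoullis with success probability at most $C\delta$, so $\mathbb{E}[e^{4\alpha M_k}]\leqslant \exp\!\big(nf_0C\delta(e^{4\alpha}-1)\big)$. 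A union bound on $\max_k M_k$ yields
\[
\tfrac{1}{n}\log\mathbb{E}\!\left[e^{2\alpha n\beta^{(1,2)}_n(\delta,T)}\right]\leqslant \tfrac{\log N}{n}+f_0C\delta(e^{4\alpha}-1),
\]
which tends to $0$ after letting $n\to\infty$ and then $\delta\downarrow 0$.

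For $i=3$: first observe that the inner integral collapses to the number of clients that both arrived by time $t$ and depart in $[t,t+\delta]$, i.e.\ $|\{i\leqslant A_n(T):\tau_i+S_i\in[t,t+\delta]\}|$. Using the same $\delta$-cover as in case $i=2$, I would bound this supremum by $2\max_k N_k$ with $N_k:=|\{i\leqslant A_n(T):\tau_i+S_i\in I_k\}|$. The marked process $\{(\tau_i,S_i):\tau_i\leqslant T\}$ is a Poisson point process on $[0,T]\times[0,\infty)$ with intensity $n\lambda\,{\rm d}t\,h(s)\,{\rm d}s$, so by the mapping theorem $N_k$ is Poisson with mean
\[
n\lambda\int_0^T\!\!\int_{(I_k-u)\cap[0,\infty)}\!h(s)\,{\rm d}s\,{\rm d}u \;\leqslant\; n\lambda\int_0^\infty h(s)\,\delta\,{\rm d}s\;=\;n\lambda\delta,
\]
after exchanging the order of integration and using that for each fixed $s$ the Lebesgue measure of $\{u\in[0,T]:u+s\in I_k\}$ is at most $\delta$. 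A union bound then gives
\[
\tfrac{1}{n}\log\mathbb{E}\!\left[e^{2\alpha n\beta^{(1,3)}_n(\delta,T)}\right]\leqslant \tfrac{\log N}{n}+\lambda\delta(e^{4\alpha}-1)\xrightarrow[n\to\infty,\delta\downarrow 0]{}0.
\]
The minimum with $n$ in the definition of $\beta^{(1,3)}_n$ only strengthens the bound and can be dropped. The main conceptual obstacle is case $i=3$: one needs to recognise that the departure-counting functional has the Poisson-projection structure described above so that the density-free bound $\mathbb{E}[N_k]\leqslant n\lambda\delta$ is available (we only have a uniform bound on $h^\circ$, not on $h$), after which the covering argument proceeds in parallel with case $i=2$.
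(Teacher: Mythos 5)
Your proof is correct and establishes Lemma~\ref{LEM1}, but it takes a genuinely different (and arguably more elementary) route than the paper for both $i=2$ and $i=3$. For $i=2$, the paper stochastically dominates each grid term by a Binomial $B\sim\mathrm{Bin}(nf_0, 2C\delta)$, applies a Chernoff-type concentration bound (McDiarmid's Theorem~2.3), and then splits the expectation according to whether $B$ exceeds $nf_0(2C\delta+\varepsilon)$, finally sending $\varepsilon\downarrow 0$; you instead skip the large-deviation inequality entirely by computing the Bernoulli/Binomial mgf directly and applying the crude but effective bound $e^{\max_k}\leqslant\sum_k$, so the grid size only contributes $(\log N)/n\to 0$. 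For $i=3$, the paper dominates the departure process of arriving clients by the stationary departure process of an M/G/$\infty$ queue and invokes the output theorem that the latter is Poisson of rate $\lambda n$ (a coupling-plus-known-theorem argument), whereas you observe that the marked arrivals form a Poisson point process on $[0,T]\times[0,\infty)$ and use the mapping theorem to conclude that the departure count in each fixed interval is exactly Poisson with mean at most $n\lambda\delta$ (after a Fubini exchange); this avoids both the stationarity coupling and the M/G/$\infty$ output theorem, and then you close with the same $e^{\max}\leqslant\sum$ union bound. Both approaches land in the same place; yours trades the paper's reliance on Chernoff/Bennett inequalities and the M/G/$\infty$ output theorem for a direct mgf-plus-union-bound computation, and avoids the auxiliary $\varepsilon$-parameter. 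One small bookkeeping point you handled correctly: the $\wedge\, n$ in $\beta^{(1,3)}_n$ can indeed be dropped, and the two-consecutive-intervals cover is sufficient because $[t,t+\delta]$ has length exactly $\delta$.
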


{\it Proof:} We treat $i=1, 2,$ and $3$ separately. 
For $i=1$ we  use the known expression for the Poisson mgf, so as to obtain
\begin{equation}\label{CCB1}
\frac{1}{n}\log\mathbb{E} \left[ e^{2 \alpha n \beta^{(1,1)}_n(\delta, T)}  \right] =\frac{1}{n}\log\mathbb{E} \left[ e^{2\alpha \bar A_n(\delta)} \right] = \lambda \delta (e^{2\alpha} -1) \to 0, \quad \text{as }\delta\downarrow 0.
\end{equation}
For $i=2$ we first observe that 
\[
\sup_{t \in [0,T]} \sum_{i=1}^{nf_0} \bs 1 \{S^\circ_i \in [t, t+\delta] \} \leqslant \max_{k \in \{0, 1, \dots, T/\delta\}} \sum_{i=1}^{n f_0}  \bs 1 \{S^\circ_i \in [k\delta, k\delta+2\delta] \}.
\]
Recall that it was assumed that the density $h^\circ(\cdot)$ of the residual sojourn times $S^\circ_i$ is uniformly bounded by some finite constant $C$. 
Consequently, for any $k$, \[
\sum_{i=1}^{n f_0}  \bs 1 \{S^\circ_i [k\delta, k\delta+2\delta] \}\stackrel{\rm st}{\leqslant}B\sim {\mathbb B}{\rm in}(n f_0, 2 C \delta)\] 
(where $\delta$ is sufficiently small to guarantee  $2C\delta<1$). 
By \cite[Theorem 2.3]{McD98}, which is effectively a Chernoff inequality, we have
\[
\mathbb{P}(B \geqslant nf_0 (2C\delta + \varepsilon)) \leqslant \exp\left\{-nf_0\left((2C\delta+\varepsilon)\log\left(1+\frac{\varepsilon}{2C\delta}\right)-\varepsilon\right)\right\}
\]
 for any $\varepsilon>0$.
 Upon combining the above bound, we thus conclude that
 \begin{align*}
 \mathbb{E} \left[ e^{2 \alpha n \beta^{(1,2)}_n(\delta, T)} \right] &\leqslant \mathbb{E} \left( \exp \left\{ 2\alpha  \max_{k \in \{0, 1, \dots, T/\delta\}} \sum_{i=1}^{n f_0}  \bs 1 \{S^\circ_i \in [k\delta, k\delta+2\delta] \} \right\}  \right) \\
 &\leqslant e^{2\alpha n f_0 (2C\delta +\varepsilon)}+ e^{2\alpha n f_0}\frac{T}{\delta} \mathbb{P}(B \geqslant n f_0(2C\delta+\varepsilon)) \\
 &\leqslant e^{2\alpha n f_0 (2C\delta +\varepsilon)} +e^{2\alpha n f_0}\frac{T}{\delta}\exp\left\{-nf_0\left((2C\delta+\varepsilon)\log\left(1+\frac{\varepsilon}{2C\delta}\right)-\varepsilon\right)\right\} \\
 &\leqslant 2 \max \left\{ e^{2\alpha n f_0 (2C\delta +\varepsilon)}, e^{2\alpha n f_0}\frac{T}{\delta} \exp\left\{-nf_0\varepsilon \left(\log\left(1+\frac{\varepsilon}{2C\delta}\right)-1\right)\right\}  \right\};
 \end{align*}
 in the second inequality we distinguish between the contributions of the events $\{B < n f_0(2C\delta+\varepsilon)\}$ and $\{B \geqslant n f_0(2C\delta+\varepsilon)\}$, respectively.
Consequently,
\begin{align*}
&\lim_{\delta \downarrow  0} \limsup_{n \to \infty} \frac{1}{n} \log \mathbb{E} \left[ e^{2 \alpha n \beta^{(1,2)}_n(\delta, T)}\right] \\
&\qquad  \leqslant \lim_{\delta \downarrow  0} \max \left\{ 2 \alpha f_0(2C\delta+\varepsilon), \; 2 \alpha f_0 - f_0 \varepsilon  \left(\log\left(1+\frac{\varepsilon}{2C\delta}\right)-1\right) \right\} =2\alpha f_0 \varepsilon.
\end{align*}
where in the final step we observe that for any $\varepsilon >0$ the second term in the maximum converges to $-\infty$ as $\delta \downarrow  0$. Since $\varepsilon$ is an arbitrary constant, we obtain \eqref{CB11} for $i=2$ by taking $\varepsilon \downarrow 0$.

We conclude with the analysis corresponding to $i=3$. We let $A^\star _n(\cdot)$ be a sequence of Poisson processes on $\mathbb{R}$ with intensity $\lambda n$ which are independent of everything else, and for $b <c$ let $A_n^\star [b,c]$ denote the number of points contained in the interval $(b,c)$. Observe that
\begin{align}
    \sup_{t \in [0,T]} \left\{ \int^t_0 {\rm d}u \sum_{i=1}^{A_n(t)} \bs{1} \{\tau_i=u\} \bs 1 \{S_i \in [t-u, t-u+\delta] \} \right\} &\stackrel{\rm st}{\leqslant} \sup_{t \in [0,T]}   A^\star _n[t,t+\delta]  \label{PPP} \\
    &\leqslant \max_{k \in \{0, 1, \dots, T/\delta\}}  A_n^\star[k\delta, k\delta+ 2 \delta], \nonumber
\end{align}
where for simplicity we assume that $T/\delta$ is an integer.
To understand the validity of \eqref{PPP}, observe that the departure process of clients
{when there are initially no clients present is dominated by the departure process of clients when there are initially a stationary number of clients present. 
Equation \eqref{PPP} then follows by from the known property that the latter is a Poisson process with intensity $\lambda n$.}

If $Z$ is a Poisson random variable with mean $a$ we obtain (via a Chernoff bound; see for instance \cite[Example 7.3]{Ros11}) that
\[
\mathbb{P}( Z - a \geqslant x) \leqslant \exp \left\{ -x \left( \log \left(1+\frac{x}{a} \right)-1 \right)-a \log \left(1 + \frac{x}{a}\right) \right\} .
\]
Thus,
\begin{equation*}
    \mathbb{P}(A^\star _n[0, 2 \delta] \geqslant n(\varepsilon+2\lambda\delta)) \leqslant \exp \left\{ -n \varepsilon \left( \log \left(1+\frac{\varepsilon}{2 \lambda \delta} \right)-1 \right)-2n \lambda \delta \log \left(1+\frac{\varepsilon}{2 \lambda \delta}\right) \right\} .
\end{equation*}
Consequently, for any $\varepsilon >0$,
\begin{align}
     \mathbb{E} \left[ e^{2 \alpha n \beta^{(1,3)}_n(\delta, T)} \right] &\leqslant \mathbb{E} \left( \exp\left\{2\alpha \max_{k \in \{0,1, \dots, T/\delta\}} \left\{ A^\star _n[k\delta,k\delta+ 2 \delta] \right\} \wedge n \right\} \right) \nonumber \\
    &\leqslant e^{2\alpha n(\varepsilon+2\lambda \delta)} + e^{2\alpha n} \frac{T}{\delta} \mathbb{P} ( A^\star _n[0,2 \delta] \geqslant n(\varepsilon+2\lambda \delta)) \nonumber \\  
    &\leqslant  e^{2\alpha n(\varepsilon+2\lambda \delta)} + e^{2\alpha n}\frac{T}{\delta} \exp \left\{ -n \varepsilon \left( \log \left(1+\frac{\varepsilon}{2 \lambda \delta} \right)-1 \right)-2n \lambda \delta \log \left(1+\frac{\varepsilon}{2 \lambda \delta}\right) \right\} \nonumber  \\
    &\leqslant 2 \max \left\{ e^{2\alpha n (\varepsilon+2\lambda \delta)}, e^{2\alpha n}\frac{T}{\delta} \exp \left\{ -n \varepsilon \left( \log \left(1+\frac{\varepsilon}{2 \lambda \delta} \right)-1 \right) \right\} \right\}. \label{CCB2} 
\end{align}
We then obtain
\begin{align*}
\lim_{\delta \downarrow  0} \limsup_{n \to \infty} \frac{1}{n} \log \mathbb{E} \left[ e^{2 \alpha n \beta^{(1,3)}_n(\delta, T)}\right]  &\leqslant \lim_{\delta \downarrow  0}  \max\left\{ 2\alpha(\varepsilon + 2 \lambda \delta), \; 2\alpha -  \varepsilon \left( \log \left( 1 + \frac{\varepsilon}{2 \lambda \delta} \right) -1 \right) \right\}  \\
&=2 \alpha \varepsilon.
\end{align*}
Since $\varepsilon$ is an arbitrary constant, we have obtained \eqref{CB11} for $i=3$ by taking $\varepsilon \downarrow 0$. \hfill$\Box$



We continue by analysing the contribution corresponding to $\beta^{(2)}_n(\delta, T)$.

\begin{lemma} \label{LEM2} It holds that
\begin{equation}\label{CB2}
\lim_{\delta \downarrow  0} \limsup_{n \to \infty} \frac{1}{n} \log \mathbb{E} \left[ e^{2 \alpha n \beta^{(2)}_n(\delta, T)} \right]=0.
\end{equation}
\end{lemma}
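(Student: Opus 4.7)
The plan is to reduce the moment generating function in question to one that can be computed explicitly using the Poisson and compound-Poisson structure of $A_n(T)$ and $\bar Y_n(\delta,T)$. The key observation is that $x\wedge 1\leqslant x$ for $x\geqslant 0$, whence
\[
2\alpha n\,\beta^{(2)}_n(\delta,T) \leqslant 2\alpha\Big(r\delta(nf_0+A_n(T)) + \bar Y_n(\delta,T)\Big),
\]
and by monotonicity of the exponential it suffices to prove
\[
\lim_{\delta\downarrow 0}\limsup_{n\to\infty}\frac{1}{n}\log\mathbb{E}\!\left[\exp\!\Big(2\alpha r\delta(nf_0+A_n(T)) + 2\alpha\,\bar Y_n(\delta,T)\Big)\right]=0.
\]

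To evaluate the latter mgf I would condition on $A_n(T)$: given $A_n(T)$, the random sum $\bar Y_n(\delta,T)=\sum_{i=1}^{\bar A_n(\delta,T)}Y_i$ defined in \eqref{eq:defy} is compound-Poisson with rate $\nu\delta(nf_0+A_n(T))$ and jump mgf $\beta(\cdot)$, so
\[
\mathbb{E}\!\left[e^{2\alpha\bar Y_n(\delta,T)}\,\big|\,A_n(T)\right] = \exp\!\Big(\nu\delta(nf_0+A_n(T))(\beta(2\alpha)-1)\Big),
\]
provided $\beta(2\alpha)<\infty$. Factoring out the deterministic part, combining with $A_n(T)\sim\mathrm{Poi}(\lambda nT)$, and using the Poisson mgf once more yields the closed form
\[
\mathbb{E}\!\left[e^{2\alpha n\beta^{(2)}_n(\delta,T)}\right] \leqslant \exp\!\Big(n\big[f_0\,c(\delta) + \lambda T(e^{c(\delta)}-1)\big]\Big),\qquad c(\delta):=2\alpha r\delta+\nu\delta(\beta(2\alpha)-1).
\]
Since $c(\delta)\to 0$ as $\delta\downarrow 0$, the bracket also tends to $0$, and dividing by $n$ delivers the claim.

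The main obstacle is the case $\beta(2\alpha)=\infty$, which is consistent with the standing assumption that $\beta(\cdot)$ need only be finite in a neighbourhood of the origin. Here the wedge with $1$ in $\beta^{(2)}_n(\delta,T)$ must be used in an essential way. I would truncate each claim at a slowly divergent cutoff $K(\delta)$, for instance $K(\delta):=-(\log\delta)/(4\alpha)$, so that the truncated mgf obeys $\mathbb{E}[e^{2\alpha(Y\wedge K(\delta))}]-1\leqslant e^{2\alpha K(\delta)}=\delta^{-1/2}$, and split the expectation according to whether all claims up to time $T$ respect the cutoff. On the truncated event the argument of the preceding paragraph applies with $\beta(2\alpha)$ replaced by the bounded quantity $\mathbb{E}[e^{2\alpha(Y\wedge K(\delta))}]$, producing the negligible contribution $\delta\cdot\delta^{-1/2}=\sqrt{\delta}$ in the log; on the complementary event the wedge caps the exponent at $e^{2\alpha n}$, while a union bound against the $O(n\delta)$ claims combined with Chernoff applied at a $\theta$ inside the neighbourhood where $\beta$ is finite makes the probability exponentially small in $n$, so that this product is also $o(n)$ after taking the log.
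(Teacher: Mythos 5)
Your opening computation is correct and, in fact, cleaner than the paper's in the special case where $\beta(2\alpha)<\infty$: dropping the wedge, evaluating the compound-Poisson mgf conditionally on $A_n(T)$, then the Poisson mgf of $A_n(T)$, gives the closed form $\exp\{n[f_0c(\delta)+\lambda T(e^{c(\delta)}-1)]\}$ with $c(\delta)\to 0$, and the claim follows directly. The paper does not exploit this shortcut and instead goes through a case decomposition and Cram\'er estimates from the start.

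However, your fix for the regime $\beta(2\alpha)=\infty$ has a genuine gap, precisely in the estimate on the complementary event. With an $n$-independent cutoff $K(\delta)$, the probability that a \emph{single} claim exceeds $K(\delta)$ is a fixed positive number $p(\delta)$, and the number of claims $\bar A_n(\delta,T)$ has mean of order $n\delta$. Hence the union bound yields $\mathbb{P}(E^c)\leqslant C n\delta\,p(\delta)$, which \emph{grows} in $n$ rather than decaying; indeed, the Poisson structure gives $\mathbb{P}(E^c)=1-e^{-\Theta(n\delta p(\delta))}\to 1$ as $n\to\infty$ for fixed $\delta$. The bound $e^{2\alpha n}\mathbb{P}(E^c)$ therefore contributes roughly $2\alpha$ to $\limsup_n \frac1n\log$, a constant that does not vanish as $\delta\downarrow 0$. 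Letting $K$ grow with $n$ does not rescue the argument either, because when $2\alpha$ lies outside the domain of finiteness of $\beta(\cdot)$, $\mathbb{E}[e^{2\alpha(Y\wedge K)}]\to\infty$ as $K\to\infty$, which destroys the truncated-event estimate.

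The paper circumvents this by placing the threshold on the \emph{collective} sum $\bar Y_n(\delta,T)$ at level $n\delta K'$, not on individual claims. The event $\{\bar Y_n(\delta,T)>n\delta K'\}$ is exponentially rare in $n$ by Cram\'er's theorem (it is a sum of $n$ iid compound-Poisson pieces, each of small mean $\sim\delta\nu(f_0+K)$), and the associated rate can be made to exceed $2\alpha$ by sending $\delta\downarrow 0$ (or choosing $K'$ large). This is what allows the $e^{2\alpha n}$ cap supplied by the wedge to be absorbed, and is the essential mechanism missing from your truncation argument.
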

{\it Proof:}
We distinguish between two cases: $A_n(T) \leqslant nK$ and $A_n(T) > nK$, where $K$ is an arbitrary constant that we will select later to suit our purposes. In addition, we consider two sub-cases when $A_n(T) \leqslant nK$: when $\bar Y_n(\delta,T) \leqslant nK'$ and when $\bar Y_n(\delta,T) > nK'$, where $\bar Y_n(\delta,T) $ is as defined in \eqref{eq:defy}, and $K'$ denoting  an arbitrary constant.  We have
\begin{align}
\mathbb{E} \left[ e^{2 \alpha n \beta^{(2)}_n(\delta, T)} \right] &\leqslant \mathbb{E} \left[ e^{2 \alpha n \beta^{(2)}_n(\delta, T)} \,\big|\, A_n(T) \leqslant nK \right] + e^{2 \alpha n} \mathbb{P}( A_n(T) > n K) \nonumber \\
&\leqslant \mathbb{E} \left[  e^{2 \alpha n \beta^{(2)}_n(\delta, T)} \,\big| \, A_n(T) \leqslant n K, \bar Y_n(\delta,T) \leqslant n  \delta K'\right] \nonumber \\
&\qquad + e^{2 \alpha n} \mathbb{P} \left( \bar Y_n(\delta,T) > n \delta K' \, \big| \, A_n(T) \leqslant n K\right) + e^{2 \alpha n} \mathbb{P}( A_n(T) > n K). \nonumber \\
&\leqslant B_1(\delta,K,K') + B_2(\delta,K,K')+B_3(\delta,K), \nonumber 
\end{align}
where 
\begin{align*}
    B_1(\delta,K,K') &:=e^{2\alpha n \delta( r f_0 + r K + K')},\\
    B_2(\delta,K,K') &:=e^{2 \alpha n} \mathbb{P} \left( \bar Y_n(\delta,T) > n \delta K' \,\big|\, A_n(T)= n K\right),\\
    B_3(\delta,K) &:= e^{2 \alpha n} \mathbb{P}( A_n(T) > n K).
\end{align*}

To verify \eqref{tightnessC2} we deal with each terms $B_1(\delta,K,K')$, $B_2(\delta,K,K')$,  and $B_3(\delta,K)$ separately. The first term is straightforward:
clearly,
\[
\lim_{\delta \downarrow  0} \limsup_{n \to \infty} \frac{1}{n} \log B_1(\delta,K,K_2) = 0
\]
for any choice of the parameter values.
Also the third term, $B_3(\delta, K,K')$, can be dealt with in a direct fashion, relying on Cram\'er's theorem for the sum of independent Poisson random variables. In particular, we use the fact that if $\mathbb{E}(A_n(T)/n)\equiv\lambda T<K$, then
\[
\lim_{n \to \infty} \frac{1}{n} \log \mathbb{P}(A_n(T) \geqslant nK) = -K \log \frac{K}{\lambda T} + K - \lambda. 
\]
We now have 
\[
 \lim_{\delta \downarrow  0} \limsup_{n \to \infty} \frac{1}{n} \log B_3(\delta, K) =2\alpha -K \log \frac{K}{\lambda T}+ K - \lambda.
\]
For any $\alpha>0$ we can choose $K$ large enough to ensure that this terms is negative.

Finally, we analyze the second term $B_2(\delta, K,K')$, again applying Cram\'er's theorem.
First observe that under the condition $A_n(T)=nK$, $\bar A_n(\delta,T)$ has a Poisson distribution with mean $n \delta \nu (f_0 + K)$. Using the thinning property of a Poisson process, we see that $\bar Y_n(\delta,T)$ has the same distribution as 
\[
\sum_{j=1}^{n} \sum_{i=1}^{Z_j} Y_{i,j}
\]
where the $\{Y_{i,j} \}_{i,j\in{\mathbb N}}$ are independent random variables with mgf $\beta(\cdot)$, and $\{ Z_j \}_{j \in{\mathbb N}}$ are independent Poisson random variables with mean $\delta \nu (f_0 + K)$. For any $j \in{\mathbb N}$,  
\begin{equation}\label{mgftig}
{\mathbb E}\left[e^{\theta\sum_{i=1}^{Z_j} Y_{i,j} }\right]=
\exp \left\{ \delta \nu (f_0 + K) ( \beta (\theta)-1) \right\} :=J_\delta(\theta,K).
\end{equation}
Applying Cram\'er's theorem for sums of iid random variables, we obtain
\begin{align*}
\lim_{n \to \infty}\frac{1}{n} \log \mathbb{P} \left(  \bar Y_n(\delta,T)> n \delta K' \,\big|\, A_n(T)= n K\right) &= \lim_{n \to \infty} \frac{1}{n} \log \mathbb{P} \left( \frac{1}{n \delta}\sum_{j=1}^{n} \sum_{i=1}^{Z_j} Y_{i,j} > K' \right) \\
&=-I_\delta(K,K') ,
\end{align*}
where $I_\delta(K,K'):= \sup_{\theta \in \mathbb{R}} ( \theta K'  - \log J_\delta(\theta,K) )$
is the Legendre transform of \eqref{mgftig}. Now note that $I_\delta(K,K') \to  \infty$ as $\delta \downarrow  0$ for any $K, K' >0$.
We thus conclude
\[ 
\lim_{\delta \downarrow  0} \limsup_{n \to \infty} \frac{1}{n} \log B_{2}(\delta, K,K') = \lim_{\delta \downarrow  0} \left[ 2 \alpha - I_\delta(K,K') \right]= -\infty.
\]

Consequently, for any $\alpha >0$ we can choose $K$ and $K'$ such that 
\begin{align*}
\lim_{\delta \downarrow  0} &\limsup_{n \to \infty} \frac{1}{n} \log \mathbb{E} \left[ e^{2 \alpha n \beta^{(2)}_n(\delta, T)}\right] \\
&\leqslant  \lim_{\delta \downarrow  0} \limsup_{n \to \infty} \frac{1}{n} \log \big(3 \max \left\{ B_1(\delta, K, K'), B_2(\delta, K, K'), B_3(\delta, K) \right\} \big)=0.
\end{align*}
We have thus verified  the claim.
\hfill$\Box$

\vb

Lemmas \ref{LEM1} and \ref{LEM2} entail that we have verified Condition \eqref{tightnessC2}. As we had already verified Condition \eqref{tightnessC1}, we have finished the proof of Theorem \ref{thm:main}.

Now that we have proven that the bivariate process $(\bar F_n(\cdot), \bar G_n(\cdot))$ is exponentially tight, we finish this subsection by  showing (i-a).
\begin{lemma}
We have $I^{\mathcal{X}}_{[0,T]}(f,g)=\infty$ when $f$ or $g$ is not absolutely continuous.
\end{lemma}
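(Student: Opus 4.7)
The plan is to exploit the variational definition $I^{\mathcal{X}}_{[0,T]}(f,g)=\sup_{d,\bs t}I_{\bs t}(\cdot)$ from~\eqref{defI}: to show $I^{\mathcal X}=+\infty$ it suffices, for every level $L<\infty$, to exhibit a mesh $\bs t$ and a choice of test weights $(\bs\omega,\bs\theta)$ in the Legendre supremum defining $I_{\bs t}$ driving it above $L$. The two cases (non-absolutely-continuous $f$ and non-absolutely-continuous $g$) are essentially symmetric, and I describe the $f$ case in detail.

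First I would invoke the definition of non-absolute continuity: there exists $\epsilon>0$ such that, for every $\delta>0$, there are disjoint sub-intervals $[a_i,b_i]\subseteq[0,T]$, $i=1,\ldots,m$, with $\sum_i(b_i-a_i)<\delta$ and $\sum_i|f(b_i)-f(a_i)|\geqslant\epsilon$. Fix a tuning parameter $M>0$, take the mesh $\bs t$ to consist of $0$, $T$ and all endpoints $a_i,b_i$, and set $\bs\theta\equiv\bs 0$ together with $\omega_j=-s_iM$ at $t_j=a_i$, $\omega_j=+s_iM$ at $t_j=b_i$, and $\omega_j=0$ elsewhere, where $s_i:=\operatorname{sign}(f(b_i)-f(a_i))$. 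The linear-in-$f$ term then satisfies
\[
\sum_j\omega_jf(t_j)=M\sum_i|f(b_i)-f(a_i)|\geqslant M\epsilon.
\]

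The crux is bounding the mgf contributions $f_0\log M^-_{\bs t}(\bs\omega,\bs 0)+\log M^+_{\bs t}(\bs\omega,\bs 0)$. With $\bs\theta\equiv\bs 0$ we have $\varphi\equiv 1$, so the expressions in Lemmas~\ref{L1}--\ref{L2} simplify dramatically. By the telescoping structure $-s_iM$ at $a_i$ followed by $+s_iM$ at $b_i$, the partial sums $\Omega_k$ vanish outside $\bigcup_i[a_i,b_i]$ and satisfy $|\Omega_k|\leqslant M$ inside. Combining this with the uniform bound $h^\circ\leqslant C$ and the absolute continuity of the sojourn-time distribution function $H$ (which follows from $H$ having density $h$, forcing $H\bigl(\bigcup_i[a_i-\tau,b_i-\tau]\bigr)\to 0$ uniformly in $\tau$ as $\delta\downarrow 0$), direct estimation of both mgfs yields bounds of the form $\log M^\pm_{\bs t}(\bs\omega,\bs 0)\leqslant C'\eta(\delta)e^M$, where $\eta(\delta)\downarrow 0$ as $\delta\downarrow 0$. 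Consequently $I_{\bs t}\geqslant M\epsilon-C''\eta(\delta)e^M$, and choosing $\delta=\delta(M)$ small enough that $\eta(\delta)e^M\leqslant 1$ yields $I_{\bs t}\geqslant M\epsilon-C''$. Since $M$ is arbitrary, $I^{\mathcal X}_{[0,T]}(f,g)=\infty$.

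For non-absolutely-continuous $g$ I would use a symmetric construction, taking $\bs\omega\equiv\bs 0$ and placing $\theta_j$-bumps (of magnitude $M$) on short intervals where $g$ exhibits large oscillation, arranged so that $\Theta_k$ telescopes to zero outside the small support set and so that $\sum_j\theta_jg(t_j)\geqslant M\epsilon$. The light-tailedness assumption on claim sizes delivers $|\log\varphi(\theta)|\leqslant C|\theta|$ on a neighbourhood of the origin; combined with the telescoping of $\Theta_k$ this gives the analogous bound on $\log M^\pm_{\bs t}(\bs 0,\bs\theta)$. The main technical obstacle lies in the $M^+_{\bs t}$ estimate from Lemma~\ref{L2}, which is a nested double sum over arrival and departure intervals; the crucial observation is that in both sums the contributions carrying a large $e^M$ (or $\varphi(M)$) factor are multiplied by a probability that the arrival or departure time lands in the small union of intervals, i.e., by a quantity that vanishes with $\delta$, exactly as in the exponential tightness argument of Lemmas~\ref{LEM1}--\ref{LEM2}.
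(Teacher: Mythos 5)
Your proposal follows essentially the same route as the paper's proof: invoke non-absolute continuity to produce a union of sub-intervals of vanishing total length but with variation bounded below by a fixed $\epsilon$; place opposite-signed bumps of magnitude $M$ at their endpoints so that the linear term $\sum_j\omega_j f(t_j)$ (or $\sum_j\theta_j g(t_j)$) scales like $M\epsilon$; observe that by the telescoping of the partial sums the mgf contributions are concentrated on the vanishing union of intervals and hence tend to zero; then send $M\to\infty$. The only substantive difference is in how you control $\log M^+_{\bs t}$: you appeal to absolute continuity of the sojourn-time law (uniform smallness of $H$ over shifted sets of small measure), whereas the paper dominates the departure stream by a stationary rate-$\lambda$ Poisson process and applies H\"older to decouple arrivals from departures, yielding the explicit bound \eqref{eq:Mt2}; both routes work. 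Two small imprecisions worth flagging: your stated bound $C'\eta(\delta)e^M$ should really carry $e^{2M}$ because both an arrival and a departure can fall in the small set (this matches the $(e^{2\alpha}-1)$ in \eqref{eq:Mt2}), and the estimate "$|\log\varphi(\theta)|\leqslant C|\theta|$ near the origin" is not the relevant one for the $g$ case, since the telescoped sums $\Theta_k$ equal $0$ or $\pm M$, not values near $0$; what is actually used is that $\varphi(\Theta_k)^{\delta_k}\to 1$ as $\delta_k\downarrow 0$ for fixed $\Theta_k$ in the finiteness domain of $\varphi$. Neither affects the structure of the argument, which matches the paper's.
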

\begin{proof}
Recall that 
\[
I_{\bs t}(\bs f,\bs g)=\sup_{\bs\omega, \bs\theta} \left(  \sum_{j=1}^d \omega_j f_j+
\sum^d_{j=1} \theta_j g_j - f_0 \log M^-_{\bs t}(\bs\omega, \bs\theta) - \log M^+_{\bs t}(\bs\omega, \bs\theta) \right). 
\]
It thus suffices to show that if $f$ or $g$ are not absolutely continuous then there exist sequences $\{\bs t^n \}$, $\{\bs \omega^n\}$, and $\{\bs \theta^n\}$  such that $I_{\bs t^n}(\bs f^n,\bs g^n)\to \infty$, where $f^n_j=f(t^n_j)$ and $g^n_j=g(t^n_j)$.

We start with the case  that $f$ is not absolutely continuous. This means that there exists $\delta>0$ and $\{s_1^n<u^n_1 \leqslant \dots \leqslant s^n_{k_n} < u^n_{k_n}\}$ such that $\sum_{\ell=1}^{k_n}(u^n_\ell - s^n_{\ell}) \to 0$, while $\sum_{\ell}^{k_n} | f(u_\ell^n)-f(s_\ell^n)| \geqslant \delta$.
Let $\bs t^n=(t^n_i)_{i\in\{1,\ldots, 2k_n\}}$ where, for $\ell\in\{1,\ldots,k_n\}$, we have $t_{2\ell-1}=s^n_\ell$ and $t_{2\ell}=u^n_\ell$. 
In addition, let $\bs \omega^n = (\omega^n_i)_{1\leqslant i \leqslant 2k_n}$ where, for $\ell\in\{1,\ldots,k_n\}$, we have 
\[
\omega_{2\ell-1}=\alpha(1-2{\bs 1}\{f(t_{2\ell})\geqslant f(t_{2 \ell-1})\} \quad \mbox{and}  \quad \omega_{2\ell}=\alpha(2{\bs 1}\{f(t_{2\ell})\geqslant f(t_{2 \ell-1})\}-1),
\]
and $\bs \theta^n=\bs 0$.
Then 
\[
\sum_{j=1}^{2k_n} \omega^n_j f^n_j = \alpha \sum_{\ell=1}^{k_n} |f(u^n_\ell)-f(s^n_\ell)| \geqslant \alpha \delta.
\]
Since $\alpha$ is an arbitrary constant the result is proved if we can show that
\[
f_0 \log M^-_{\bs t^n}(\bs \omega^n, \bs 0) + \log M^+_{\bs t^n}(\bs \omega^n, \bs 0) \to 0, \quad \forall \, \alpha >0. 
\]
Due to the particular choice of $\bs \omega^n$, $M^-_{\bs t^n}(\bs \omega^n, \bs 0)$ and $M^+_{\bs t^n}(\bs \omega^n, \bs 0)$ only capture changes in the client population size during $\cup_{\ell=1}^{k_n} [u^n_{\ell}, s^n_{\ell})$. 
Since this interval is vanishing many of the arguments used to establish exponential tightness carry over (in this case Lemma \ref{LEM1} specifically), and hence we will be brief with our explanations.
In particular, we have
\begin{align}
    M^-_{\bs t^n}(\bs \omega^n, \bs 0) &\leqslant 1+e^{\alpha}\mathbb{P}\left(\tau^\circ \in \bigcup_{\ell=1}^{k_n} [u^n_{\ell}, s^n_{\ell})\right) \to 1  \label{eq:Mt1} \\
    M^+_{\bs t^n}(\bs \omega^n, \bs 0) &\leqslant \exp\left\{ \lambda \sum_{\ell=1}^{k_n} (u^n_\ell-s^n_\ell)(e^{2\alpha}-1) \right\} \to 1,\label{eq:Mt2}
\end{align}
where $\tau^\circ$ is a random variable with density $h^\circ(\cdot)$; the convergence in \eqref{eq:Mt1} follows from the existence of a density {(recall the setup in Section \ref{Sec:model})},
whereas {the inequality in \eqref{eq:Mt2} follows from the observation that when clients arrive according to a Poisson process with rate $\lambda$ then their departures are dominated by a Poisson process with rate $\lambda$ (recall the explanation after \eqref{PPP}) in combination with H\"{o}lder's inequality.}

When $g$ is not absolutely continuous the arguments are similar (i.e., we let $\bs \theta^n$ play the role of $\bs \omega^n$ above), but to establish
\[
f_0 \log M^-_{\bs t^n}(\bs 0, \bs \theta^n) + \log M^+_{\bs t^n}(\bs 0, \bs \theta^n) \to 0, \quad \forall \, \alpha >0
\]
we now follow the same line of reasoning that led to Lemma \ref{LEM2}.
\end{proof}

\subsubsection{Alternative expression for the action functional}\label{Sec:AF}

In this subsection we provide an alternative expression for the action functional $I_{[0,T]}(f,g)$, which may be attractive for computational purposes. 
The main idea is that we decompose the action functional based on the observation that $F_n(\cdot)$ modulates $G_n(\cdot)$ and as such evolves independently. This informally means that we can write $I_{[0,T]}(f,g)$ as the action functional describing the cost of $\bar F_n(\cdot)$ being close to $f(\cdot)$, increased by the the action functional describing the cost of $\bar G_n(\cdot)$ being close to $g(\cdot)$ conditional on $\bar F_n(\cdot)$ being close to $f(\cdot)$. Below we provide expressions for both components featuring in this decomposition. The same type of decomposition has appeared, in different contexts, in for instance \cite{GANG,LIP}.

We start by evaluating the action functional of $\bar F_n(\cdot)$ for the path $f(\cdot)$.
Note that, in passing, we established a `marginal LDP' for the client-population size  only (i.e., not including the net aggregate claim process). From the joint LDP of the client-population-size process and the net aggregate claim process, we find that the corresponding rate function reads
\begin{align*}
 I _{[0,T]}(f):=\sup_{\omega(\cdot)} \bigg\{
 \int_0^T\omega(s) f(s)\,{\rm d}s\:
-&\:f_0\log\left(\int_0^T h^\circ(u)\,e^{\Omega(u)}\,{\rm d}u +\bar h^\circ(T)\,e^{\Omega(T)}\right)\\
 \:
-&\:\lambda\int_0^T\left(
 \int_s^T h(r-s) \frac{e^{\Omega(r)}}{e^{\Omega(s)}}{\rm d}r
 +\bar h(T-s) \frac{e^{\Omega(T)}}{e^{\Omega(s)}} -1
 \right){\rm d}s
    \bigg\}.
\end{align*}
A complication of this optimization problem is that the argument $\omega(\cdot)$ also appears as its integrated version $\Omega(\cdot)$. However, by applying integration by parts, $\omega(\cdot)$ can be eliminated from this variational problem, so that it is written in terms  $\Omega(\cdot)$ only. Indeed, an equivalent variational problem is
\begin{align*}
 I _{[0,T]}(f)=\sup_{\Omega(\cdot)} \bigg\{
 \Omega(T) f(T)&-
 \int_0^T\Omega(s) f'(s)\,{\rm d}s-f_0\log\left(\int_0^T h^\circ(u)\,e^{\Omega(u)}\,{\rm d}u +\bar h^\circ(T)\,e^{\Omega(T)}\right)\\
&- \lambda\int_0^T\left(
 \int_s^T h(r-s) \frac{e^{\Omega(r)}}{e^{\Omega(s)}}{\rm d}r
 +\bar h(T-s) \frac{e^{\Omega(T)}}{e^{\Omega(s)}} -1
 \right){\rm d}s
    \bigg\},
\end{align*}
with $\Omega(0)=0$.
For ease rewriting $z(s):=\exp( \Omega(s))$, this further reduces to
\begin{align}\nonumber
 I _{[0,T]}(f)=\sup_{z(\cdot)} \bigg\{
 &\log z(T) f(T)-
 \int_0^T\log z(s) f'(s)\,{\rm d}s-f_0\log\left(\int_0^T h^\circ(u)\,z(u)\,{\rm d}u +\bar h^\circ(T)\,z(T)\right)\\
 &- \lambda\int_0^T\left(
 \int_s^T h(r-s) \frac{z(r)}{z(s)}{\rm d}r
 +\bar h(T-s) \frac{z(T)}{z(s)} -1
 \right){\rm d}s
    \bigg\}. 
    \label{VP}
\end{align}
%

Conditional on the path $f(\cdot)$ describing the evolution of the client-population size, we now focus on the action functional of the reserve process $G_n(\cdot)$. Given that $\bar F_n(\cdot)$ is close to $f(\cdot)$, a path $g(\cdot)$ of the reserve process has, between $0$ and $T$, rate function
\[I_{[0,T]}(g\,|\,f) = \int_0^T K_{f(s)}(g'(s))\,{\rm d}s,\:\:\:\:\mbox{where}\:\:\:
K_x(u):= \sup_\theta(\theta u -x\,\varphi(\theta));\]
this relation can be considered as a version of Mogulskii's theorem corresponding to the setting of a random walk of which the increments have a deterministically time-varying distribution. (Informally, the rationale behind the expression for $I_{[0,T]}(g\,|\,f)$ is that, by `locally applying Cram\'er's theorem', it equals
\[\lim_{\Delta\downarrow 0} \Delta \sum_{i=0}^{T/\Delta} \sup_\theta\left(\theta g'(i\Delta)-\log e^{\theta r f(i\Delta)}
-\log \left(\sum_{k=0}^\infty e^{-\nu f(i\Delta)}\frac{(\nu f(i\Delta))^k}{k!}\big(\beta(\theta)\big)^k\right)\right);\]
evaluating this Riemann sum yields the expression for $I_{[0,T]}(g\,|\,f)$ that was postulated above.)
Then $I _{[0,T]}(f,g)$ can be computed through the relation
\[
I _{[0,T]}(f,g) =  I _{[0,T]}(f)+ I_{[0,T]}(g\,|\,f).
\] 

\bibliographystyle{plain}
{\small }

\appendix
\section{Computational techniques}\label{Sec:Comp}
{In this appendix we describe a numerical method to solve the variational problem described by Equations \eqref{VPA1} and \eqref{VPA2}. Recall that our goal is to find the most likely path in the set 
\[
\mathscr{H}_t= \{(f,g):(f(t),g(t)) \in \mathscr{R} \}, \quad \text{where} \quad \mathscr{R}=[0,\infty)\times B,
\]
i.e., the path $(f\s, g\s)$ such that $I_{[0,T]}(f\s,g\s)=\varrho(t):=\inf_{f,g \in \mathscr{H}_t} I_{[0,T]}(f,g)$.
To this end we write $f\s \equiv f\s(t)$ and $g\s \equiv g\s(t)$, and find $\omega\s$ and $\theta\s$ such that 
\[
\varrho(t) = I_t(f\s, g\s) = \omega\s f\s + \theta\s g\s - N_t(\theta\s,\omega\s),
\]
where $N_t(\omega,\theta):=f_0 \log M_t^- + \log M^+_t(\omega, \theta)$, $I_t(\cdot, \cdot)$ is the rate function of the one-point LDP given in Proposition \ref{P1}, and $\omega\s$ and $\theta\s$ are the optimising values of $\omega$ and $\theta$.
We will argue that this computation can be used as the basis for an efficient technique that yields {\it the full most likely path}; cf.\ the results for most likely workload paths in queues fed by many iid sources, as developed in e.g.\ \cite{WIS}.}

{Fix $s \in [0,t]$. By the contraction principle, applying the bivariate LDP, we have
\[
I_t(f\s,g\s) = \inf_{f \geq 0, g \in \mathbb{R}} I_{s,t}((f,f\s),(g,g\s)).
\]
We wish to identify the optimising $f$ and $g$ in the right-hand-side, which can be interpreted as $f\s(s)$ and $g\s(s)$. 
The optimising arguments in the definition of $I_{s,t} ((f,f\s),(g,g\s))$ are $((0,\omega\s),(0,\theta\s))$:
\[I_{s,t} \big((f\s(s),f\s),(g\s(s),g\s)\big)=  0\cdot f\s(s)+
\omega\s f\s+0\cdot g\s(s) + \theta\s g\s-N_{s,t}\big((0,\omega\s),(0,\theta\s)\big) ,\]
with $N_{s,t}({\bs \omega},{\bs \theta}):=f_0 \log M_{s,t}^-({\bs \omega},{\bs \theta})+\log M_{s,t}^+({\bs \omega},{\bs \theta})$.
As a consequence,
\begin{align*}g\s(s) &= \left.\frac{\partial}{\partial \theta_1}N_{s,t\s}({\bs \omega},{\bs \theta})\right|_{({\bs \omega},{\bs \theta})=
((0,\omega\s),(0,\theta\s))
},\\
f\s(s) &= \left.\frac{\partial}{\partial \omega_1}N_{s,t\s}({\bs \omega},{\bs \theta})\right|_{({\bs \omega},{\bs \theta})=
((0,\omega\s),(0,\theta\s))
}.
\end{align*}
As we can do this for any $s$, we have found a way to evaluate the full most likely path. 
}

\section{The limiting value of $E_1(a,T)$}\label{app1}

In this appendix we present the calculations that lead to \eqref{ECT}. As indicated in the main text, we consider the case that clients remain at the insurance firm for an exponentially distributed length of time with mean $1/\mu$.
Note that, due to the exponential sojourn times, $(F_n(t), G_n(t))_{t \in [0,T]}$ is a Markov process.
With
\[I_{t_1,t_2}((f_1,f_2),{\mathbb R}^2):=
\inf_{(g_1,g_2)\in{\mathbb R}^2} I_{t_1,t_2}((f_1,f_2),(g_1,g_2)),\]
the rate associated to the client-population-size process is
\[
I_{[0,T]}(f^{(\star, T)}) = \sum_{i=1}^{T/\Delta} I_{\Delta i, \Delta(i+1)} ( (f^{(\star, T)}(\Delta i), f^{(\star, T)}(\Delta (i+1))),\mathbb{R}^2)
\]
for any $0< \Delta< T$ where we have applied to the contraction principle to obtain equality with the rate of the finite-dimensional LDP, and the Markov property to decompose the rate function of the finite-dimensional LDP into a sum.
{Let $\varepsilon =a - \bar g(T)$. For ease of exposition we will tacitly assume that $a < \bar g(T)$, i.e., there is an unusually large surplus at time $T$.}
Let $t=\Delta i$ and ${\rm d}t=\Delta$.
The additional clients that can be attributed to the interval $[t, t+{\rm d}t)$ are
\[
{\rm d}t\, a(t) := f^{(\star, T)}(t+{\rm d}t) - \bar f^{(\star, T)}(t+{\rm d}t),
\]
where $\bar f^{(\star, T)}(t+{\rm d}t)$ is the expected client-population size at time $t+{\rm d}t$ given the client population is $f^{(\star,T)}(t)$ at time $t$.
{The expected total capital generated by each additional client that arrived in $[t,t+{\rm d}t)$ by time $T$ (in the conditioned process) is approximately
\[
\int^{T-t}_0 e^{-\mu x}(r - \nu \bar m)\,{\rm d}t = \frac{r-
\nu \bar m}{\mu}(1-e^{-\mu(T-t)}),
\]
where this approximation holds for small ${\rm d}t$ and $\varepsilon$, and uses the fact that for $\varepsilon$ small, clients in the conditioned process generate claims in a similar manner as in the unconditioned process.
Consequently, the total capital that can be attributed to the additional clients that arrived in the interval $[t,t+{\rm d}t)$ is approximately
\begin{equation}\label{CCO1}
    {\rm d}t \,c(t):= {\rm d}t \,a(t) \frac{r-
\nu \bar m}{\mu}(1-e^{-\mu(T-t)}).
\end{equation}}
The share of the total rate $I_{[0,T]}(f^{(\star,T)},g^{(\star,T)})$ that can be attributed to these additional clients is
\begin{equation}\label{CCO2}
\begin{aligned}
I_{t, t+{\rm d}t} ( (f^{(\star, T)}(t), f^{(\star, T)}(t+{\rm d}t)),\mathbb{R}^2) &\approx I_{t, t+{\rm d}t} ( (\bar f(t), \bar f(t+{\rm d}t)+{\rm d}t\, a(t) ),\mathbb{R}^2) \\
&\approx ({\rm d}t)^2 a(t)^2  \left. \frac{\partial^2}{\partial y^2} I_{(t,t+{\rm d}t)}((\bar f(x), y),\mathbb{R}^2)\right|_{y=\bar f(x+{\rm d}t)} \\
&\approx \frac{{\rm d}t\, a(t)^2}{\lambda + \bar f(t)\mu},
\end{aligned}
\end{equation}
{where the first step requires $\varepsilon$ to be small, the second step follows from a Taylor expansion and requires ${\rm d}t$ to be small, and the final step follows from the fact that the second derivative of the Legendre transform evaluated at its mean is the reciprocal of the variance of the underlying random variable.}
In view of \eqref{CCO1} and \eqref{CCO2}, the marginal increase in rate per unit capital corresponding to increasing or decreasing the additional clients that arrive in $[t, t+{\rm d}t)$ is
\begin{align}
\begin{split}\label{eq:OR1}
    \frac{{\rm d}}{{\rm d}c(t)} \left( \frac{ a(t)^2}{\lambda + \bar f(t)\mu} \right) &= \frac{{\rm d}}{{\rm d}c(t)} \left( \frac{c(t)^2}{(\lambda + \bar f(t)\mu)\left[\frac{r-
\nu \bar m}{\mu}(1-e^{-\mu(T-t)}) \right]^2 } \right) \\
&= \frac{2c(t) }{(\lambda + \bar f(t)\mu)\left[\frac{r-
\nu \bar m}{\mu}(1-e^{-\mu(T-t)}) \right]^2 }.
\end{split}
\end{align}


Now observe that the additional capital that can be attributed to clients generating fewer total claims than expected is 
\[
b := g^{(\star, T)}(T) - (r - \nu \bar m) \int^T_0 f^{(\star, T)}(t) {\rm d}t,
\]
where we recall that $g^{(\star, T)}(T)=\bar g(T)+\varepsilon$.
The rate associated with these reduced total claims is 
\begin{align}
    I_{[0,T]}(g^{(\star,T)}|f^{(\star,T)}) &= \bar K_{f^{(\star,T)}}(g^{(\star, T)}(T)) \label{eq:tc1} \\
    &\approx \bar K_{\bar f}(\bar g(T)+b),\label{eq:tc2}
\end{align}
where 
\begin{equation}\label{eq:tc3}
    \bar K_{f}(x) = \sup_\theta (\theta x - \gamma(\theta)), \quad \text{with} \quad     \gamma(\theta)= \exp\left(\nu \int^T_0 f(t) {\rm d}t \,(\beta(\theta) -1)\right).
\end{equation}
Note that \eqref{eq:tc1} follows by the contraction principle, while {\eqref{eq:tc2} uses $f^{(\star,T)}(\cdot)\approx \bar f(\cdot)$ for $\varepsilon$ small}. In addition, \eqref{eq:tc3} follows from that fact that given the client population $f$ the total number of claims is Poisson with {mean} $\nu \int_{0}^Tf(t)\,{\rm d}t$. For $\varepsilon$ small we then have 
\begin{equation}\label{CCO3}
\bar K_{\bar f}(\bar g(T)+b) \approx \frac{b^2}{ \beta''(0)  \nu \int^{T}_0 \bar f(t) \,{\rm d}t},
\end{equation}
{where we again apply a Taylor expansion, and use the fact that the second derivative of a Legendre transform is the reciprocal of the variance of the underlying random variable.}
{In view of \eqref{CCO3}, the} marginal increase in rate per unit capital corresponding to increasing or decreasing $b$ is
\begin{equation}\label{eq:OR2}
\frac{{\rm d}}{{\rm d}b} \left( \frac{b^2}{ \beta''(0)  \nu \int^{T}_0 \bar f(t) \,{\rm d}t} \right) = \frac{2b}{ \beta''(0)  \nu \int^{T}_0 \bar f(t)\, {\rm d}t}
\end{equation}

By the optimality of $(f^{(\star,T)},g^{(\star,T)})$, and Equations \eqref{eq:OR1} and \eqref{eq:OR2} we have 
\[
\frac{2 c(t) }{(\lambda + \bar f(t)\mu)\left[\frac{r-
\nu \bar m}{\mu}(1-e^{-\mu(T-t)}) \right]^2 } = \frac{2b}{ \beta''(0)  \nu \int^{T}_0 \bar f(t)\, {\rm d}t}, \quad \text{for all } t \in [0,T],
\]
so that 
\[
c(t) = \frac{b(\lambda + \bar f(t)\mu)\left[\frac{r-
\nu \bar m}{\mu}(1-e^{-\mu(T-t)}) \right]^2}{\beta''(0)  \nu \int^{T}_0 \bar f(t)\, {\rm d}t}, \quad \text{for all } t \in [0,T].
\]
Because the total additional capital must be $\varepsilon$, we have 
\begin{equation*}
    \varepsilon = b+ \int_0^T c(t)\, {\rm d}t ,
\end{equation*}
so that 
\[
\lim_{\varepsilon \downarrow 0}E_1(a,T) = \frac{\int^T_0 c(t) \,{\rm d}t}{b+ \int_0^T c(t)\, {\rm d}t} = \frac{\int^T_0(\lambda + \bar f(t)\mu)\left[\frac{r-
\nu \bar m}{\mu}(1-e^{-\mu(T-t)}) \right]^2 {\rm d}t }{ \beta''(0)  \nu \int^{T}_0 \bar f(t) \,{\rm d}t+\int^T_0(\lambda + \bar f(t)\mu)\left[\frac{r-
\nu \bar m}{\mu}(1-e^{-\mu(T-t)}) \right]^2 {\rm d}t } ,
\]
where we observe that right-hand side equals \eqref{ECT}, as desired.



\end{document}